\documentclass{imanum}
\usepackage{graphicx}
\usepackage{amsmath,multirow}
\usepackage{amssymb}
\usepackage{amscd}
\usepackage{amsfonts}
\usepackage{enumerate}
\usepackage{mathrsfs}
\usepackage{xypic}
\usepackage{stmaryrd}
\usepackage{graphicx}
\usepackage{fancybox}
\usepackage{color}
\usepackage{amsmath}
\usepackage{exscale}
\usepackage{enumitem,array}%
\usepackage{amscd}
\usepackage{amsfonts}
\usepackage{enumerate}
\usepackage{mathrsfs}
\usepackage{xy}
\usepackage{stmaryrd}
\usepackage{graphicx}

\usepackage{color}

\usepackage{exscale}
\usepackage{enumitem,array}%
\usepackage{subfigure}
\usepackage{extarrows}
\usepackage{amsfonts}
\usepackage{xypic}
\usepackage{float}

\jno{drnxxx}

\usepackage{amsmath}
\usepackage{amssymb}
\usepackage{amscd}
\usepackage{amsfonts}
\usepackage{enumerate}
\usepackage{mathrsfs}
\usepackage{xy}
\usepackage{stmaryrd}
\usepackage{graphicx}
\usepackage{fancybox}
\usepackage{color}
\usepackage{multirow}
\usepackage{exscale}
\usepackage{enumitem,array}%
\usepackage{subfigure}
\usepackage{extarrows}
\usepackage{amsfonts}
\usepackage{xypic}

\usepackage{algorithm}
\usepackage{algpseudocode} 
\algrenewcommand\algorithmicrequire{\textbf{Input:}}
\algrenewcommand\algorithmicensure{\textbf{Output:}}

\algdef{SE}[FOR]{MyFor}{MyEndFor}[1]{\algorithmicfor\ #1\ \algorithmicdo}{\textbf{end for}}

\algdef{SE}[WHILE]{MyWhile}{MyEndWhile}[1]
{\algorithmicwhile\ #1\ \algorithmicdo}
{\textbf{end while}}


\usepackage{tikz}
\usepackage{epstopdf}

\def\pdt2{\partial_t^2}
\def\pdx2{\partial_x^2}

\newcommand{\absmm}[1]{{\left\vert\kern-0.25ex\left\vert\kern-0.25ex\left\vert #1
    \right\vert\kern-0.25ex\right\vert\kern-0.25ex\right\vert}}

\def\Im{\mathrm{Im}}

\def\pdt2{\partial_t^2}
\def\pdx2{\partial_x^2}

\newcommand{\abs}[1]{\left\vert#1\right\vert}

\def\Im{\mathrm{Im}}

\def\Im{{\mathrm{Im}\,}}

\newtheorem{thm}{Theorem}[section]
\newtheorem{mytheo}{Theorem}[section]

\newtheorem{lem}[thm]{Lemma}

\newtheorem{algodef}[thm]{Algorithm}

\newtheorem{rem}[thm]{Remark}
\begin{document}

\title{A  filtered two-step variational integrator for charged-particle dynamics in a  moderate or strong  magnetic field}
\shorttitle{A filtered  variational integrator for CPD}


\author{%
	{\sc Ting Li\thanks{Email: 1009587520@qq.com}} \\[2pt]
	School of Mathematics and Statistics, Xi'an Jiaotong University, 710049 Xi'an, China\\Mathematisches Institut, University of T\"{u}bingen, Auf der Morgenstelle 10, 72076 T\"{u}bingen, Germany\\[6pt]
	{\sc and}\\[6pt]
	{\sc  Bin Wang}\thanks{Corresponding author. Email: wangbinmaths@xjtu.edu.cn}\\[2pt]
	School of Mathematics and Statistics, Xi'an Jiaotong University, 710049 Xi'an, China}
\shortauthorlist{T. Li and B. Wang}

\maketitle

\begin{abstract}
{This article is concerned with a new  filtered two-step variational integrator for solving the charged-particle dynamics in a mildly non-uniform moderate or strong magnetic field with a dimensionless parameter $\varepsilon$ inversely
proportional to the strength of the magnetic field. In the case of a moderate magnetic field ($\varepsilon=1$), second-order error bounds and long-time near-conservation of energy and momentum are obtained. Moreover, the proof of the long-term analysis is accomplished by the backward error analysis. For $0<\varepsilon \ll 1$, the proposed integrator achieves uniform second-order accuracy in the position and the parallel velocity for large step sizes, while attaining first-order accuracy with respect to the small parameter $\varepsilon$ for smaller step sizes. The error bounds are derived from a comparison of the modulated Fourier expansions of the exact and numerical solutions. Moreover, long-time near-conservation of the energy and the magnetic moment is established using modulated Fourier expansion and backward error analysis. All the theoretical results of the error behavior and long-time near-conservation are numerically demonstrated by four numerical experiments.}
 {Charged particle dynamics, two-step filtered variational integrator, backward error analysis, modulated Fourier expansion, convergence, long-time behavior}
\end{abstract}

\section{Introduction}\label{intro}
The dynamics of charged particles in electromagnetic fields play a fundamental role in extreme cosmic environments, and their comprehensive study is of both theoretical and practical significance (\cite{Arnold, Northrop}).  This paper focuses on the time integration of  the equations governing  a particle's motion in a mildly non-uniform moderate or strong  magnetic field. The particle motion can be described by the differential equation (\cite{Lubich2022})
\begin{equation}\label{charged-particle}
	\begin{aligned}
		&\ddot{x}(t)=\dot{x}(t) \times  B(x(t)) +F(x(t)),\ \ t\in[0,T]\ \
		  \textmd{with} \ \  B(x)=\frac{1}{\varepsilon}B_0+B_1(x),
	\end{aligned}
\end{equation}
where  $x(t)\in \mathbb{R}^3$ represents the  position of a particle moving
in an electromagnetic field, $B_0$ is a fixed vector in $\mathbb{R}^3$ with the uniform bound $\abs{B_0}=1$,  with $\abs{\cdot}$ denoting the Euclidean norm, $B_1(x): \mathbb{R}^3\rightarrow \mathbb{R}^3$ is a non-constant magnetic
field which is assumed to have a known vector potential $ A_1(x) $
and the nonlinear function $F(x): \mathbb{R}^3\rightarrow \mathbb{R}^3$ describes the electric field. We assume that $B_1(x)$ and $F(x)$ are  smooth and uniformly bounded w.r.t. $\varepsilon$. It is noted that the magnetic field $B(x)$ can be expressed as $B(x)=\nabla_x \times A(x)$ by  the  vector potential $A(x)=-\frac{1}{2}x \times \frac{B_0}{\varepsilon}+ A_1(x)$ and its  strength is determined  by the parameter $1/\varepsilon$.
The initial position and velocity are assumed to have an $\varepsilon$-independent bound
\begin{equation}\label{velocity}
	\abs{x(0)} \leq \hat{C},\quad \abs{\dot{x}(0)} \leq \hat{C}.
\end{equation}

In addition to the mildly non-uniform strong magnetic field \eqref{charged-particle} described above,  the analysis can also be extended to the so-called maximal ordering scaling case (\cite{Brizard,Lubich2020}), where  
$
B(x) =B(\varepsilon x)/\varepsilon.
$
This representation can be decomposed as  
$
B(x) =B(\varepsilon x_0)/\varepsilon + (B(\varepsilon x) - B(\varepsilon x_0))/\varepsilon,
$
in which the second term remains uniformly bounded with respect to $\varepsilon$.  Consequently, it can be regarded as the  component $B_1(x)$ in \eqref{charged-particle}, 
so that the framework developed in this paper naturally extends to this scaling as well. By contrast, for a general non-uniform strong magnetic field of the form $B(x)=\hat B(x)/\varepsilon$ with $\hat B(x)$ independent of $\varepsilon$, the derivatives of the magnetic field are no longer uniformly bounded with respect to $\varepsilon$. This contrasts with the mildly non-uniform and maximal ordering cases considered above. Accordingly, a different analytical approach is required, which will be pursued in future work.

The charged-particle dynamics (CPD) \eqref{charged-particle}  possesses  many important properties. If the nonlinear function  has a scalar potential, i.e., $F(x)=-\nabla_{x}U(x)$ with scalar potential $U(x)$, the CPD \eqref{charged-particle} is an Euler--Lagrange equation for the Lagrange function
\begin{equation}\label{L(x,v)}
	L(x,\dot{x})=\frac{1}{2}\abs{\dot{x}}^2+A(x)^{\intercal}\dot{x}-U(x).
	\end{equation}  Moreover, the system has conservation laws. The energy $H(x,v)=\abs{v}^2/2+U(x)$ is exactly conserved  along the solution of  \eqref{charged-particle}, where $v:=\dot{x}$.
For moderate magnetic fields ($\varepsilon=1$), if the scalar and vector potentials possess certain invariance properties, then the Lagrangian \eqref{L(x,v)} satisfies the relation $L(e^{\tau S}x,e^{\tau S}v)=L(x,v)$ with a skew-symmetric matrix $S$. By Noether’s theorem, this implies the conservation of the momentum $M(x,v) = (v + A(x))^{\intercal} Sx,$
which is identified in \cite{Hairer2017-2,Hairer2017-1} as an invariant of the charged-particle dynamics \eqref{charged-particle}.
Meanwhile, for the strong regime ($0< \varepsilon \ll 1$), from  \cite{Northrop,Benettin1994,Arnold}, we know that the magnetic moment $I(x,v)=\frac{1}{2\varepsilon}\frac{\abs{v \times  B(x) }^2}{\abs{B(x)}^3}$
 is an adiabatic invariant of \eqref{charged-particle}.

In this paper, we explore two different regimes for the parameter
$\varepsilon$.  \textit{In the strong regime, where $0< \varepsilon \ll 1$,}  the solution of equation \eqref{charged-particle} exhibits highly oscillatory behavior, which is significant in applications like plasma physics. Conversely, \textit{in the moderate regime $\varepsilon=1$,} the solution does not display such oscillations, and the system behaves as a typical dynamic system. Various time discretization methods have been extensively developed and researched in recent decades for solving  CPD with a moderate magnetic field. One of the most well-known methods is the Boris method \cite{Boris}, which is widely used for its simplicity and favorable long-time conservation properties. Further theoretical analysis of its properties has been provided by \cite{Qin2013}  and  \cite{Hairer2017-1}. In the recent few decades, structure-preserving methods for differential equations have gained increasing attention. Several structure-preserving methods have been  investigated and applied to CPD. Symmetric methods have been explored by \cite{Ostermann15,Hairer2017-2,wangwu2020}.
Energy/volume-preserving integrators have been proposed by \cite{He2015,Brugnano2019,Brugnano2020,Chacon}.  Furthermore, symplectic and K-symplectic methods have been studied in works such as   \cite{PRL1,Xiao,Webb2014,Tao2016,Zhang2016,He2017,Y. Shi,Xiao21,li2023}.

For  the CPD under  a strong  magnetic field ($0<\varepsilon\ll 1$), numerous numerical methods have been proposed. Generally speaking, the existing methods can be  divided into two  categories. The first focuses on long-time near-conservation of invariants, as studied in \cite{Hairer2018,wangwu2020}.
These methods can have good long-time near-conservation but the accuracy becomes badly when $\varepsilon$ is small. The second category consists of integrators designed to improve accuracy in the strong magnetic field regime. Asymptotic preserving  schemes have been developed  in \cite{VP4,VP5,Chacon} but their errors are not uniformly accurate with respect to   the parameter $\varepsilon$.  Two filtered Boris algorithms were designed   in \cite{Lubich2020} to have uniform   second order accuracy for  CPD under the scaling as $B(x,t)=B_0(\varepsilon x)/\varepsilon+B_1(x,t)$. Three splitting methods with first-order uniform error bound  were proposed  in \cite{WZ}. A recent work \cite{WJ23} developed a class of algorithms with improved accuracy 
for the two dimensional CPD and they become uniformly accurate methods when applied to the three  dimensional system under  the scaling $B=B(\varepsilon x)$.  Some other uniformly accurate methods have been presented in \cite{VP3,CPC,VP2,VP1} for the Vlasov--Poisson system.  Although these methods have nice accuracy when solving CPD under a strong  magnetic field, they usually do not have long-time conservation. In a very recent paper (see, e.g. \cite{Lubich2022}), several large-stepsize integrators were studied for charged-particle dynamics in a mildly strong non-uniform magnetic field, and one scheme was shown to exhibit good accuracy and long-time near-conservation properties for large step sizes. The present work develops a filtered two-step variational integrator and provides a comprehensive analysis covering both moderate and strong magnetic field regimes.  Favorable accuracy and long-time near-conservation properties are established for all considered magnetic field strengths and step-size regimes.

In this paper, a new filtered two-step variational integrator is constructed by introducing two filter functions, thereby achieving favorable accuracy and long-term behavior.
The main contributions of this paper are as follows.
\begin{itemize}
	
	\item A symmetric filtered variational integrator is developed for charged-particle dynamics, together with a comprehensive analysis for both moderate and strong magnetic fields, establishing accuracy and long-time near-conservation properties.

	\item For the moderate magnetic field regime ($\varepsilon=1$), the proposed integrator is shown to achieve second-order accuracy together with long-time near-conservation of energy and momentum. The long-term behavior is rigorously established by backward error analysis.
	
\item For the strong magnetic field regime ($0<\varepsilon\ll1$),  uniform second-order accuracy for large step sizes and first-order accuracy in the small parameter for smaller step sizes are rigorously established, together with long-time near-conservation of the energy and magnetic moment, 
based on modulated Fourier expansion and backward error analysis.
\end{itemize}

The  rest of this article is organized as follows.  In Section~\ref{sec2}, we construct a filtered two-step variational integrator based on a discrete Lagrangian and two filter functions. The main results for mildly non-uniform moderate and strong magnetic fields, along with four numerical experiments, are provided in Section \ref{sec3}.  Section~\ref{sec4} analyzes the error bounds and long-time near-conservation of energy and momentum for a moderate magnetic field.  Moreover, error bounds and a long-time analysis of the energy and magnetic moment for the strong magnetic field regime are presented in Section~\ref{sec5}. The last section is devoted to the conclusions of this paper.

\section{Numerical integrator}\label{sec2}
This section presents the construction of a filtered variational integrator for \eqref{charged-particle}. The method is based on a discrete Lagrangian combined with suitable filter functions, and its formulation proceeds in three steps. Firstly, we introduce a discrete Lagrangian. Applying the discrete Hamilton principle yields the associated discrete Euler–Lagrange equations, which define a variational integrator. As established by Theorem~6.1 in \cite{hairer2006}, such integrators are symplectic and possess favorable long-term energy behavior, which motivates the use of the discrete Lagrangian framework. Secondly, to address the highly oscillatory nature of charged-particle dynamics in a strong magnetic field, appropriate filter functions are incorporated to enhance numerical accuracy and stability. Finally, these filter functions are explicitly given.	 
	 
\begin{itemize}
	\item \textbf{Step 1: Discrete Lagrangian.} 
	Let $x_n \approx x(t_n)$ denote the numerical approximation at $t_n=nh$, with stepsize $h>0$. Based on the Lagrangian function $L(x,\dot{x})$ given in \eqref{L(x,v)},  the discrete Lagrangian is constructed by approximating the action integral
	over each time interval using the midpoint rule (see, e.g., Example~6.3 in Chapter~VI of \cite{hairer2006}). This leads to
	\begin{equation*}\label{ L_h}
		L_{h}(x_n,x_{n+1})=\frac{h}{2}\big(\hat{v}_{n+\frac{1}{2}}\big)^{\intercal}\hat{v}_{n+\frac{1}{2}}
		+hA\big({x}_{n+\frac{1}{2}}\big)^{\intercal}\hat{v}_{n+\frac{1}{2}}-hU({x}_{n+\frac{1}{2}}) \approx \int_{t_n}^{t_{n+1}} L(x(t),\dot{x}(t))dt,
	\end{equation*}
	with the abbreviations $\hat{v}_{n+1/2} := (x_{n+1} - x_n)/h$ 
	and $x_{n+1/2} := (x_n + x_{n+1})/2$, respectively.
	Following the discrete Hamilton principle,
	the associated discrete momenta $p_n$ and $p_{n+1}$,
	corresponding to $p(t)=v(t)+A(x(t))$, are given by
	\begin{equation}\label{Met-p}
		\begin{aligned}
			&p_{n}=-\frac{\partial L_h}{\partial x_n}(x_n,x_{n+1})=\hat{v}_{n+\frac{1}{2}}
			-\frac{h}{2}A'\big({x}_{n+\frac{1}{2}}\big)^{\intercal}\hat{v}_{n+\frac{1}{2}}
			+A\big({x}_{n+\frac{1}{2}}\big)
			-\frac{h}{2}F\big({x}_{n+\frac{1}{2}}\big),\\
			&p_{n+1}=\frac{\partial L_h}{\partial x_{n+1}}(x_n,x_{n+1})=\hat{v}_{n+\frac{1}{2}}
			+\frac{h}{2}A'\big({x}_{n+\frac{1}{2}}\big)^{\intercal}\hat{v}_{n+\frac{1}{2}}
			+A\big({x}_{n+\frac{1}{2}}\big)
			+\frac{h}{2}F\big({x}_{n+\frac{1}{2}}\big),
		\end{aligned}
	\end{equation} 	
	with $A'(x)=(\partial_{j} A_{i}(x))_{i,j=1}^{3}$. By Theorem~6.1 of \cite{hairer2006}, the method \eqref{Met-p} is symplectic.
	Moreover, exchanging $x_n \leftrightarrow x_{n+1}$ and $h \leftrightarrow -h$
	leaves \eqref{Met-p} invariant, and hence the method is symmetric.
	This symmetry is also known to play an important role in favorable long-time behavior  \cite{hairer2006}.
	\item \textbf{Step 2: Filtered modification.} To capture the high oscillations in the velocity more accurately, a modification
	is needed. Reformulating the first equation of \eqref{Met-p} and incorporating a filter matrix $\Psi$ leads to
	$
	\hat{v}_{n+\frac{1}{2}}= \Psi\big(p_{n}+\frac{h}{2} A'\big({x}_{n+\frac{1}{2}}\big)^{\intercal}\hat{v}_{n+\frac{1}{2}} - A\big({x}_{n+\frac{1}{2}}\big)
	+\frac{h}{2} F\big({x}_{n+\frac{1}{2}}\big)\big).
	$
	The form of the filter matrix $\Psi$ will be specified in the subsequent part. Using the notation of $\hat{v}_{n+\frac{1}{2}}$, it follows that 
	\begin{equation}\label{onesx}
		x_{n+1}
		=x_n+h\hat{v}_{n+\frac{1}{2}}
		=x_n+h\Psi\Big(p_n
		+\frac{1}{2}A'\big({x}_{n+\frac{1}{2}}\big)^{\intercal}(x_{n+1}-x_{n})
		-A\big({x}_{n+\frac{1}{2}}\big)	+\frac{h}{2}F\big({x}_{n+\frac{1}{2}}\big)\Big).
	\end{equation}
	With the symmetry property, we arrive at $x_{n-1}
	=x_n-h\Psi\big(p_n
	+\frac{1}{2}A'\big({x}_{n-\frac{1}{2}}\big)^{\intercal}(x_{n-1}-x_{n})
	-A\big({x}_{n-\frac{1}{2}}\big)	-\frac{h}{2}F\big({x}_{n-\frac{1}{2}}\big)\big),$
	where $x_{n - 1/2}:=(x_{n}+x_{n-1})/2$.	Combining these two relations, we obtain a filtered two-step symmetric formulation
	\begin{equation}\label{variational integrator-ori}
		\begin{aligned}
			x_{n+1}-2x_{n}+x_{n-1}
			=&\Psi\Big(\frac{h}{2}A'(x_{n+\frac{1}{2}})^{\intercal}(x_{n+1}-x_{n})
			+\frac{h}{2}A'(x_{n-\frac{1}{2}})^{\intercal}(x_{n}-x_{n-1})-h\big(A(x_{n+\frac{1}{2}})-A(x_{n-\frac{1}{2}})\big)\\
			&+\frac{h^2}{2}\big( F(x_{n+\frac{1}{2}})+F(x_{n-\frac{1}{2}}) \big)\Big).
		\end{aligned}
	\end{equation}
	Together with the position update \eqref{variational integrator-ori}, we introduce the following filtered approximation for the velocity:
	\begin{equation*}\label{}
		\frac{v_{n+1}+v_n}{2}
		=
		\Phi\frac{x_{n+1}-x_n}{h}.
	\end{equation*}
	The choice of the filter matrices $\Psi$ and $\Phi$ will be detailed in the following analysis. 
	\item \textbf{Step 3: Choice of the filter functions.}  We introduce the following filter functions:
\begin{equation*}\label{filtered}
\psi(\zeta)=\operatorname{tanch}(\zeta/ 2)=\frac{\tanh(\zeta/2)}{\zeta/2},\qquad
\phi(\zeta)=\frac{1}{\operatorname{sinch}(\zeta/2)}=\frac{\zeta/2}{\sinh(\zeta/2)},
\end{equation*}
with the understanding that $\psi(0)=\phi(0)=1$. The specific choice of these filter functions is motivated by the theoretical analysis required for the strong magnetic field regime. In particular, Theorem~\ref{error bound 2} in Section~\ref{sec3} demonstrates that $\psi$ is necessary to achieve a second-order error bound in position, while $\phi$ plays a crucial role in the long-time near-conservation of energy and magnetic moment, as established in Theorems~\ref{conservation-H2} and~\ref{conservation-M2}. The detailed analysis is presented in Section \ref{sec5}.
 Subsequently, we define the filtered matrices
\begin{equation}\label{MVF-phi}
\Psi := \psi\Big(-\frac{h}{\varepsilon} \tilde{B}_0\Big)
= I + \Big(1-\operatorname{tanc}\Big(\frac{h}{2 \varepsilon}\Big)\Big){\tilde{B}_0}^2, \
\Phi := \phi\Big(-\frac{h}{\varepsilon} \tilde{B}_0\Big)
= I + \Big(1-\operatorname{sinc}\Big(\frac{h}{2 \varepsilon}\Big)^{-1}\Big){\tilde{B}_0}^2,
\end{equation}
where $\operatorname{tanc}(\zeta)=\tan (\zeta)/ \zeta$ and $\operatorname{sinc}(\zeta)=\sin (\zeta)/ \zeta$. Here, $\tilde{B}_0$ denotes the skew-symmetric matrix defined by $-\tilde{B}_0 v = v\times B_0$ for $v\in\mathbb{R}^3$. The identities above follow from a Rodrigues-type formula (see \cite{Lubich2020}).
\end{itemize}  

For clarity, the filtered two-step variational integrator constructed above is summarized in the following algorithm.
\begin{algodef}\label{defM}
	A filtered two-step variational integrator for
	solving charged-particle dynamics \eqref{charged-particle} is
	defined as
	\begin{equation}\label{Method}
		\begin{aligned}
			x_{n+1}-2x_{n}+x_{n-1}
			&=\Psi\Big(\frac{h}{2}A'(x_{n+\frac{1}{2}})^{\intercal}(x_{n+1}-x_{n})
			+\frac{h}{2}A'(x_{n-\frac{1}{2}})^{\intercal}(x_{n}-x_{n-1})\\
			&\qquad \ \  -h\big(A(x_{n+\frac{1}{2}})-A(x_{n-\frac{1}{2}})\big)
			+\frac{h^2}{2}\big( F(x_{n+\frac{1}{2}})+F(x_{n-\frac{1}{2}}) \big)	\Big),
		\end{aligned}
	\end{equation}
	where $h$ is the chosen stepsize, $x_0=x(0)$ and $x_{n \pm 1/2}:=(x_n + x_{n\pm 1})/2$.
	
An approximation for the velocity $v(t):=\dot{x}(t)$ is formulated as
\begin{equation}\label{AV1}
\frac{v_{n+1}+v_n}{2}
=
\Phi\frac{x_{n+1}-x_n}{h}.
\end{equation}
We shall refer to this algorithm as FVI.
\end{algodef}

For the implementation of FVI, it is noticed that the starting
value $x_1$ is needed. To get this result, letting $n=0$ in \eqref{onesx} yields 
\begin{equation}\label{start_x}
	\begin{aligned}
		x_{1}
		=x_0+h\Psi\Big(p_0
		+\frac{1}{2}A'\big(\frac{x_0+ x_{1}}{2}\big)^{\intercal}(x_{1}-x_{0})
		-A\big(\frac{x_0+ x_{1}}{2}\big)	+\frac{h}{2}F\big(\frac{x_0+ x_{1}}{2}\big)\Big),
	\end{aligned}
\end{equation}
where $p_0$ is obtained by considering $p(t)=v(t)+A(x(t))$ at $t=0$. 

Since the two-step scheme \eqref{Method} is implicit, an iteration is required to compute $x_{n+1}$.
For practical implementation,  the $\varepsilon^{-1}$-dependent terms involving $x_{n+1}$ are collected on the left-hand side of \eqref{Method}.  The resulting equation for $x_{n+1}$ is then solved by a fixed-point iteration, which reduces the influence of high oscillations in the numerical implementation.  Based on the implicit two-step formulation \eqref{Method}, the complete implementation of the filtered variational integrator is summarized in the flowchart below.
\begin{algorithm}[h!]
		\refstepcounter{algorithm}%
		\label{alg:FVI}%
		\text{Flowchart of the Filtered Variational Integrator (FVI) for Charged-Particle Dynamics}\par
		\vspace{2pt}\hrule\vspace{4pt}
		\begin{algorithmic}[1]
		\setlength{\baselineskip}{0.9\baselineskip}
			\Require Initial values $x_0$, $v_0$;  step size $h$; parameter $\varepsilon$; end time $T$.
			\Ensure Trajectory $\{x_n\}_{n=1}^{T/h}$ and velocity $\{v_n\}_{n=1}^{T/h}$.
			\State $\Psi=I+\big(1-\mathrm{tanc}\big(\frac{h}{2 \varepsilon}\big)\big){\tilde{B}_0}^2,\ \ \
			\Phi=I+\big(1-\mathrm{sinc}\big(\frac{h}{2 \varepsilon}\big)^{-1}\big){\tilde{B}_0}^2.$
			\Statex \textbf{Implicit solve for $x_1$:}
			\State  $p_0=v_0+A(x_0)$, $M=\big(I+(h/(2\varepsilon))\Psi \tilde{B}_0\big)^{-1}$.
			\State  $x_{1}^{(0)}=M\big(x_0+h\Psi\big(p_0-A_1(x_0)+hF(x_0)/2\big)\big).$
			\State $x_{1}^{(1)}=M\big(x_0+h\Psi\big(p_0+A_1'((x_{1}^{(0)}+x_0)/2)(x_{1}^{(0)}
			-x_0)/2-A_1((x_{1}^{(0)}+x_0)/2)+hF((x_{1}^{(0)}+x_0)/2)/2\big)\big).$
			\MyWhile{$|x_{1}^{(0)}-x_{1}^{(1)}|>10^{-16}$ \textbf{and} $m<50$}
			\State $x_{1}^{(2)}
				=M\big(x_0+h\Psi\big(p_0+A_1'((x_{1}^{(1)}+x_0)/2)(x_{1}^{(1)}-x_0)/2-A_1((x_{1}^{(1)}+x_0)/2)$
			\Statex $\qquad \qquad+hF((x_{1}^{(1)}+x_0)/2)/2\big)\big).$
			\State $x_{1}^{(0)}=x_{1}^{(1)}$; $x_{1}^{(1)}=x_{1}^{(2)}$.
			\State $m=m+1$.
			\MyEndWhile
			\State	$x_1=x_{1}^{(2)}$.
			\Statex \textbf{Implicit solve for $x_{n+1}$:}
			\MyFor{$n = 1$ \textbf{to} $T/h-1$}
			\State	$
			x_{n+1}^{(0)}=M\big(2x_{n}-x_{n-1}+h\Psi\big(\tilde{B}_0x_{n-1}/(2\varepsilon)+A_1'((x_{n}+x_{n-1})/2)(x_{n}-x_{n-1})/2-A_1(x_{n})$
			\Statex $\qquad\qquad\ \ +A_1((x_{n}+x_{n-1})/2)+hF(x_{n})/2+hF((x_{n}+x_{n-1})/2)/2\big)\big).$
			\State $x_{n+1}^{(1)}=M\big(2x_{n}-x_{n-1}+h\Psi\big(\tilde{B}_0x_{n-1}/(2\varepsilon)+A_1'((x_{n+1}^{(0)}+x_{n})/2)(x_{n+1}^{(0)}-x_{n})/2
			$
			\Statex $\qquad\qquad \ \ \  +A_1'((x_{n}+x_{n-1})/2)(x_{n}-x_{n-1})/2-A_1((x_{n+1}^{(0)}+x_{n})/2)+A_1((x_{n}+x_{n-1})/2))$
			\Statex $\qquad\qquad \ \ \ +hF((x_{n+1}^{(0)}+x_{n})/2)/2+hF((x_{n}+x_{n-1})/2)/2\big)\big).$
			\State $m=0.$
			\MyWhile{$|	x_{n+1}^{(0)}-x_{n+1}^{(1)}|>10^{-16}$ \textbf{and} $m<50$}
			\State $x_{n+1}^{(2)}=M\big(2x_{n}-x_{n-1}+h\Psi\big(\tilde{B}_0x_{n-1}/(2\varepsilon)+A_1'((x_{n+1}^{(1)}+x_{n})/2)(x_{n+1}^{(1)}-x_{n})/2$
			\Statex $\qquad \qquad\quad \ \ \ \  +A_1'((x_{n}+x_{n-1})/2)(x_{n}-x_{n-1})/2-A_1((x_{n+1}^{(1)}+x_{n})/2)+A_1((x_{n}+x_{n-1})/2)$
			\Statex $\qquad \qquad\quad \ \ \ \ +hF((x_{n+1}^{(1)}+x_{n})/2)/2 +hF((x_{n}+x_{n-1})/2)/2\big)\big).$
			\State $x_{n+1}^{(0)}=x_{n+1}^{(1)}$; $x_{n+1}^{(1)}=x_{n+1}^{(2)}$.
			\State $m=m+1.$
			\MyEndWhile
			\State	$x_{n+1}=x_{n+1}^{(2)}.$
			\State  $v_{n+1}=2(\Phi(x_{n+1}-x_{n})/h)-v_{n}$.
			\State $x_{n-1}=x_{n}$; $x_{n}=x_{n+1}$;$v_{n}=v_{n+1}$.
			\MyEndFor
	\end{algorithmic}
\end{algorithm}

\section{Main results  and numerical experiments} \label{sec3}
This section presents the main results on error bounds and long-time near-conservation of the filtered two-step variational integrator  \eqref{Method}--\eqref{AV1}. The first part gives the results for solving the system in a moderate magnetic field  $\varepsilon = 1$,  and the second part focuses on the case of a strong magnetic field   $0<\varepsilon \ll 1$. Two numerical experiments are performed to clarify these results in each part of this section.  
\subsection{Theoretical and numerical results in a moderate magnetic field ($\varepsilon=1$)} 
In what follows, we first provide the theoretical results of the filtered two-step variational integrator \eqref{Method}--\eqref{AV1}  applied to  the CPD \eqref{charged-particle}, including  error bounds and long-time near-conservation properties. Subsequently, numerical experiments are presented to illustrate and support these theoretical findings in a moderate magnetic field regime.
\subsubsection{Theoretical results in a moderate magnetic field} \label{sec3.1}
In this subsection, we shall study the error bounds and  long-time energy and momentum near-conservation of FVI in a moderate magnetic field. The error bounds are given firstly in the following theorem.
\begin{mytheo}\textbf{(Error bounds)}\label{error bound 1}
It is assumed that \eqref{charged-particle} has sufficiently smooth solutions, and the functions $A(x)$ and $F(x)$ are sufficiently differentiable. Moreover, we assume that $A(x)$ and $F(x)$ are locally Lipschitz continuous with Lipschitz constants $L$. There exists a  constant $h_0>0$, such that  if the stepsize $h$ satisfies $ h \leq h_0$ and 
$\abs{\cos(h/2)}\ge c>0$, the global errors can be estimated as
	\begin{equation*}
		\abs{x_{n+1}-x(t_{n+1})} \leq Ch^2, \ \quad	\abs{v_{n+1}-v(t_{n+1})} \leq Ch^2 \quad \text{for} \quad nh\leq T,
	\end{equation*}	
where
$C>0$ is a generic constant independent of $h$  or $n$  but depends on  $L$ and  $T$.
\end{mytheo}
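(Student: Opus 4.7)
The plan is to establish the position and velocity error bounds through the classical consistency-plus-stability framework for symmetric two-step methods applied to the second-order ODE \eqref{charged-particle}. In the regime $\varepsilon=1$, the assumption $|\cos(h/2)|\ge c>0$ keeps $\operatorname{tanc}(h/2)$ uniformly bounded, so the filter matrices obey $\Psi = I + O(h^2)$ and $\Phi = I + O(h^2)$. Consequently \eqref{Method} may be read as an $O(h^2)$-perturbation of the unfiltered midpoint-based discrete Euler--Lagrange scheme, whose second-order convergence is classical.

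The first step is to compute the local truncation error by substituting the exact solution into \eqref{Method} and Taylor-expanding around $t_n$. Using $\ddot{x}=\dot{x}\times B(x)+F(x)$ together with $B=\nabla\times A$, the combination $-h\bigl(A(x_{n+1/2})-A(x_{n-1/2})\bigr)$ and the symmetric $A'$-transpose contributions reproduce $h^2\ddot{x}(t_n)$ up to $O(h^4)$; this is the standard second-order local consistency of the midpoint-based variational integrator. Since $\Psi-I=O(h^2)$ multiplies a right-hand side whose deviation from $h^2\ddot{x}(t_n)$ is itself $O(h^2)$, the additional filter contribution to the residual is of order $O(h^4)$. The overall local truncation error of \eqref{Method} on the exact solution is therefore $\delta_n=O(h^4)$.

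Next I would propagate $\delta_n$ through the error recursion. Setting $e_n=x_n-x(t_n)$, subtracting the scheme evaluated at $x_n$ from the scheme evaluated at $x(t_n)$, and invoking the local Lipschitz continuity of $A$, $A'$ and $F$ (the iterates staying in a bounded neighbourhood of the exact solution by induction for $h$ small) together with $\|\Psi\|\le C$ yields
\begin{equation*}
|e_{n+1}-2e_n+e_{n-1}| \le Ch\bigl(|e_{n+1}|+|e_n|+|e_{n-1}|\bigr)+|\delta_n|.
\end{equation*}
A discrete Gronwall argument for this two-step recurrence, seeded by $e_0=0$ and by $|e_1|\le Ch^3$ (obtained by Taylor-expanding the implicit single-step relation \eqref{start_x} around $t_0$, with unique solvability coming from a contraction argument for $h$ small), then gives $|e_n|\le Ch^2$ for $nh\le T$.

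Finally, for the velocity error I would exploit the post-processing \eqref{AV1}. With $\epsilon_n := v_n-\dot{x}(t_n)$, the identity $(\epsilon_{n+1}+\epsilon_n)/2 = \Phi(x_{n+1}-x_n)/h-(\dot{x}(t_{n+1})+\dot{x}(t_n))/2$, combined with $\Phi=I+O(h^2)$, the just-established position estimate, and Taylor expansion of $\dot{x}$, yields $\epsilon_{n+1}+\epsilon_n=2\eta_n$ with $\eta_n=O(h^2)$ and, crucially, $\eta_{n+1}-\eta_n=O(h^3)$ coming from the smooth time dependence of both the exact solution and the discrete trajectory. Starting from $\epsilon_0=0$ and solving this alternating recurrence by Abel summation (grouping consecutive pairs, each contributing $O(h^3)$, over $O(1/h)$ pairs) then gives $|\epsilon_n|\le Ch^2$. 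I expect the most delicate point of the entire proof to be precisely this last step: a triangle-inequality estimate for the alternating sum only yields $O(h)$, so rigorously establishing the required regularity in $n$ of $\eta_n$ (which demands controlling how the position defect varies smoothly between consecutive steps) is the essential ingredient for obtaining the sharp $O(h^2)$ rate on the velocity.
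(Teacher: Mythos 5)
Your treatment of the position error is essentially the paper's own argument: the same $\mathcal{O}(h^4)$ local truncation error (using $\Psi=I+\mathcal{O}(h^2)$ and $v\times B(x)=(A'(x)^{\intercal}-A'(x))v$), the same two-step recursion $\abs{e_{n+1}-2e_n+e_{n-1}}\le Ch(\abs{e_{n+1}}+\abs{e_n}+\abs{e_{n-1}})+Ch^4$ resolved by summation and discrete Gronwall, and the same seeding $e_0=0$, $e_1=\mathcal{O}(h^3)$ from the starting relation \eqref{start_x}. That part is sound and matches Section~\ref{sec4.1}.

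The gap is in the velocity argument, at exactly the point you flag as delicate. Writing $\eta_n=\Phi(x_{n+1}-x_n)/h-(\dot x(t_{n+1})+\dot x(t_n))/2$, the problematic contribution is $\Phi(e_{n+1}-e_n)/h$ (the two other pieces, coming from the exact solution and from $\Phi-I$, do satisfy $\mathcal{O}(h^2)$ with increments $\mathcal{O}(h^3)$). The Gronwall argument you run yields only $\abs{e_n}\le Ch^2$; from it one gets merely $\abs{e_{n+1}-e_n}\le Ch^2$ (summing $\abs{\delta_{m+1}-\delta_m}\le Ch\cdot Ch^2+Ch^4$ over $\mathcal{O}(1/h)$ steps) and $\abs{e_{n+1}-2e_n+e_{n-1}}\le Ch^3$. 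That gives only $\eta_n=\mathcal{O}(h)$ and $\eta_{n+1}-\eta_n=\mathcal{O}(h^2)$, so your paired Abel summation over $\mathcal{O}(1/h)$ pairs produces $\mathcal{O}(h)$, not $\mathcal{O}(h^2)$. What your scheme actually requires is the uniform first-difference bound $e_{n+1}-e_n=\mathcal{O}(h^3)$ and, fed back into the Lipschitz terms, $e_{n+1}-2e_n+e_{n-1}=\mathcal{O}(h^4)$; neither follows from $\abs{e_n}\le Ch^2$ alone. They can be obtained, but only via a separate Gronwall estimate on $\delta_n=e_{n+1}-e_n$ in which the dominant Lipschitz term $-h\Psi\bigl(A(x(t_{n+1/2}))-A(x_{n+1/2})-A(x(t_{n-1/2}))+A(x_{n-1/2})\bigr)$ is recognized as $h$ times an increment of a quantity whose step-to-step variation is $\mathcal{O}(h^3)+\mathcal{O}(\abs{\delta})$. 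Without that lemma your final step does not close. (The paper itself only says the velocity bound follows ``similarly,'' so you have gone further than the source here — but the regularity in $n$ of $\eta_n$ that you correctly identify as the essential ingredient is asserted rather than proved, and it is precisely the statement that needs work.)
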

The good long-time  behavior of energy  along the numerical solution of the filtered two-step variational integrator \eqref{Method}--\eqref{AV1} is shown by the following theorem.
\begin{mytheo}\textbf{(Energy near-conservation)}\label{conservation-H1}
Assume that the numerical solution \eqref{Method}--\eqref{AV1}
belongs to a compact set that is independent of $h$. Then the energy is nearly preserved along the filtered two-step variational integrator as follows:
	\begin{equation*}
		\abs{H(x_{n+1/2},v_{n+1/2})-H(x_{1/2},v_{1/2})} \leq Ch^2 \quad \  \textmd{for} \quad nh \leq ch^{-N+2},
	\end{equation*}	
where $N \geq 3$ is the truncation number, constant $c$  is independent of $h$ and the  $C$ is a constant that is independent of $n$ and $h$ as long as $nh \leq ch^{-N+2}$.
\end{mytheo}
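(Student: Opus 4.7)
The plan is to establish the long-time energy near-conservation by backward error analysis, exploiting the variational origin of \eqref{Method} together with its symmetry. Since $\varepsilon=1$ the filter satisfies $\Psi-I=(1-\tanc(h/2))\tilde B_0^2=O(h^2)$ and is a fixed symmetric, positive-definite matrix, so the filtered scheme is only an $O(h^3)$ perturbation per step of the pure discrete Euler--Lagrange flow derived from \eqref{Met-p}, consistent with the second-order accuracy from Theorem~\ref{error bound 1}.

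First, I would reinterpret \eqref{Method} as a \emph{variational integrator} for a modified discrete Lagrangian. Multiplying the velocity equation in \eqref{Met-p} through by $\Psi^{-1}$ turns it into the exact discrete Euler--Lagrange equation of
\begin{equation*}
\tilde L_h(x_n,x_{n+1})=\tfrac{h}{2}\,\hat v_{n+1/2}^{\intercal}\Psi^{-1}\hat v_{n+1/2}+hA(x_{n+1/2})^{\intercal}\hat v_{n+1/2}-hU(x_{n+1/2}),
\end{equation*}
so that \eqref{Method} is precisely the variational integrator associated with $\tilde L_h$. By Theorem~VI.6.1 of \cite{hairer2006} this one-step reformulation $\Phi_h:(x_n,p_n)\mapsto(x_{n+1},p_{n+1})$, with $p_n=-\partial_{x_n}\tilde L_h$, is symplectic; symmetry is inherited from the $(x_n\leftrightarrow x_{n+1},\,h\leftrightarrow -h)$ invariance built into $\tilde L_h$. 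The midpoint sample $(x_{n+1/2},v_{n+1/2})$ appearing in the theorem statement is the natural evaluation point attached to this discrete Lagrangian.

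Second, I would apply backward error analysis to the symplectic, symmetric, second-order map $\Phi_h$ (Chapter~IX of \cite{hairer2006}). Because $\tilde L_h$ approximates $\int L\,\mathrm{d}t$ with only $O(h^2)$ corrections coming from $\Psi^{-1}=I+O(h^2)$, the formal modified Hamiltonian has the form
\begin{equation*}
\tilde H(x,v,h)=H(x,v)+h^2 H_2(x,v)+h^4 H_4(x,v)+\cdots,
\end{equation*}
with only even-order corrections due to symmetry and $|\tilde H-H|\le Ch^2$ on the compact set containing the numerical trajectory. Truncating at order $N$ gives $\tilde H^{(N)}$ satisfying the standard per-step bound
\begin{equation*}
\bigl|\tilde H^{(N)}(x_{n+3/2},v_{n+3/2})-\tilde H^{(N)}(x_{n+1/2},v_{n+1/2})\bigr|\le Ch^{N+1}.
\end{equation*}
Summing over $n$ steps and combining with $|\tilde H^{(N)}-H|\le Ch^2$ via the triangle inequality delivers $|H(x_{n+1/2},v_{n+1/2})-H(x_{1/2},v_{1/2})|\le C(nh^{N+1}+h^2)\le Ch^2$ whenever $nh\le ch^{-N+2}$.

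The main obstacle will be producing clean, bounded BEA estimates on the full two-step geometry at $\varepsilon=1$. The one-step reformulation requires a careful choice of discrete conjugate momentum so that $\Phi_h$ is a well-defined diffeomorphism, which is where the hypothesis $|\cos(h/2)|\ge c>0$ from Theorem~\ref{error bound 1} enters to keep $\Psi$ invertible and uniformly bounded. One must then verify that each coefficient $H_{2k}(x,v)$ of $\tilde H$ is smooth and bounded on the compact set, independently of $h$, which is where the sufficient smoothness of $A$ and $F$ together with the boundedness assumption on the numerical trajectory play a crucial role. Once these points are handled, the above summation argument yields the $Ch^2$ bound on the asserted time window.
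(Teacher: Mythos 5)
Your proposal is correct in substance but takes a genuinely different route from the paper. You observe that the filtered scheme \eqref{Method} is itself the exact discrete Euler--Lagrange equation of a modified discrete Lagrangian whose kinetic term carries $\Psi^{-1}$ (indeed, since $\Psi$ is symmetric, $\partial_2\tilde L_h(x_{n-1},x_n)+\partial_1\tilde L_h(x_n,x_{n+1})=0$ multiplied by $h\Psi$ reproduces \eqref{Method} exactly), and you then invoke the general modified-Hamiltonian theory for symplectic, symmetric, second-order one-step maps. The paper instead performs backward error analysis directly on the two-step recursion: it builds a modified second-order ODE for the midpoint interpolant $\tilde y=L_2(e^{hD})p$ and exhibits a formal first integral $H_h=H+h^2H_2+\cdots$ by showing, via the operator expansions \eqref{operators-expasion} and the total-derivative lemma of \cite{Hairer2017-2}, that both sides of \eqref{modified-eq2} are total time derivatives. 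Your route is shorter and more conceptual if the symplectic BEA machinery of Chap.~IX of \cite{hairer2006} is taken as given, and the observation that the filtered method remains exactly variational is a structural point the paper never states; the paper's route is self-contained and sets up the operator framework reused for the momentum and the strong-field analysis. One step you must still supply: your almost-invariant lives at the integer-point variables $(x_n,p_n)$ with $p_n=-\partial_1\tilde L_h(x_n,x_{n+1})$, whereas the theorem is stated at the midpoint quantities $(x_{n+1/2},v_{n+1/2})$ with $v_{n+1/2}=\Phi\hat v_{n+1/2}$, and the individual discrepancies are only $\mathcal{O}(h)$. They cancel inside $H$: since $p_n-A(x_n)=\hat v_{n+1/2}-\frac h2\big(\hat v_{n+1/2}\times B(x_{n+1/2})+F(x_{n+1/2})\big)+\mathcal{O}(h^2)$ and $v_{n+1/2}=\hat v_{n+1/2}+\mathcal{O}(h^2)$, the $\mathcal{O}(h)$ term in $\frac12\abs{\cdot}^2$ equals $\frac h2\,\hat v_{n+1/2}^{\intercal}F$ (the magnetic part drops out by orthogonality) and is exactly offset by $U(x_{n+1/2})-U(x_n)=-\frac h2 F^{\intercal}\hat v_{n+1/2}+\mathcal{O}(h^2)$, giving $H(x_{n+1/2},v_{n+1/2})=\frac12\abs{p_n-A(x_n)}^2+U(x_n)+\mathcal{O}(h^2)$ uniformly; this is the analogue of the paper's final passage from $(\tilde y,\dot{\tilde y})$ to $(x_{n+1/2},v_{n+1/2})$ and closes your argument.
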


In addition to energy, the CPD \eqref{charged-particle} also possesses a conserved momentum when the scalar and vector potentials satisfy certain invariance conditions. Specifically, if
\begin{equation}\label{U-A}
	U(e^{\tau S}x) = U(x) \quad \textmd{and} \quad e^{-\tau S}A(e^{\tau S}x) = A(x) \quad \textmd{for all real} \ \tau,
\end{equation}
where $S$ is a skew-symmetric matrix, then the momentum is exactly conserved along the solution of the system \eqref{charged-particle}. 
The following theorem shows that the filtered two-step variational integrator \eqref{Method}--\eqref{AV1} exhibits near-conservation of the momentum over long times.
\begin{mytheo}(\textbf{Momentum near-conservation})\label{conservation-M1}
Suppose that the numerical solution \eqref{Method}--\eqref{AV1}  remains in a compact set independent of $h$. Under the  invariance conditions \eqref{U-A}, we further assume that the skew-symmetric matrix $S$ satisfies $S\tilde{B}_{0}^2=\tilde{B}_{0}^{2}S$. Then the numerical solution satisfies a near-conservation of the momentum:
\begin{equation*}
	\abs{M(x_{n+1/2},v_{n+1/2})-M(x_{1/2},v_{1/2})} \leq Ch^2 \quad \ \textmd{for} \quad nh \leq ch^{-N+2},
\end{equation*}	
where $N \geq 3$ is a truncation, $c$ is a constant independent of $h$ and $C$ is a constant independent of $n$ and $h$ with $nh \leq ch^{-N+2}$.
\end{mytheo}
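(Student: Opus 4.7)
The plan is to adapt the backward error analysis used for Theorem \ref{conservation-H1} to the conserved momentum $M$, exploiting the continuous Noether theorem applied to a modified Lagrangian whose Euler--Lagrange flow interpolates the FVI trajectory over exponentially long times. The key additional ingredient beyond the energy proof is verifying that the modified Lagrangian inherits the symmetry $x \mapsto e^{\tau S}x$ of the continuous system, and here the assumption $S\tilde{B}_0^2 = \tilde{B}_0^2 S$ plays a decisive role.

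First I would construct, in parallel to the analysis underlying Theorem \ref{conservation-H1}, a modified Lagrangian
\begin{equation*}
L_{\mathrm{mod}}(x,\dot x) = L(x,\dot x) + h\, L_1(x,\dot x) + h^2\, L_2(x,\dot x) + \cdots,
\end{equation*}
truncated at order $h^{N-1}$, such that the exact time-$h$ flow of the associated Euler--Lagrange equation coincides with one step of the FVI map \eqref{Method}--\eqref{AV1} up to a defect of size $O(h^{N+1})$. This is the standard Lagrangian backward error analysis for symmetric variational integrators (see Chapter IX of \cite{hairer2006}), adapted to the filtered two-step form; the symmetry of the scheme ensures that only even powers of $h$ contribute essentially, and the filters are incorporated into the recursion via their power series in $\tilde{B}_0^2$.

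Second, I would verify the $S$-invariance $L_{\mathrm{mod}}(e^{\tau S}x, e^{\tau S}\dot x) = L_{\mathrm{mod}}(x,\dot x)$. The hypothesis $S\tilde{B}_0^2 = \tilde{B}_0^2 S$ is essential here: since both $\Psi$ and $\Phi$ in \eqref{MVF-phi} are polynomials in $\tilde{B}_0^2$, the commutation implies $e^{\tau S}\Psi = \Psi e^{\tau S}$ and $e^{\tau S}\Phi = \Phi e^{\tau S}$. Combined with the invariance assumptions \eqref{U-A} and the orthogonality of $e^{\tau S}$ (which follows from skew-symmetry of $S$), a termwise check shows that each coefficient $L_j$ in the expansion is invariant under the $S$-action. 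Applying the continuous Noether theorem to $L_{\mathrm{mod}}$ then produces an exactly conserved modified momentum $M_{\mathrm{mod}} = M + h M_1 + h^2 M_2 + \cdots$ along the modified flow, and a direct expansion gives $M_{\mathrm{mod}} - M = O(h^2)$.

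Third, the long-time estimate follows by the usual drift argument. The FVI map differs from the time-$h$ flow of $L_{\mathrm{mod}}$ by $O(h^{N+1})$ per step, so the drift of $M_{\mathrm{mod}}$ evaluated along the numerical trajectory accumulates to at most $n \cdot O(h^{N+1}) = O(h^2)$ for $nh \leq c h^{-N+2}$; combined with $M_{\mathrm{mod}} - M = O(h^2)$, the claimed bound on $|M(x_{n+1/2},v_{n+1/2}) - M(x_{1/2},v_{1/2})|$ follows. The main obstacle is the second step: tracking the $S$-equivariance through the backward error recursion, given the midpoint values $x_{n\pm1/2}$, the derivative $A'(x)^{\intercal}$, and the filter matrices appearing in \eqref{Method}. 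The commutation $S\tilde{B}_0^2 = \tilde{B}_0^2 S$ is exactly what allows $e^{\tau S}$ to be moved past $\Psi$ and $\Phi$, which is why this hypothesis cannot be dropped; beyond this, the verification reduces to a direct termwise computation using \eqref{U-A} and the orthogonality of $e^{\tau S}$.
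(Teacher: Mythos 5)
Your overall strategy (backward error analysis plus the symmetry of the problem, with the commutation hypothesis used to move $S$ past the filter matrices) is the right one, and your diagnosis of why $S\tilde{B}_0^2=\tilde{B}_0^2S$ is needed is essentially correct. But the route differs from the paper's in a way that leaves a real gap. The paper never constructs a modified Lagrangian and never invokes Noether's theorem: it builds only a modified second-order differential equation $\ddot{\tilde y}=\Psi\big(\dot{\tilde y}\times B(\tilde y)+F(\tilde y)+h^2g_2+\cdots\big)$ interpolating the two-step map, pairs it with $(S\tilde y)^{\intercal}\Psi^{-1}$, and verifies \emph{directly} that both sides are total time derivatives — the right-hand side via $A'(\tilde y)S\tilde y=SA(\tilde y)$, $\tilde y^{\intercal}S\nabla U(\tilde y)=0$ and the oddness of $L_1L_2^{-1}(e^{hD})$, and the left-hand side via the telescoping identities for $\tilde y^{\intercal}S\tilde y^{(2l)}$ and $\tilde y^{\intercal}S\tilde B_0^2\tilde y^{(2l)}$. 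In the paper the commutation hypothesis enters precisely there: it makes $S\tilde B_0^2$ skew-symmetric, so that the term $\tilde y^{(l)\intercal}S\tilde B_0^2\tilde y^{(l)}$ vanishes and the telescoping sum closes. This yields $M_h=M+h^2M_2+\cdots$ with $\frac{\mathrm d}{\mathrm dt}M_h=\mathcal O(h^N)$ without ever asking whether the modified equation is variational.

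The gap in your version is the first step: for a genuine two-step map, backward error analysis produces a modified ODE but not automatically an Euler--Lagrange structure, and the filter $\Psi$ multiplying the entire right-hand side of the modified equation makes its variational character non-obvious. You assert the existence of $L_{\mathrm{mod}}$ by appeal to "standard Lagrangian backward error analysis for symmetric variational integrators," but the scheme \eqref{Method} is the \emph{filtered} modification of a variational integrator, and you do not verify that the filtering preserves the variational structure. This can in fact be repaired — \eqref{Method} coincides with the discrete Euler--Lagrange equations of the modified discrete Lagrangian obtained by replacing the kinetic term with $\tfrac h2\,\hat v_{n+1/2}^{\intercal}\Psi^{-1}\hat v_{n+1/2}$ (note $\Psi^{-1}$ is symmetric and, under your hypothesis, commutes with $S$), so the scheme is genuinely variational and the Noether route is viable — but this observation is the load-bearing step of your argument and must be stated and proved, not assumed. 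With it in place, your verification of the $S$-invariance of each coefficient $L_j$ and the final drift estimate over $nh\le ch^{-N+2}$ match the paper's patching argument and would give the claimed $\mathcal O(h^2)$ bound (after also noting that replacing $\Psi^{-1}\dot{\tilde y}$ by $\Phi$-filtered velocities in $M$ costs only $\mathcal O(h^2)$ in the moderate regime).
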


\subsubsection{Numerical experiments in a moderate magnetic field}  \label{sec3.1.1}
In this section, we carry out two numerical experiments to demonstrate the advantages of the proposed method.
The schemes used for comparison include the Boris method (BORIS) \cite{Boris}, the second-order two-step symmetric method (TSM) \cite{wangwu2020}, the filtered variational method (FVARM) \cite{Lubich2022}, and the filtered two-step variational integrator (FVI) introduced in Section~\ref{sec2}.

To test the performance of all the methods, we compute the global errors:
$error_{x}:=\lvert x_{n}-x(t_n) \rvert /\lvert x(t_n)\rvert $, $error_{v}:=\lvert v^{n}-v(t_n)\rvert/\lvert v(t_n)\rvert$, $error_{v_{\parallel}}:= \lvert v_{\parallel}^{n}-v_{\parallel}(t_n)\rvert/\lvert v_{\parallel}(t_n)\rvert$, $error_{v_{\perp}}:=\lvert v_{\perp}^{n}-v_{\perp}(t_n)\rvert/\lvert v_{\perp}(t_n)\rvert$,
the energy  error
$e_{H}:=H(x_{n+1/2},v_{n+1/2})-H(x_{1/2},v_{1/2})$,
the momentum error $e_{M}:=M(x_{n+1/2},v_{n+1/2})-M(x_{1/2},v_{1/2})$
and the magnetic moment  error
$e_{I}:=I(x_{n+1/2},v_{n+1/2})-I(x_{1/2},v_{1/2})$
in the numerical experiment. The reference solution is obtained by using ``ode45" of MATLAB. For implicit methods, we choose fixed-point iteration and set $ 10^{-16} $ as the
error tolerance  and 50  as the maximum number of  each iteration.

\noindent\vskip3mm \noindent{Problem 1. \textbf{(Moderate magnetic field with invariance  conditions \eqref{U-A})}}
 For the charged-particle dynamics \eqref{charged-particle},  we consider the magnetic field ${B}(x)=\nabla_x \times A(x)=\big(
0,0,(x_{1}^2+x_{2}^2)^{\frac{1}{2}}\big)^{\intercal}$, where 
$A(x)=\big(-x_{2}(x_{1}^2+x_{2}^2)^{\frac{1}{2}}/3,x_{1}(x_{1}^2+x_{2}^2)^{\frac{1}{2}}/3,0\big)^{\intercal}$. The scalar potential is given by $U(x)=(x_{1}^2+x_{2}^2)^{-\frac{1}{2}}/100$ and the momentum reads $M(x,v)=\big(v_{1}-x_2(x_{1}^2+x_{2}^2)^{\frac{1}{2}}/3\big)x_2-\big(v_{2}+x_1(x_{1}^2+x_{2}^2)^{\frac{1}{2}}/3\big)x_1.$
We choose the initial values $x(0)=(0,1,0.1)^{\intercal}$
 and $v(0)=(0.09,0.05,0.2)^{\intercal}$.
The problem is solved on $[0, 1]$ with stepsizes
$h=1/2^{k}$ for $k=1,\ldots,8$ to compute the global errors shown in Figure~\ref{fig:problem11}.
To illustrate the near-conservation of the energy $H$ and the momentum $M$, we further integrate the system on $[0,10000]$ with different stepsizes;
the results are displayed in Figure~\ref{fig:problem12}.

\begin{figure}[H]
	\centering\tabcolsep=0.5mm
	\begin{tabular}
		[c]{cc}
		\includegraphics[width=3.8cm,height=3.2cm]{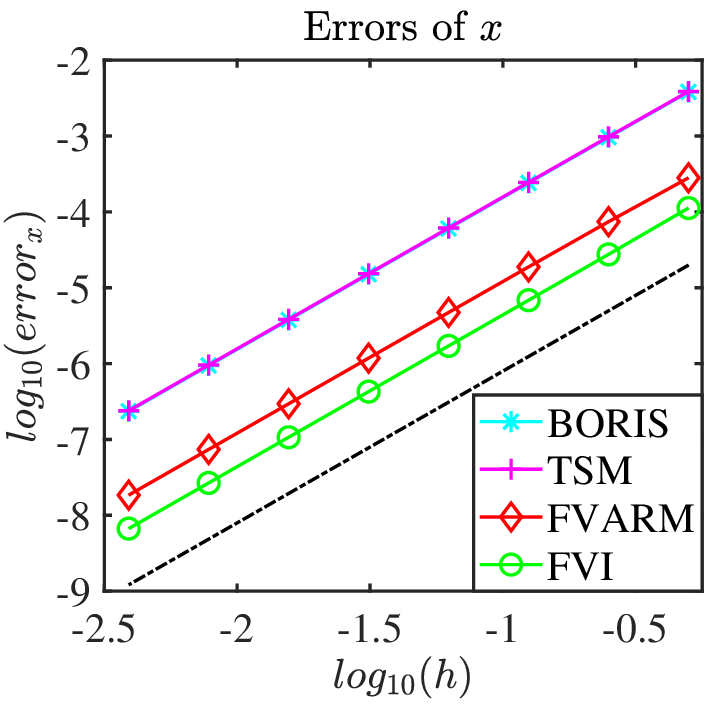}  \qquad
		\includegraphics[width=3.8cm,height=3.2cm]{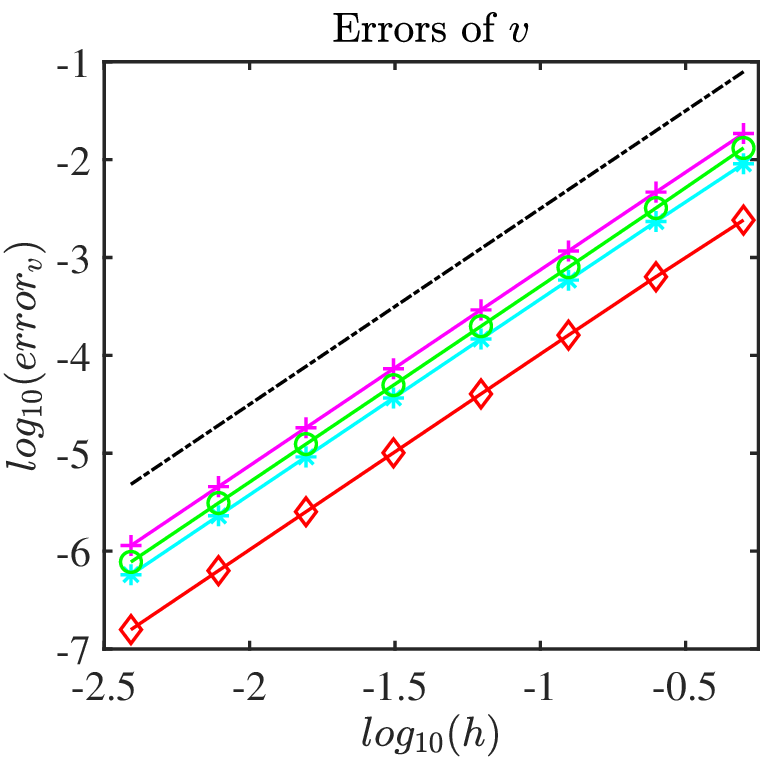}
	\end{tabular}
\caption{Problem 1.
The global errors $error_x$ and $error_v$ with $t=1$ and $h=1/2^{k}$ for $k=1,\ldots,8$ (the dash-dot line is slope two).}
	\label{fig:problem11}
\end{figure}

\begin{figure}[H]
	\centering\tabcolsep=0.5mm
	\begin{tabular}
		[c]{cc}
		\includegraphics[width=5.8cm,height=2.75cm]{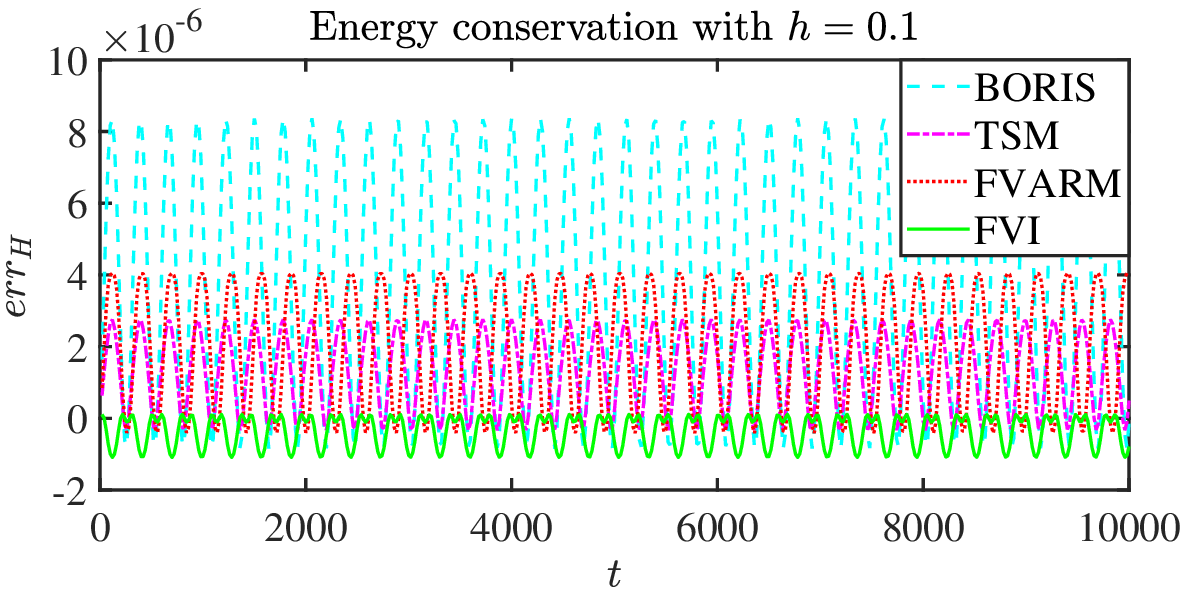}\ \
		\includegraphics[width=5.8cm,height=2.75cm]{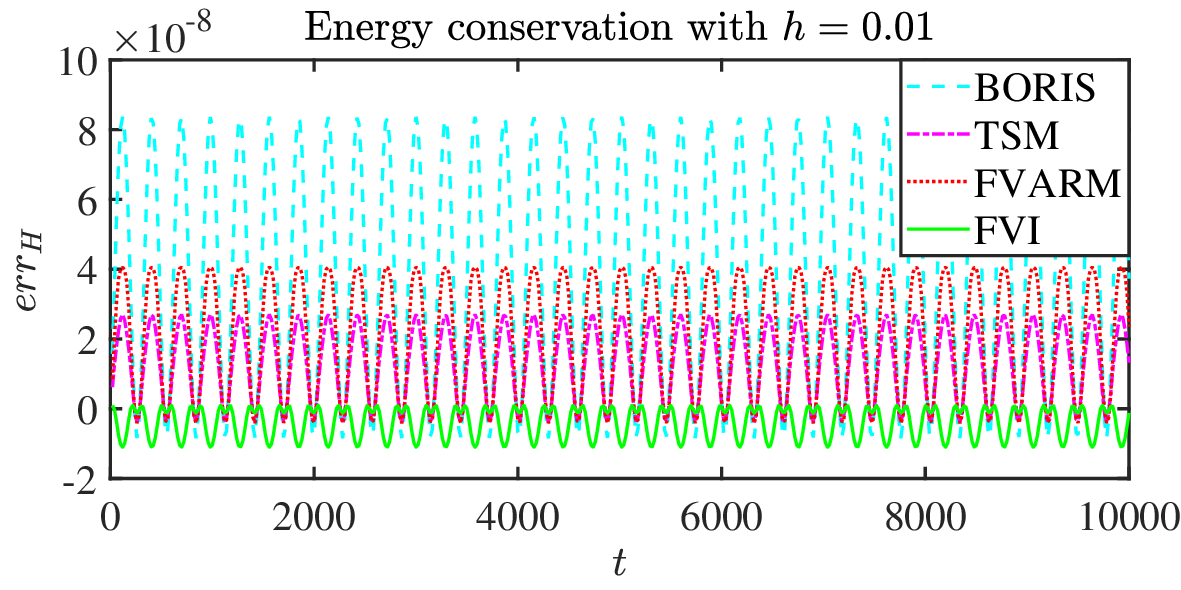}\\
        \includegraphics[width=5.8cm,height=2.75cm]{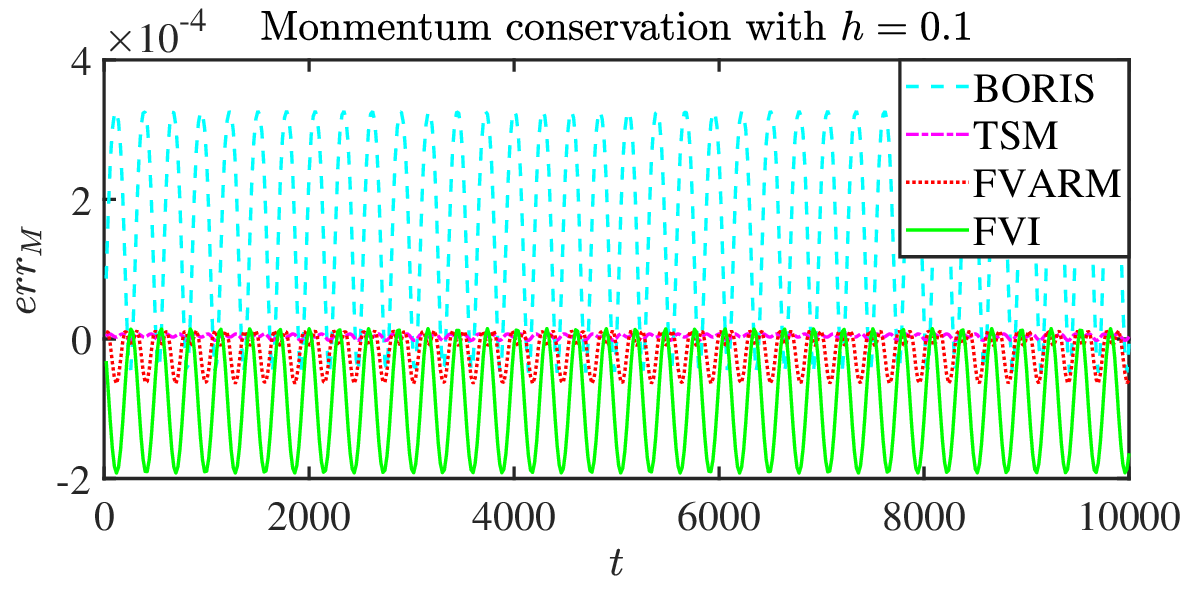}\ \
\includegraphics[width=5.8cm,height=2.75cm]{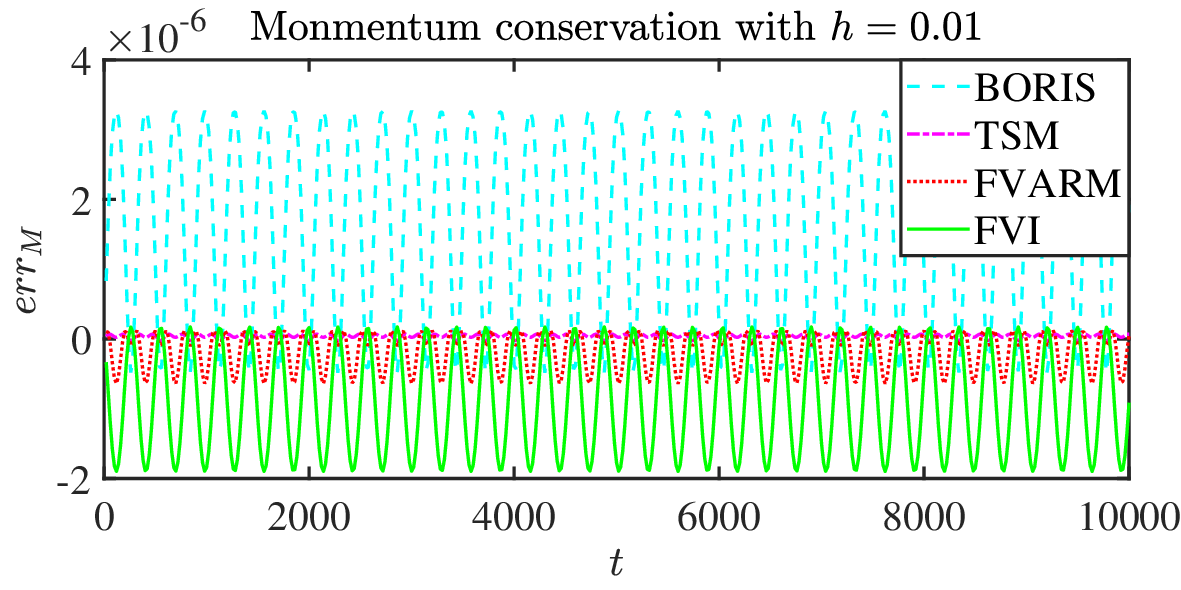}
	\end{tabular}
	\caption{Problem 1. Evolution of energy error $e_H$  and  momentum error $e_{M}$ with different step sizes $h$.}
	\label{fig:problem12}
\end{figure}

\noindent\vskip3mm \noindent{Problem 2. \textbf{(Moderate magnetic field without invariance  conditions \eqref{U-A})}} This problem concerns the motion of a charged particle under the magnetic field
$B(x)=\nabla_x \times \big(x_{3}^2- x_{2}^2-x_2,x_{3}^2- x_{1}^2+x_1,x_{2}^2- x_{1}^2\big)^{\intercal}/2=(x_{2}- x_{3},x_{1}+x_3,x_{2}- x_{1}+1)^{\intercal}.$
The scalar potential and the momentum are given by $U(x)=x_{1}^2+2x_{2}^2+3x_{3}^2-x_{1}$ and $M(x,v)=(v_{1}+(x_{3}^2- x_{2}^2-x_2)/2)x_2-(v_{2}+(x_{3}^2- x_{1}^2+x_1)/2)x_1$. 
The initial values are  $x(0)=(0,0.1,0.5)^{\intercal}$
and $v(0)=(0.02,0.1,0.7)^{\intercal}$. The global errors and the near-conservation of energy and momentum are presented in Figures \ref{fig:problem21} and \ref{fig:problem22}, respectively. Note that the scalar and vector potentials do not satisfy the invariance conditions \eqref{U-A}. This case is designed to examine how the proposed integrator captures the long-time momentum behavior in the absence of these conditions.
\begin{figure}[h!]
	\centering\tabcolsep=0.5mm
	\begin{tabular}
		[c]{cc}
		\includegraphics[width=3.8cm,height=3.2cm]{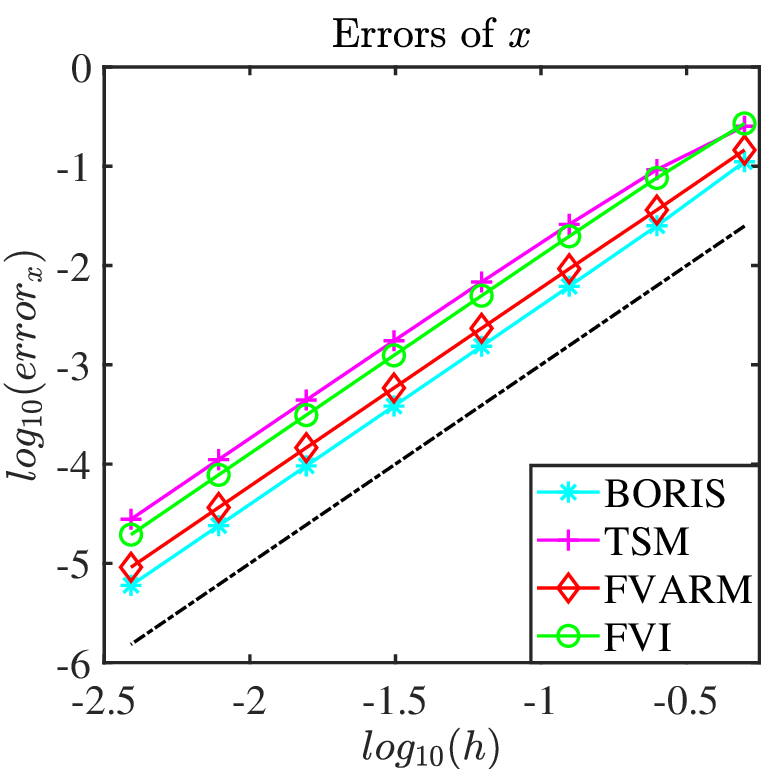}  \qquad
		\includegraphics[width=3.8cm,height=3.2cm]{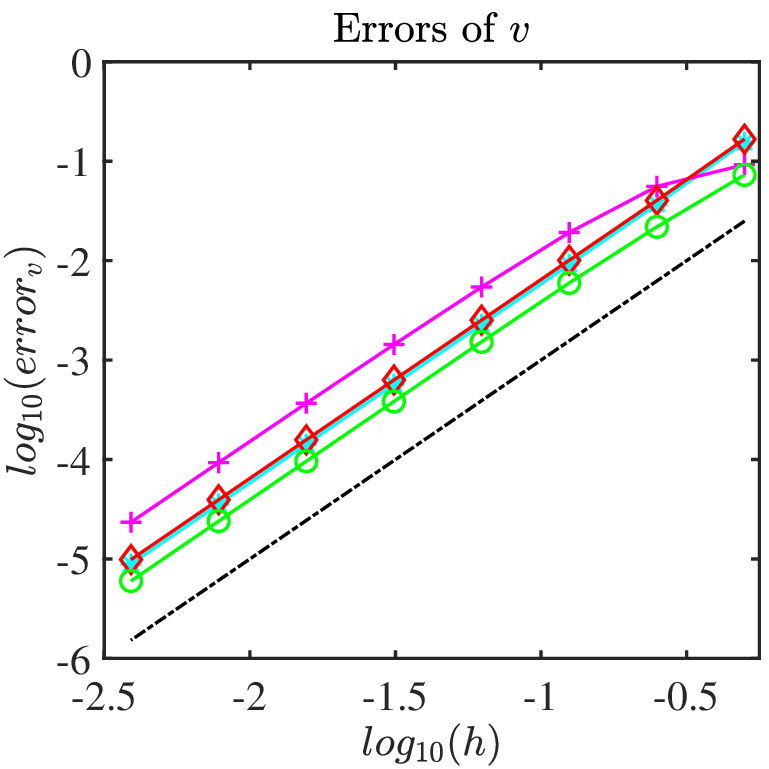}
	\end{tabular}
	\caption{ Problem 2.
	The global errors $error_x$ and $error_v$ with $t=1$ and $h=1/2^{k}$ for $k=1,\ldots,8$ (the dash-dot line is slope two).}
	\label{fig:problem21}
\end{figure}

\begin{figure}[h!]
	\centering\tabcolsep=0.5mm
	\begin{tabular}
		[c]{cc}
		\includegraphics[width=5.8cm,height=2.8cm]{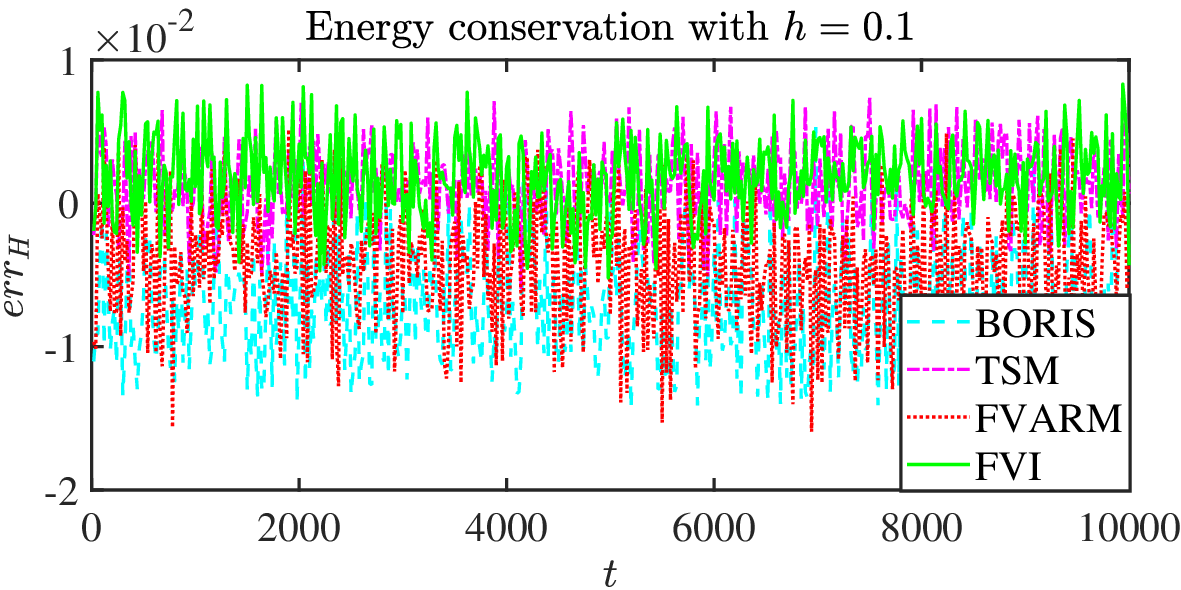}\ \
		\includegraphics[width=5.8cm,height=2.8cm]{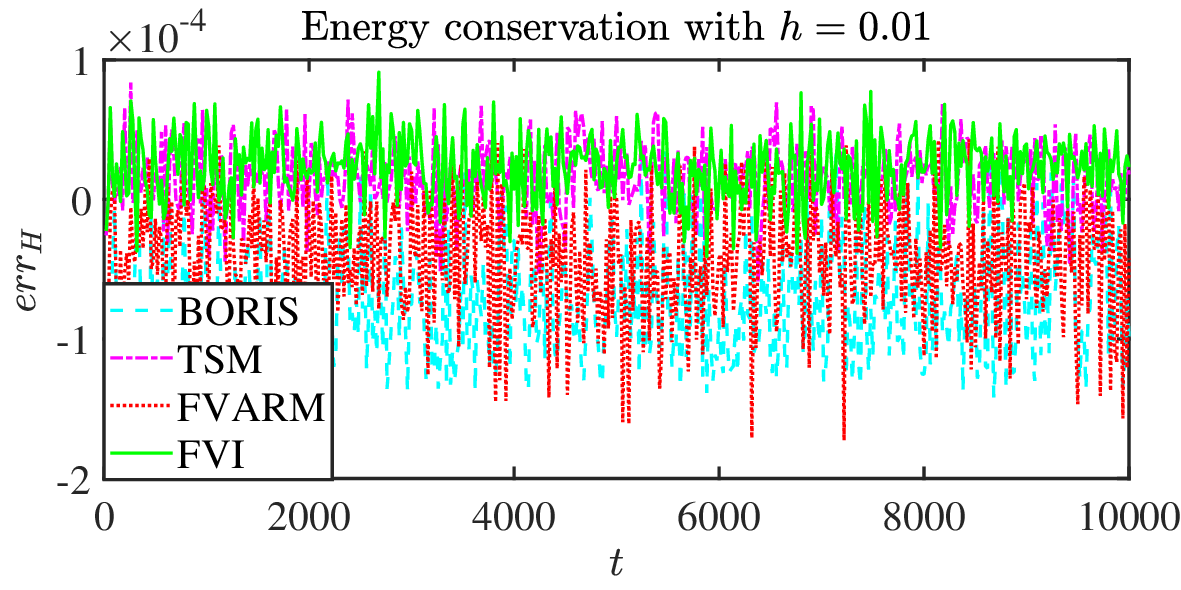}\\
        \includegraphics[width=5.8cm,height=2.8cm]{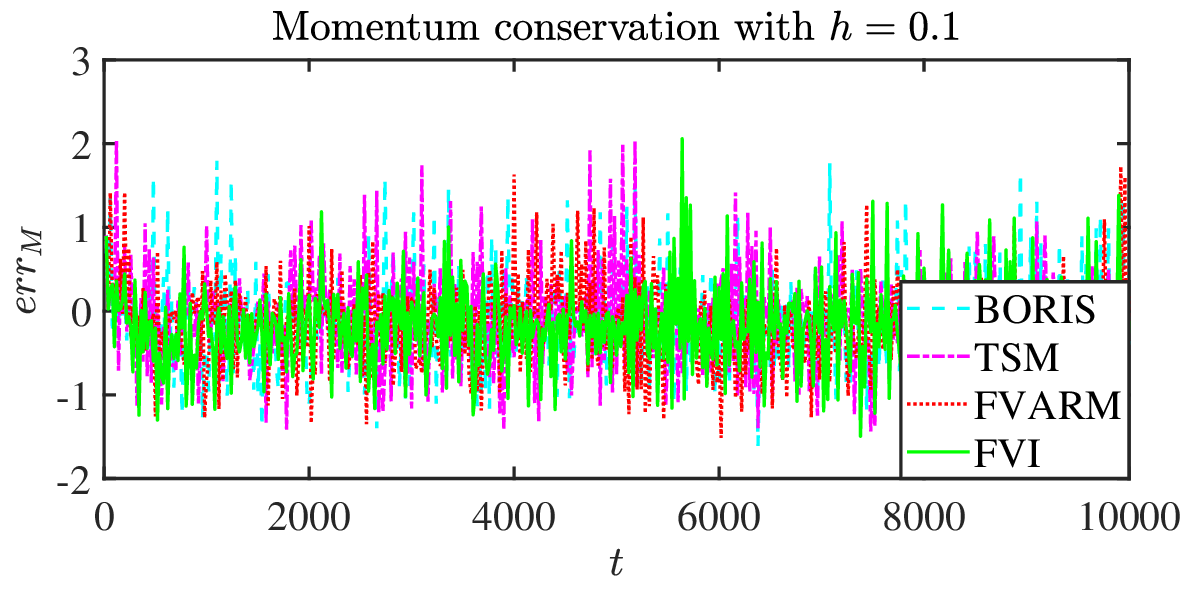}  \ \
\includegraphics[width=5.8cm,height=2.8cm]{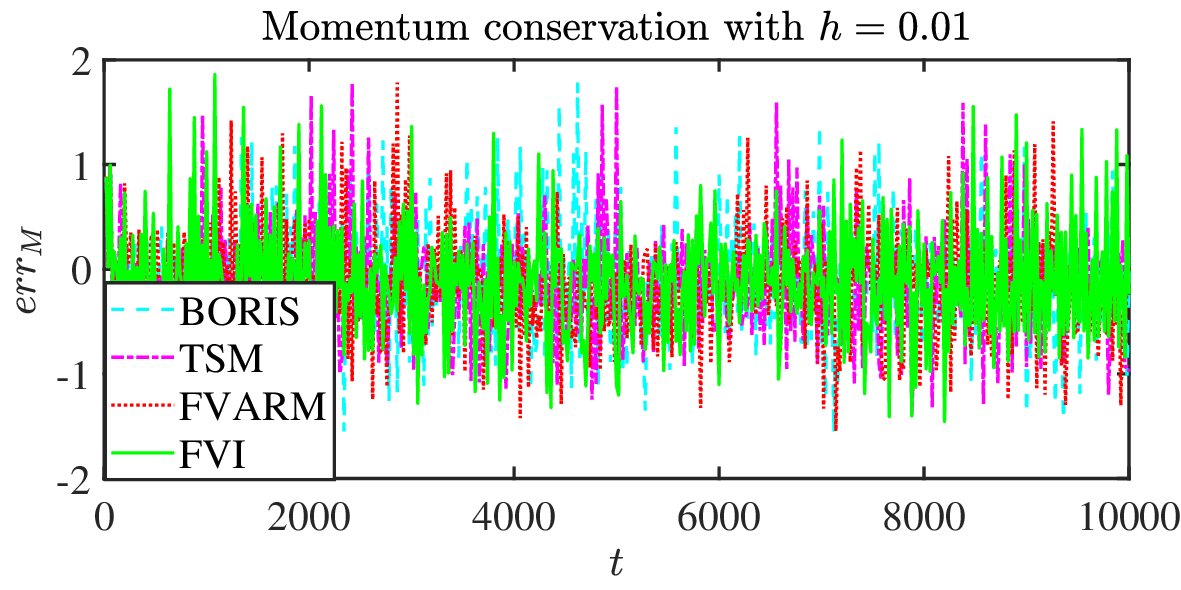}
	\end{tabular}
	\caption{Problem 2. 
   Evolution of energy error $e_H$ and  momentum error $e_{M}$ with different step sizes $h$.}
	\label{fig:problem22}
\end{figure}

The numerical results shown in Figures \ref{fig:problem11}--\ref{fig:problem22} lead to the following observations:

\textbf{Order behavior.}
Figures \ref{fig:problem11} and \ref{fig:problem21} show the global errors and it can be seen from the results
that the global error lines  are nearly parallel to the line with slope 2, which clarifies that the filtered two-step variational integrator has second-order accuracy in the position and velocity. 

\textbf{Energy and momentum behavior.} Figures \ref{fig:problem12} (top row) and \ref{fig:problem22} (top row) demonstrate the favorable long term behavior of energy $H$ along the numerical solution obtained by FVI. The results in Figure \ref{fig:problem12} (bottom row) illustrate that the momentum $M$ is nearly preserved by FVI over long times when the invariance  conditions \eqref{U-A} are satisfied. However, as shown in Figure \ref{fig:problem22} (bottom row), the momentum is no longer conserved when the scalar and vector potentials do not satisfy the invariance  conditions \eqref{U-A}. 

\subsection{Theoretical and numerical results in a strong magnetic field ($0<\varepsilon\ll 1$)} 
To investigate the performance of the proposed method in the strong magnetic field regime, the analysis is organized into two parts: the first establishes the theoretical properties, while the second presents numerical experiments that verify the theoretical analysis.
\subsubsection{Theoretical results in a strong magnetic field} \label{sec3.2}
This subsection gives the results of the filtered two-step variational integrator \eqref{Method}--\eqref{AV1}  for solving the CPD \eqref{charged-particle} in a strong magnetic field. Before establishing the error bounds of the filtered two-step variational integrator, we present a priori bounds for the exact solution of system \eqref{charged-particle}. For the exact velocity $v(t)=\dot x(t)$, we define 
$$
	v_\parallel(t)
	=
	\frac{B_0}{|B_0|}
	\left(
	\frac{B_0}{|B_0|}\cdot v(t)
	\right):=P_{\parallel}v(t),
	\quad \
	v_\perp(t)=v(t)-v_\parallel(t).
$$
With this notation, the following result holds. 
\begin{mytheo}
Assume that the electric field $F(x)$ 
is globally Lipschitz continuous.
If the initial value satisfy
$ 
|x(0)| + |\dot{x}(0)| \le C,
$ 
then the position $x(t)$ and the parallel velocity $v_{\parallel}(t)$, 
together with their time derivatives, are uniformly bounded
with respect to $\varepsilon$, i.e.,
\begin{equation}\label{eaxct-bound1}
	\abs{x(t)}+\abs{\dot{x}(t)} \leq C, \qquad \quad  \text{for all} \quad t\in (0,T],
\end{equation}	
and 
\begin{equation}\label{eaxctvd}
\abs{{v}_{\parallel}(t)}+\abs{\dot{v}_{\parallel}(t)} \leq C, \qquad \text{for all} \quad t\in (0,T],
\end{equation}	
where $C$ is  independent of $\varepsilon$.
\end{mytheo}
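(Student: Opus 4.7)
The plan is to establish the two bounds separately and in this order: first the full bound on $\abs{x(t)}+\abs{\dot{x}(t)}$ using an energy-type argument, and then the parallel-velocity bound using a projection argument. The guiding observation is that the only $\varepsilon$-singular contribution to \eqref{charged-particle} is the term $\dot{x}\times B_0/\varepsilon$, and this term is annihilated both by the inner product with $\dot{x}$ and by the projection $P_{\parallel}$; this is what makes the constant $C$ independent of $\varepsilon$.

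For the first bound I would take the inner product of \eqref{charged-particle} with $\dot{x}$. Since $(\dot{x}\times B(x))\cdot\dot{x}=0$ regardless of the strength of $B$, this yields
\begin{equation*}
\frac{d}{dt}\frac{\abs{\dot{x}}^2}{2}=\dot{x}^{\intercal}F(x),
\end{equation*}
which is completely independent of $\varepsilon$. The global Lipschitz assumption on $F$ gives the linear growth bound $\abs{F(x)}\le \abs{F(0)}+L\abs{x}$, and combined with the trivial estimate $\frac{d}{dt}\abs{x}^2\le \abs{x}^2+\abs{\dot{x}}^2$ and Young's inequality I would obtain a closed differential inequality
\begin{equation*}
\frac{d}{dt}\bigl(\abs{x}^2+\abs{\dot{x}}^2\bigr)\le C\bigl(1+\abs{x}^2+\abs{\dot{x}}^2\bigr).
\end{equation*}
Applying Gronwall's lemma on $[0,T]$ and invoking the initial bound \eqref{velocity} then yields \eqref{eaxct-bound1} with an $\varepsilon$-independent constant.

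For the parallel-velocity estimate I would use that $P_{\parallel}$ annihilates every vector orthogonal to $B_0$. Since $\dot{x}\times B_0$ is perpendicular to $B_0$, one has $P_{\parallel}(\dot{x}\times B_0)=0$, and hence
\begin{equation*}
\dot{v}_{\parallel}=P_{\parallel}\ddot{x}=P_{\parallel}\bigl(\dot{x}\times B_1(x)\bigr)+P_{\parallel}F(x).
\end{equation*}
Both terms on the right-hand side are uniformly bounded in $\varepsilon$: the first by the assumed uniform boundedness of $B_1(x)$ along the trajectory, which is compact by the first step, together with the bound on $\abs{\dot{x}}$; the second by the linear growth of $F$ and the bound on $\abs{x}$. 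The bound $\abs{v_{\parallel}}\le \abs{\dot{x}}\le C$ is immediate from the projection property.

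The main obstacle is conceptual rather than computational: one must recognize at the outset that each of the two quantities to be bounded admits a natural manipulation (inner product with $\dot{x}$ in the first case, projection $P_{\parallel}$ in the second) that removes the $1/\varepsilon$ term exactly, so that the Gronwall/boundedness argument that remains is entirely standard. Once this cancellation structure is identified, the global Lipschitz hypothesis on $F$ is precisely what is needed to close the energy inequality without any a priori confinement of the trajectory.
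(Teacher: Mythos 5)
Your proof is correct. For the bound on $\abs{x(t)}+\abs{\dot x(t)}$ you use essentially the same mechanism as the paper: the inner product with the velocity annihilates the magnetic term exactly, and Gronwall closes the estimate (the paper works with the first-order system and takes inner products with $x$ and $v$ separately, but the argument is the same). For the parallel velocity the two arguments genuinely diverge. You project with the constant $P_{\parallel}$, which kills only the singular part $\varepsilon^{-1}\dot x\times B_0$, and then absorb the leftover term $P_{\parallel}\bigl(\dot x\times B_1(x)\bigr)$ using the uniform bound $\abs{\dot x}\le C$ already established in the first step; this is a pointwise bound requiring no further integration. The paper instead introduces the $\varepsilon$-perturbed projection $\check P_{\parallel}(x)$ onto the direction of $B_0+\varepsilon B_1(x)$, which annihilates the \emph{entire} magnetic term $v\times B(x)$, runs a second Gronwall argument on $\abs{\check v_{\parallel}}$, and converts back via $P_{\parallel}=\check P_{\parallel}+\mathcal{O}(\varepsilon)$. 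Your route is shorter and perfectly valid here because $\abs{v}\le C$ is already available from the first step; the paper's construction is the more robust one in settings where only the parallel component, and not the full velocity, is uniformly controlled (it tolerates $\abs{v_{\perp}}=\mathcal{O}(\varepsilon^{-1})$, since that component enters only multiplied by $\dot{\check P}_{\parallel}=\mathcal{O}(\varepsilon)$), but that extra robustness is not needed under the hypotheses of this theorem.
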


\begin{proof}
We first prove the estimate \eqref{eaxct-bound1}, following arguments similar to those in~\cite{VP1}. The system \eqref{charged-particle} can be rewritten as
\begin{equation}\label{charged-particle-2}
\dot{x}(t)=v(t), \quad \ \dot{v}(t)=v(t) \times  B(x(t)) +F(x(t)).
\end{equation}	
Taking the inner product of both sides of \eqref{charged-particle-2}
with $x(t)$ and $v(t)$, respectively, and applying the Cauchy--Schwarz
inequality, we obtain
	\begin{equation*}
\frac{\mathrm{d}}{\mathrm{d}t} \abs{x(t)}^2 \leq 2\abs{x(t)}\abs{v(t)}, \quad \ \frac{\mathrm{d}}{\mathrm{d}t} \abs{v(t)}^2 \leq 2\abs{v(t)}\abs{F(x(t))}.
	\end{equation*}	
The above estimates imply
	\begin{equation*}
	\frac{\mathrm{d}}{\mathrm{d}t} \abs{x(t)} \leq 2\abs{v(t)}, \quad \ \frac{\mathrm{d}}{\mathrm{d}t} \abs{v(t)} \leq 2\abs{F(x(t))}\leq 2\abs{F(x(0))}+2C\abs{x(t)}+2C\abs{x(0)}.
\end{equation*}	
Together with the initial value $ |x(0)| + |\dot{x}(0)| \le C$,  an application of Gronwall's inequality to the above estimates yields \eqref{eaxct-bound1}.

We now turn to the proof of \eqref{eaxctvd}. We only prove $|v_{\parallel}(t)|\le C$, as the estimate for $|\dot v_{\parallel}(t)|$ is analogous. To this end, we introduce
	\begin{equation*}
	\begin{aligned}
&\check{v}_\parallel(t)
=
\frac{B_0+\varepsilon B_{1}(x(t))}{|B_0|}
\Big(
\frac{B_0+\varepsilon B_{1}(x(t))}{|B_0|}\cdot v(t)
\Big):=\check{P}_{\parallel}(x(t))v(t).
\end{aligned}
\end{equation*}	
It can be verified that $\dot{\check{P}}_{\parallel}(x(t))=\mathcal{O}(\varepsilon)$ and $\ddot{\check{P}}_{\parallel}(x(t))=\mathcal{O}(1)$, where we use the notation $\dot{\check{P}}_{\parallel}(x(t))=\frac{\mathrm{d}}{\mathrm{d}t}{\check{P}}_{\parallel}(x(t))$ and similar for $\ddot{\check{P}}_{\parallel}(x(t))$.
Differentiating $\check v_\parallel(t)$ yields
\begin{equation}\label{oridothat_v}
 \dot{\check{v}}_\parallel(t)
		=\dot{\check{P}}_{\parallel}(x(t))v(t)+\check{P}_{\parallel}(x(t))\dot{v}(t).
\end{equation}	
Substituting the expression of $\dot v(t)$ into~\eqref{oridothat_v}, we obtain
\begin{equation*}\label{dothat_v}
	\begin{aligned}
\dot{\check{v}}_\parallel(t)
&=\dot{\check{P}}_{\parallel}(x(t))v(t)+\check{P}_{\parallel}(x(t))\big(v(t) \times  B(x(t)) +F(x(t))\big)
=\dot{\check{P}}_{\parallel}(x(t))v(t)+\check{P}_{\parallel}(x(t))F(x(t)\\
&=\dot{\check{P}}_{\parallel}(x(t))\check{v}_\parallel(t)+\dot{\check{P}}_{\parallel}(x(t))\check{v}_\perp(t)+\check{P}_{\parallel}(x(t))F(x(t)),
\end{aligned}
\end{equation*}	
where we have used the identity $\check{P}_{\parallel}(x(t))(z\times (B_0+\varepsilon B_{1}(x(t))))=0$ for  all vectors $z$. Taking the inner product of both sides of the above formula
with ${\check{v}}_\parallel(t)$, and applying the Cauchy--Schwarz inequality, we obtain
\begin{equation*}\label{vpar-dot-bound}
	\frac{\mathrm{d}}{\mathrm{d}t}	\bigl|\check{v}_\parallel(t)\bigr|
	\leq
		2\big(\bigl|\dot{\check P}_{\parallel}(x(t))\bigr|\,|\check{v}_\parallel(t)|
		+\bigl|\dot{\check P}_{\parallel}(x(t))\bigr|\,|\check{v}_\perp(t)|+
		\bigl|\check P_{\parallel}(x(t))\bigr|\,|F(x(t))|\big).
\end{equation*}
Combining  the initial bound  $|\check{v}_\parallel(0)|=|\check{P}_{\parallel}(x(0))v(0)|\leq |v(0)| \leq C$ with $\dot{\check{P}}_{\parallel}(x(t))=\mathcal{O}(\varepsilon)$, an application of Gronwall's inequality yields
$\abs{\check{v}_{\parallel}(t)}\leq C.$
By the definitions of $\check{P}_{\parallel}(x(t))$ and ${P}_{\parallel}$, it follows that ${P}_{\parallel}=\check{P}_{\parallel}(x(t))+\mathcal{O}(\varepsilon)$, and hence
$\abs{{v}_{\parallel}(t)}\leq C.$
This completes the proof of \eqref{eaxctvd}.
\end{proof}

\begin{rem}
In contrast to the parallel component, the perpendicular velocity
$v_\perp(t)$ and its time derivative are not uniform for $\varepsilon$. Note that
\begin{equation*}
\begin{aligned}
\dot{v}_\perp(t)&=\dot{v}(t)-\dot{v}_\parallel(t)=v(t) \times  B(x(t)) +F(x(t))-\dot{v}_\parallel(t)=({v}_\perp(t)+{v}_\parallel(t))\times  B(x(t)) +F(x(t))-\dot{v}_\parallel(t)\\
&={v}_\perp(t)\times  B(x(t))+{v}_\parallel(t)\times  B_1(x(t)) +F(x(t))-\dot{v}_\parallel(t).
\end{aligned}
\end{equation*}	
Taking the inner product of the above equation with $v_\perp(t)$
and applying the Cauchy--Schwarz inequality, we obtain
\begin{equation*}
	\begin{aligned}
\frac{\mathrm{d}}{\mathrm{d}t}\bigl|{v}_\perp(t)\bigr|
\leq 2\bigl|{v}_\perp(t)\times  B(x(t))\bigr|+2\bigl|{v}_\parallel(t)\times  B_1(x(t))\bigr| +2\bigl|F(x(t))\bigr|+2\bigl|\dot{v}_\parallel(t)\bigr|.
\end{aligned}
\end{equation*}	
Recalling that $B(x)=\varepsilon^{-1}B_0+B_1(x)$ and applying
Gronwall's inequality, we get $\bigl|{v}_\perp(t)\bigr|\leq Ce^{CT/\varepsilon}$, which shows that $v_\perp(t)$ is not uniformly bounded with respect to
$\varepsilon$. By a similar argument, the same conclusion holds for
$\dot v_\perp(t)$.
\end{rem} 
 
Based on these properties of the exact solution, we now investigate the error of the filtered two-step variational integrator in the strong magnetic field regime.
The following theorem establishes error estimates for the numerical solution
in the position and the parallel velocity.

\begin{mytheo}\textbf{(Error bounds)}\label{error bound 2}
	For  a strong magnetic field with $0<\varepsilon\ll 1$, suppose  that \\
	(a)  The initial velocity of \eqref{charged-particle} is bounded independent of $\varepsilon$ and $h$;\\
	(b) The exact solution $x(t)$
	stays in an $\varepsilon$-independent compact set $K$ for $0 \leq t \leq T$;\\
	(c) For some $N \geq 1$, the following nonresonance conditions are assumed,
	\begin{equation}\label{non-resonance conditions}
	\abs{\sin\Big(\frac{k h}{2 \varepsilon}\Big)}\geq c>0, \quad 
	\abs{\cos\Big(\frac{k h}{2 \varepsilon}\Big)}\geq c>0,\quad  k=1,2,\ldots,N, \quad  
\end{equation}	
where $c$ is a positive constant.   When the filtered two-step variational integrator 
\eqref{Method}--\eqref{AV1} is applied to \eqref{charged-particle}, 
there exist positive constants $C^{*}$ and $c^{*}$ 
such that the global errors in the position $x$ 
and the parallel velocity $v_{\parallel}=P_{\parallel}v$ 
at time $t_{n+1/2}=(n+1/2)h \leq T$ satisfy
\begin{equation*}
	\begin{aligned}
		&\abs{x_{n+1/2}-x(t_{n+1/2})} \leq Ch^2, \qquad \quad \ \abs{v_{\parallel}^{n+1/2}-v_{\parallel}(t_{n+1/2})} \leq Ch^2, \quad \textmd{provided that} \ h^2 > C^{*}\varepsilon,\\
		&\abs{x_{n+1/2}-x(t_{n+1/2})} \leq C\varepsilon,\qquad \quad \ \ \
			\abs{v_{\parallel}^{n+1/2}-v_{\parallel}(t_{n+1/2})} \leq C \varepsilon,  \quad\  \textmd{provided that} \  c^{*}\varepsilon^2 \leq h^2 \leq C^{*}\varepsilon.
	\end{aligned}
\end{equation*}	
Moreover, the estimates also hold at time $t_{n+1}=(n+1)h \leq T$ are given by	
\begin{equation*}
	\begin{aligned}
		&\abs{x_{n}-x(t_{n})} \leq Ch^2, \qquad\qquad \	\abs{v_{\parallel}^{n}-v_{\parallel}(t_{n})} \leq Ch^2, \quad \textmd{provided that} \ h^2 > C^{*}\varepsilon,\\
			&\abs{x_{n}-x(t_{n})} \leq C\varepsilon,\qquad \quad \quad \ \ \
			\abs{v_{\parallel}^{n}-v_{\parallel}(t_{n})} \leq C \varepsilon,  \quad\  \textmd{provided that} \  c^{*}\varepsilon^2 \leq h^2 \leq C^{*}\varepsilon,
	\end{aligned}
\end{equation*}	
where the constant $C$ is independent of $\varepsilon$, $h$, and $n$,  but depends on $T$, the velocity bound, the constants $c$, $c^{*}$, $C^{*}$,  and the bounds of the derivatives of $B_1$ and $F$ on the compact set $K$.
\end{mytheo}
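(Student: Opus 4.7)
The plan is to prove Theorem~\ref{error bound 2} via a modulated Fourier expansion (MFE) of both the exact and the numerical solutions, following the strategy announced in the abstract. On the continuous side, I would posit an ansatz of the form
\begin{equation*}
x(t) = \sum_{|k|\le N} \e^{\ii k t/\varepsilon}\, z^k(t) + R_N(t),
\end{equation*}
insert it into \eqref{charged-particle}, and separate frequencies to obtain a hierarchy of modulation equations for the smooth coefficients $z^k(t)$. Standard arguments (as in \cite{Lubich2020,Lubich2022}) using the rank-$2$ structure of $\tilde{B}_0$ on the plane orthogonal to $B_0$ yield the a priori bounds $z^0 = \mathcal{O}(1)$, $z^{\pm 1} = \mathcal{O}(\varepsilon)$, and $z^k = \mathcal{O}(\varepsilon^{|k|})$ for $|k|\ge 2$, with an analogous statement for the time derivatives. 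The parallel projection $P_\parallel z^k$ is of one order higher in $\varepsilon$ for $k\neq 0$, and the guiding-center dynamics is encoded in $z^0$ together with $v_{\parallel}(t) = P_\parallel \dot z^0(t) + \mathcal{O}(\varepsilon)$.

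Next, I would derive a discrete MFE for the numerical solution by inserting the analogous ansatz $x_n = \sum_{|k|\le N} \e^{\ii k t_n/\varepsilon}\, z_h^k(t_n)$ into the two-step scheme \eqref{Method} and the velocity formula \eqref{AV1}. Evaluating the central difference $x_{n+1} - 2x_n + x_{n-1}$ on this ansatz produces, at frequency $k$, the scalar multiplier $-4\sin^2(kh/(2\varepsilon))\, z_h^k(t_n)$ together with $h$-shifted Taylor contributions from the slow dependence of $z_h^k$; the midpoint arguments $x_{n\pm 1/2}$ generate the factors $\cos(kh/(2\varepsilon))$ in the coefficients of the vector potential terms. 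The explicit form of the filter $\Psi = \psi(-h\tilde B_0/\varepsilon)$ is chosen precisely so that, in the $k=0$ mode, the principal part of the discrete equation agrees with the continuous equation for $z^0$ up to $\mathcal{O}(h^2)$; analogously, $\Phi = \phi(-h\tilde B_0/\varepsilon)$ is built so that $(v_{n+1}+v_n)/2 = \Phi (x_{n+1}-x_n)/h$ reproduces $\dot z^0$ to the same order after cancellation of the oscillatory averages. The non-resonance conditions \eqref{non-resonance conditions} guarantee that the algebraic operators acting on the non-zero modes $z_h^k$ are boundedly invertible uniformly in $h,\varepsilon$, which allows one to solve the modulation system iteratively in powers of $\varepsilon$ and to obtain bounds for $z_h^k$ with the same scaling as for $z^k$.

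The error analysis then proceeds by forming the defect obtained when the continuous MFE is inserted into the discrete scheme. Taylor expansion of the slow coefficients about $t_n$, combined with the expansions of $\psi$ and $\phi$ and the estimates on the $z^k$, shows that this defect is of size $\mathcal{O}(h^2) + \mathcal{O}(\varepsilon^{N+1}/h^2)$ at each time step, with the $\varepsilon$ gain coming from the rapid decay of the higher harmonics. Subtracting the discrete MFE from the continuous one yields a linear recursion for the mode-wise errors $e_h^k(t_n) := z^k(t_n) - z_h^k(t_n)$, driven by this defect and by the Lipschitz nonlinearities coming from $A$ and $F$; a discrete Gronwall argument on $0\le nh \le T$ then propagates the local errors to the global bound. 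Specialising to the two regimes: when $h^2 > C^*\varepsilon$, the $h^2$ defect dominates and one obtains $\mathcal{O}(h^2)$ for both $x_n$ and $v_\parallel^n$; when $c^*\varepsilon^2 \le h^2 \le C^*\varepsilon$, one instead controls the error by $\mathcal{O}(\varepsilon)$, using that the perpendicular remainder $x - z^0$ in the exact solution is already of size $\varepsilon$. The estimate on $t_{n+1}$ as opposed to $t_{n+1/2}$ is obtained from the symmetry of the scheme by a half-step interpolation. The consistent initialisation afforded by \eqref{start_x} produces starting errors that are compatible with these bounds.

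The main obstacle, as I see it, is the construction and uniform estimation of the discrete modulation coefficients $z_h^k$ in both regimes simultaneously. The delicate point is that $\tilde B_0$ is rank $2$, so the parallel direction is only weakly constrained: without careful separation via the projector $P_\parallel$ and without the specific choice of the tangent-type filters $\psi$ and $\phi$, the parallel components would degrade to $\mathcal{O}(\varepsilon)$ even in the large-stepsize regime. Making the cancellation mechanism explicit, ensuring the filter-induced remainders contribute the right orders in $\varepsilon$ and $h$ uniformly under the non-resonance conditions, and keeping the perpendicular $\mathcal{O}(\varepsilon)$ error from polluting the parallel $\mathcal{O}(h^2)$ estimate, will be the most technically demanding portion of the proof.
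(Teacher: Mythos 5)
Your proposal follows essentially the same route as the paper: modulated Fourier expansions of both the exact and the numerical solution, matching of the leading modulation equations and their initial data (enabled by the filters under the non-resonance conditions), and a Gronwall-type propagation, with the two step-size regimes handled exactly as you describe. The only notable imprecision is that the decisive cancellation produced by $\Psi=\psi(-h\tilde B_0/\varepsilon)$ occurs in the $k=\pm1$ perpendicular modes, where it forces the discrete modulation equation for $\xi_{\pm1}^{\pm1}$ to coincide with the exact one for $y_{\pm1}^{\pm1}$ up to $\mathcal{O}(h^2)$, rather than in the $k=0$ mode, on which $P_0\Psi=P_0$ acts trivially.
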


\begin{rem}
The error estimates are  derived from a comparison between the modulated Fourier expansions of the numerical solution (see Section~\ref{MFE-method}) and the exact solution (see Section~\ref{MEF-exact}). In addition, for the perpendicular velocity $v_{\perp}=P_{\pm1}v$, the modulated Fourier expansion analysis shows that the error remains of order $\mathcal{O}(1)$ for both step-size regimes.
\end{rem}

\begin{rem}
In the small-step regime $h^2 < c^{*}\varepsilon^2$, the position and parallel velocity errors are observed to be of order $h^2/\varepsilon$, while the perpendicular velocity error is of order $h^2/\varepsilon^2$, as illustrated by the numerical results in Problem~3 (see Figure~\ref{fig:problem33}).
Such refined estimates are not accessible by the present modulated Fourier expansion analysis. 
Their derivation would require improved error techniques as in 
\cite{WZ,WJ23,Bao2024,FengSchratz2024,Yin2025}, 
and is beyond the scope of this paper. 
This regime will be investigated in a subsequent work.
\end{rem}

In the follows, it is shown that the  filtered two-step variational integrator \eqref{Method}--\eqref{AV1}  has good conservation properties for a strong magnetic field. We first show the result on the  long-time  near-conservation of energy.
\begin{mytheo}\textbf{(Energy near-conservation)}\label{conservation-H2}
Under the  assumptions given in Theorem \ref{error bound 2}, it is further supposed  that the numerical solution $x_n$ stays in a compact set $K$ which is independent of $\varepsilon$ and $h$. Then there exists a positive constant $C^{*}$, independent of $\varepsilon$ and $h$, such that the integrator \eqref{Method}--\eqref{AV1} conserves the following energy over long times:
	\begin{equation*}
    \begin{aligned}
		&\abs{H(x_{n+1/2}, v_{n+1/2})-H(x_{1/2}, v_{1/2})} \leq Ch \quad  \textmd{for}\ \  0\leq nh \leq  c\min(h^{-M},\varepsilon^{-N}), \ \  \textmd{provided that} \ h^2 > C^{*}\varepsilon,\\ 
& \abs{H(x_{n+1/2}, v_{n+1/2}) - H(x_{1/2}, v_{1/2})} \leq C\min\{h^2/\varepsilon^2,\varepsilon\} \quad  \textmd{for}\ \  0\leq nh \leq    c\varepsilon^{-N}, \ \ \textmd{provided that} \ h^2 \leq C^{*}\varepsilon,\\
\end{aligned}
	\end{equation*}	
where  $M>N$  are arbitrary positive integers and $c$ is independent of $\varepsilon$ and $h$. The constant $C$ is independent of $\varepsilon$, $n$ and $h$, but depends on $M$, $N$ and $K$,  on  bounds of derivatives of $B_1$ and $F$ and on the initial velocity bound.
\end{mytheo}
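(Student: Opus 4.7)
The plan is to prove the long-time near-conservation of the energy $H$ by combining the modulated Fourier expansion (MFE) of the numerical trajectory with a Noether-type/backward-error argument, in the spirit of the analysis developed for trigonometric and filtered integrators by Hairer and Lubich. The argument has to be carried out twice, once in each step-size regime, with the same skeleton but different expansion parameters.

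First, I would start from the MFE of the numerical solution constructed in Section \ref{sec5}, writing the numerical trajectory in the form
\begin{equation*}
x_n = y(t_n) + \sum_{0<|k|\leq N} \mathrm{e}^{\mathrm{i} k t_n / \varepsilon}\, z^k(t_n) + R_N(t_n),
\end{equation*}
where the drift $y$ and the modulation functions $z^k$ depend smoothly on $t$ with $\varepsilon$-independent bounds on their derivatives (this smoothness is where the non-resonance condition \eqref{non-resonance conditions} is consumed), and the remainder $R_N$ is controlled on intervals of length $\mathcal{O}(1)$. Both regimes distinguished in the theorem correspond to different re-expansions of $y$ and $z^k$: a pure $h$-expansion for $h^2>C^{*}\varepsilon$, and an $\varepsilon$-expansion (with $h/\varepsilon$ kept as a moderately small quantity) for $h^2\le C^{*}\varepsilon$.

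Second, using that the scheme arises from a discrete Lagrangian, the modulation system itself is the Euler--Lagrange system of a modulated discrete action $\mathcal{S}_h[y,z^k,\bar z^k]$. Two continuous symmetries of $\mathcal{S}_h$ are exploited: the $\mathrm{U}(1)$ action $z^k \mapsto \mathrm{e}^{\mathrm{i} k \theta}z^k$, coming from the fact that the MFE is constructed equivariantly in the fast phase, and the time-translation action inherited from the autonomous Lagrangian \eqref{L(x,v)}. Applying a discrete Noether argument to $\mathcal{S}_h$ yields two formal almost-invariants: a magnetic-moment-like quantity $\mathcal{I}_h$ (used in Theorem \ref{conservation-M2}) and an energy-like quantity $\mathcal{H}_h[y,z^k,\dot y,\dot z^k]$ whose leading term reproduces $H(x_{n+1/2},v_{n+1/2})$ up to $\mathcal{O}(h)$ (respectively $\mathcal{O}(\min\{h^2/\varepsilon^2,\varepsilon\})$). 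The specific filters $\psi=\operatorname{tanch}(\cdot/2)$ and $\phi=1/\operatorname{sinch}(\cdot/2)$ chosen in Step 3 of Section \ref{sec2} are needed precisely so that the coefficients of $z^k\bar z^k$ and of $\dot z^k\bar{\dot z}^k$ in $\mathcal{S}_h$ line up into the same quadratic form, which is what turns the formal symmetry into an honest invariance identity.

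Third, a direct computation on the truncated modulation system, paralleling the MFE chapter of Hairer--Lubich--Wanner, gives $\tfrac{\mathrm{d}}{\mathrm{d}t}\mathcal{H}_h=\mathcal{O}(h^{N+1})$ with a constant depending only on the bounds of $B_1,F$ on $K$. Evaluated at grid points and summed, this yields $|\mathcal{H}_h(t_n)-\mathcal{H}_h(0)|\le Ch^N$ on any window of length $\mathcal{O}(1)$, which combined with the identification of $\mathcal{H}_h$ with $H(x_{n+1/2},v_{n+1/2})$ above delivers the stated short-time bound. Note that the short-time bound $Ch$ rather than $Ch^2$ in the regime $h^2>C^{*}\varepsilon$ is consistent with, and in fact explained by, the $\mathcal{O}(1)$ error in $v_\perp$ flagged in the remark after Theorem \ref{error bound 2}: only the time-averaged contribution of $|v_\perp|^2$ enters the modified energy at higher order, and that average leaks into $\mathcal{H}_h$ only at order $h$. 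The long time scales $nh\le c\min(h^{-M},\varepsilon^{-N})$ and $nh\le c\varepsilon^{-N}$ are then obtained by patching: the MFE is reconstructed on each window from $(x_n,x_{n+1})$, which lies within $\mathcal{O}(h^{N})$ of the MFE at the endpoint of the previous window, and a backward-error step upgrades the polynomial truncation error to an exponentially small one, justifying the choice of arbitrarily large $M,N$.

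The main obstacle will be the second step: constructing the modulated action $\mathcal{S}_h$ so that the filter matrices $\Psi,\Phi$ preserve the two continuous symmetries exactly, and verifying that the associated Noether identity produces a single scalar quantity that (i) is well-defined through the non-resonance condition, and (ii) agrees with $H$ to the required order in each regime. A secondary difficulty is the uniform handling of the transition at $h^2\sim\varepsilon$: the same MFE machinery must yield an $\mathcal{O}(h)$ bound when the $h$-expansion dominates and an $\mathcal{O}(\min\{h^2/\varepsilon^2,\varepsilon\})$ bound when the $\varepsilon$-expansion dominates, which forces one to keep track of the $\varepsilon$-scaling of individual modulation coefficients rather than only of the final remainder $R_N$.
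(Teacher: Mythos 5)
Your overall strategy coincides with the paper's: a modulated Fourier expansion of the numerical solution, two formal almost-invariants obtained from the invariance properties of the extended potentials (the paper derives the energy-like invariant by multiplying the modulation equations by $(\dot z^k)^{\ast}$ and summing, and the moment-like one from the $\mathrm{U}(1)$ action $z^k\mapsto e^{ik\lambda}z^k$, which is exactly your Noether pairing), identification of the leading part of $\mathcal{H}_h$ with $H(x_{n+1/2},v_{n+1/2})$ up to $\mathcal{O}(h)$ (where, as you correctly anticipate, the specific filters $\psi$ and $\phi$ are what make the $|\xi_1^1|^2$ terms from the kinetic, vector-potential and velocity-reconstruction contributions collapse into the single coefficient $\varepsilon^{-2}\sec^2(h/2\varepsilon)$), and finally the window-by-window patching of expansions with different starting data to reach the time scale $c\min(h^{-M},\varepsilon^{-N})$.

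There is, however, one genuine gap: your account of where the bound $C\min\{h^2/\varepsilon^2,\varepsilon\}$ in the regime $h^2\le C^{*}\varepsilon$ comes from. You assert that the same MFE machinery, with careful tracking of the $\varepsilon$-scaling of the modulation coefficients, will produce this minimum. It will not: the modulated Fourier expansion delivers the $\mathcal{O}(\varepsilon)$ branch (the mismatch between $\mathcal{H}_h$ and $H$ is governed by the size of the oscillatory coefficients, which is $\mathcal{O}(\varepsilon)$), but it cannot see the $\mathcal{O}(h^2/\varepsilon^2)$ branch. The paper obtains that branch by a separate backward error analysis in the sub-regime $h^2<c^{*}\varepsilon^2$, carried out exactly as in the moderate-field case of Section 4 but with the frequency $1/\varepsilon$ now entering the modified-energy expansion, so that the $\mathcal{O}(h^2)$ deviation of the moderate case becomes $\mathcal{O}(h^2/\varepsilon^2)$; the stated bound is then the minimum of the two independent estimates. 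Your only invocation of backward error analysis is to ``upgrade the polynomial truncation error to an exponentially small one,'' which is neither needed (the theorem only claims power-law time scales with arbitrary $M,N$) nor a substitute for this second, independent estimate. Relatedly, your claim that $\frac{\mathrm{d}}{\mathrm{d}t}\mathcal{H}_h=\mathcal{O}(h^{N+1})$ conflates the two sources of the time-scale restriction: the defect of the truncated modulation system is $\mathcal{O}(\varepsilon^{N})$ (giving $\varepsilon^{-N}$), while the $h^{-M}$ restriction comes from the remainder bounds of the expansion itself; both must be tracked to justify $\min(h^{-M},\varepsilon^{-N})$.
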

It is well known that the magnetic moment is an adiabatic invariant, the long time near-conservation of magnetic moment  can also be obtained for the filtered two-step variational integrator \eqref{Method}--\eqref{AV1} by the following theorem.
\begin{mytheo}\textbf{(Magnetic moment near-conservation)}\label{conservation-M2}
Under the conditions of Theorem~\ref{conservation-H2}, the numerical solution satisfies the following near-conservation of the magnetic moment:
	\begin{equation*}
    \begin{aligned}
		&\abs{I(x_{n+1/2}, v_{n+1/2})-I(x_{1/2}, v_{1/2})} \leq Ch \quad \textmd{for}\  0\leq nh \leq  c\min(h^{-M},\varepsilon^{-N}), \ \ \textmd{provided that}\  h^2 >C^{*}\varepsilon,\\
          & \abs{I(x_{n+1/2}, v_{n+1/2}) - I(x_{1/2}, v_{1/2})} \leq C\varepsilon  \quad \textmd{for}\  0\leq nh \leq   c\varepsilon^{-N}, \ \ \textmd{provided that} \  h^2 \leq C^{*}\varepsilon,
\end{aligned}
	\end{equation*}	
where  $M>N$  are arbitrary positive integers and $c$ is independent of $\varepsilon$ and $h$. The constant $C$ is independent of $\varepsilon$, $n$ and $h$, but depends on $M$, $N$ and $K$,  on  bounds of derivatives of $B_1$ and $F$ and on the initial velocity bound.
\end{mytheo}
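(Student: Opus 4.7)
The plan is to mirror the strategy of the energy proof (Theorem~\ref{conservation-H2}), but to replace the time-translation symmetry by the rotational $S^{1}$-symmetry around the axis $B_0$, which is the geometric origin of the magnetic moment as an adiabatic invariant. As for the energy, the two step-size regimes $h^{2}>C^{*}\varepsilon$ and $h^{2}\leq C^{*}\varepsilon$ are treated by combining a modulated Fourier expansion (MFE) with backward error analysis in the large-step case, while the small-step case relies on the MFE alone.

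The first step is to carry over the MFE of the numerical solution from Section~\ref{MFE-method}, written as
\[
x_n \sim \sum_{|k|\leq N} \mathrm{e}^{\mathrm{i} k t_n/\varepsilon}\, z^{k}(t_n), \qquad v_n \sim \sum_{|k|\leq N} \mathrm{e}^{\mathrm{i} k t_n/\varepsilon}\, w^{k}(t_n),
\]
and to show that the modulation system admits a formal Lagrangian structure. This is precisely where the filter $\phi$ in \eqref{MVF-phi} plays its role: it renders the velocity assignment \eqref{AV1} compatible with the variational origin of \eqref{Method} so that the modulated action is well defined. The second step is to exploit the $S^{1}$-invariance of this modulated Lagrangian under the rotation $x\mapsto\mathrm{e}^{\tau\tilde B_{0}}x$, which acts componentwise on the modulation coefficients as $z^{k}\mapsto\mathrm{e}^{\tau\tilde B_{0}}z^{k}$. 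Since $\Psi$ and $\Phi$ commute with $\tilde B_{0}$ (both being polynomials in $\tilde B_{0}^{2}$), the modulated action is invariant, and a discrete Noether-type argument on the modulation equations produces an almost-invariant $\mathcal{I}[z,w]$ of the modulated flow, preserved up to an $\mathcal{O}(h\varepsilon^{N})$ defect per step. A term-by-term expansion in powers of $\varepsilon$ then identifies the leading part of $\mathcal{I}[z,w]$, evaluated along $(x_n,v_n)$, with the magnetic moment $I(x_n,v_n)$ up to an $\mathcal{O}(\varepsilon)$ correction.

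The third step is the long-time extension. In the regime $h^{2}>C^{*}\varepsilon$, the zero-mode (drift) component of the MFE is a smooth, non-oscillatory system to which backward error analysis applies: the modified Hamiltonian has an associated rotation-invariant quantity conserved up to $\mathcal{O}(h^{M+1})$ per step, so that summing the $\mathcal{O}(h^{M+1})$ backward-error defect together with the $\mathcal{O}(\varepsilon^{N+1})$ MFE defect over $n\leq c\min(h^{-M},\varepsilon^{-N})/h$ steps yields the claimed $\mathcal{O}(h)$ bound. In the regime $h^{2}\leq C^{*}\varepsilon$, the MFE defect alone accumulated over $n\leq c\varepsilon^{-N}/h$ steps yields the $\mathcal{O}(\varepsilon)$ bound.

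The main obstacle will be step two, namely showing that the almost-invariant $\mathcal{I}[z,w]$ of the modulated Lagrangian agrees with the magnetic moment $I(x,v)=|v\times B(x)|^{2}/(2\varepsilon\,|B(x)|^{3})$ to the required order in $\varepsilon$. Because $I$ involves $B(x)=B_{0}/\varepsilon+B_{1}(x)$ rather than $B_{0}$ alone, the $B_{1}$-contributions enter through the drift mode $z^{0}$ and through the $\pm 1$ modes coupled by $B_{1}'$, and these couplings must be tracked carefully; the specific form of the filter $\phi$ is what prevents spurious $\mathcal{O}(h^{2}/\varepsilon^{2})$ terms from polluting this identification. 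A secondary technical point is the uniformity of all constants in the non-resonance constant $c$ of \eqref{non-resonance conditions} and the matching of the two bounds at the crossover $h^{2}\sim\varepsilon$, both of which are inherited from the corresponding analysis in Theorem~\ref{conservation-H2}.
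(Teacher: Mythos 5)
Your overall architecture (modulated Fourier expansion of the numerical solution, an almost-invariant of the modulation system, identification of its leading part with $I$, and patching over subintervals of length $h$) coincides with the paper's. The genuine gap is in your step two: the symmetry you invoke is the wrong one. You propose the spatial rotation $x\mapsto e^{\tau\tilde B_{0}}x$, acting on the modulation coefficients as $z^{k}\mapsto e^{\tau\tilde B_{0}}z^{k}$. The extended potentials $\mathcal{U}(\mathbf{z})$ and $\mathcal{A}(\mathbf{z})$ built from $U$ and $A$ are invariant under this rotation only if $U$ and $A$ themselves are rotationally invariant about the $B_{0}$ axis; no such hypothesis appears in Theorem~\ref{conservation-M2} (it is precisely the invariance condition \eqref{U-A} of Theorem~\ref{conservation-M1}, and the Noether invariant it generates is the momentum $M$, not the magnetic moment $I$). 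The adiabatic invariance of $I$ originates instead in the fast gyration phase: the correct transformation is the phase shift $S(\lambda)\mathbf{z}=(e^{ik\lambda}z^{k})_{k\in\mathbb{Z}}$, under which $\mathcal{U}(S(\lambda)\mathbf{z})=\mathcal{U}(\mathbf{z})$ and $\mathcal{A}(S(\lambda)\mathbf{z})=S(\lambda)\mathcal{A}(\mathbf{z})$ hold for \emph{arbitrary} $U$ and $A$, because every term of the extended potentials carries total mode index $s(\alpha)=0$ (resp.\ $k$). Concretely, the paper multiplies the modulation equations \eqref{modulation functions} by $-\mathrm{i}k(z^{k})^{\ast}/\varepsilon$ and sums over $k$; the real part of the resulting identity is a total time derivative, yielding $\mathcal{I}[\mathbf{z}]$ with dominant part $\varepsilon^{-2}\sec^{2}(h/2\varepsilon)\abs{\xi_{1}^{1}}^{2}$, which matches $I(x_{n+1/2},v_{n+1/2})$ up to $\mathcal{O}(h)$ (not $\mathcal{O}(\varepsilon)$ as you claim). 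The two transformations agree only on the dominant components $\xi_{\pm1}^{\pm1}$ and differ on, e.g., $\xi_{\pm1}^{0}$ and $\xi_{0}^{\pm1}$, so they cannot be substituted for one another.

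A secondary problem is your long-time argument in the regime $h^{2}>C^{*}\varepsilon$: the magnetic moment lives in the oscillatory modes $\xi_{\pm1}^{\pm1}$, not in the drift mode, so a backward error analysis of the smooth zero-mode system cannot control it. The paper reaches the time scale $c\min(h^{-M},\varepsilon^{-N})$ by patching the MFE almost-invariant over subintervals exactly as in the energy proof of Theorem~\ref{conservation-H2}; backward error analysis is used only for the energy estimate in the very-small-step regime $h^{2}<c^{*}\varepsilon^{2}$ and plays no role in the magnetic-moment proof.
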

\begin{rem}
Under the condition $h^2 \le C^{*}\varepsilon$, the near-conservation result for the magnetic moment
in Theorem~\ref{conservation-M2} differs from the  energy estimate in Theorem~\ref{conservation-H2}. This difference arises from the adiabatic invariance of the magnetic moment, which is conserved up to $\mathcal{O}(\varepsilon)$ over long times $t \le \varepsilon^{-N}$ for any $N>1$; see, e.g., \cite{Northrop,Benettin1994,Hairer2018}. Moreover, Theorem~\ref{conservation-M2} shows that the filtered two-step variational integrator exhibits this behavior by preserving the magnetic moment with $\mathcal{O}(\varepsilon)$ accuracy
over times $0\leq nh \leq   c\varepsilon^{-N}$, as established by the modulated Fourier expansion analysis.
This is consistent with the adiabatic invariance of the exact solution.\end{rem}

\subsubsection{ Numerical experiments in a strong magnetic field} \label{sec3.2.1}
In what follows, we present two numerical experiments to illustrate the behavior of the proposed method in a strong magnetic field. For comparison, we consider the BORIS, TSM, and FVARM schemes introduced in Section~\ref{sec3.1.1}.
	
\noindent\vskip3mm \noindent{Problem 3. \textbf{(Strong magnetic field)}}
Solve the charged-particle dynamics \eqref{charged-particle} under a strong
 magnetic field 
$B(x)=\nabla_x \times A(x)=\big(x_2(x_3-x_2),x_1(x_1-x_3),x_3(x_2-x_1)+1/\varepsilon\big)^{\intercal}$, where the vector potential is defined as 
$A(x)=-(x_2,-x_1,0)^{\intercal}/(2\varepsilon)+x_{1}x_{2}x_{3}(1,1,1)^{\intercal}$. The scalar potential is given by $U(x)=\abs{x}^2/2$.
We choose the initial values $x(0)=(0.3,0.2,-1.4)^{\intercal}$, $v(0)=(-0.7,0.08,0.2)^{\intercal}$ and consider it on $[0, \pi/2]$.  To illustrate the performance of the method under the three step-size regimes  $h^2 > C^{*}\varepsilon$, 
$c^{*}\varepsilon^2 \le h^2 \le C^{*}\varepsilon$, 
and $h^2 < c^{*}\varepsilon^2$,  we present both global errors and long-time behavior. Figures \ref{fig:problem31}--\ref{fig:problem33} display the global errors in $x$, $v_{\parallel}$, and $v_{\perp}$ for varying values of $\varepsilon$ and stepsizes $h$. Figures \ref{fig:problem34} and \ref{fig:problem35} show the energy and magnetic moment errors over the interval $[0,10000]$ for different values of $\varepsilon$ and $h$.

 \begin{figure}[H]
 	\centering\tabcolsep=0.5mm
 	\begin{tabular}
 		[l]{ll}
 		\includegraphics[width=3.8cm,height=3.2cm]{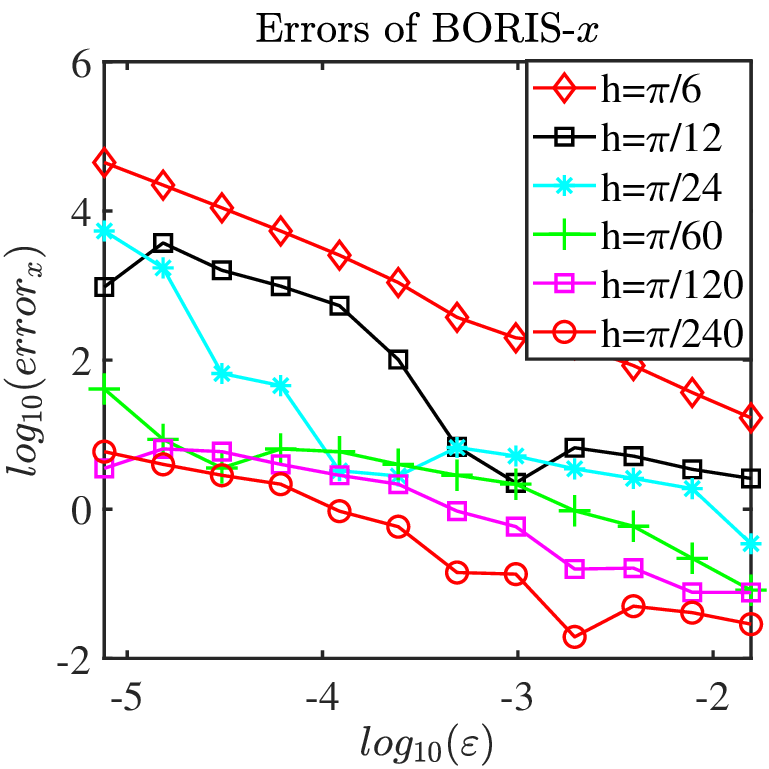}\quad \ \
 		\includegraphics[width=3.8cm,height=3.2cm]{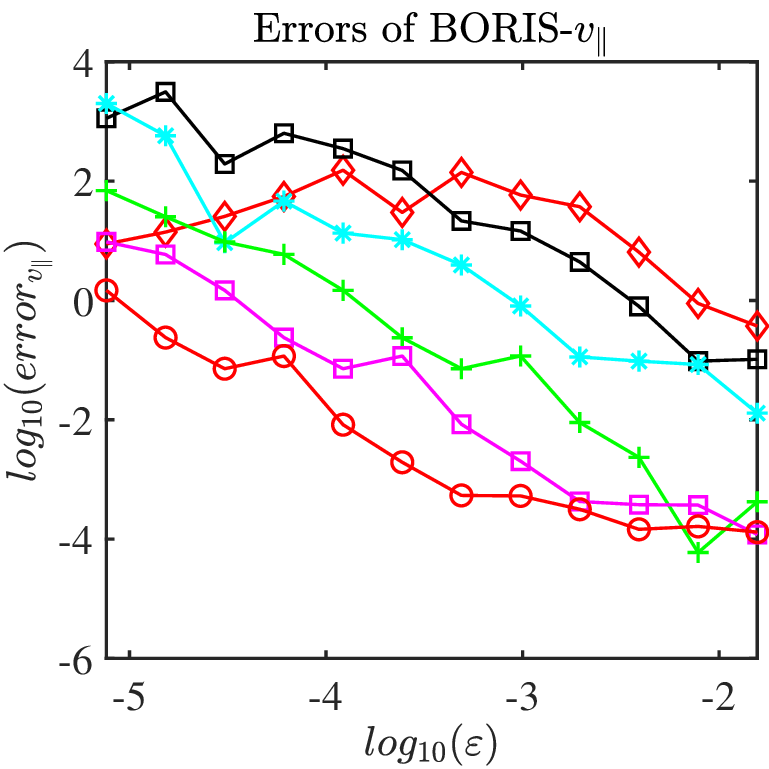}\quad \ \
 		\includegraphics[width=3.8cm,height=3.2cm]{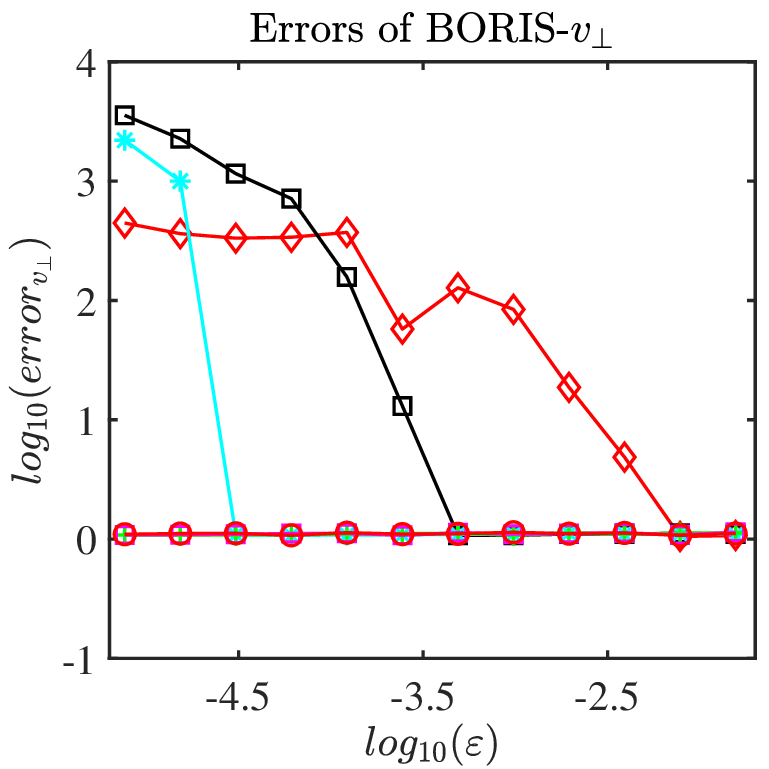} \\
 			\includegraphics[width=3.8cm,height=3.2cm]{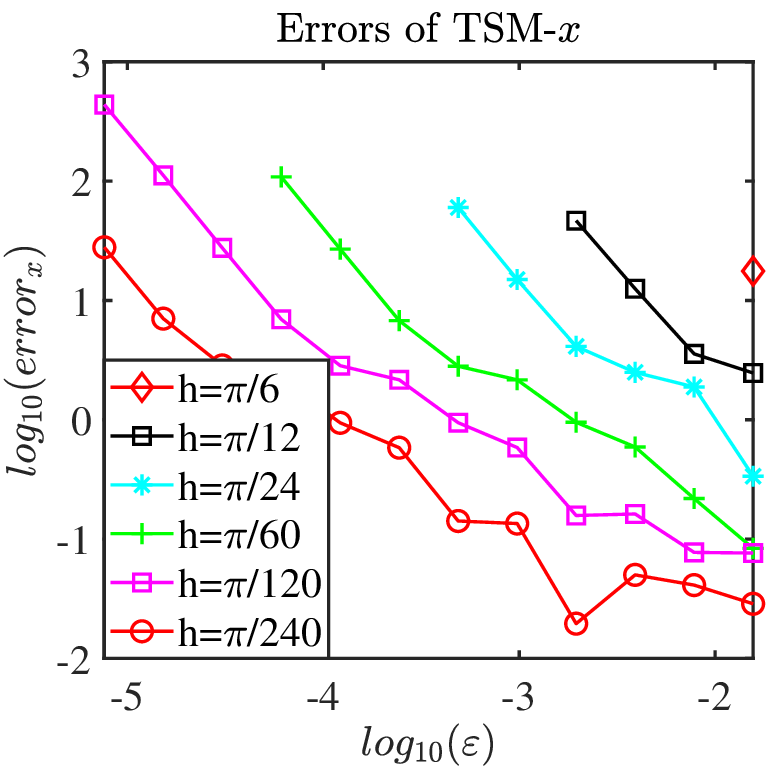}\quad \ \
 			\includegraphics[width=3.8cm,height=3.2cm]{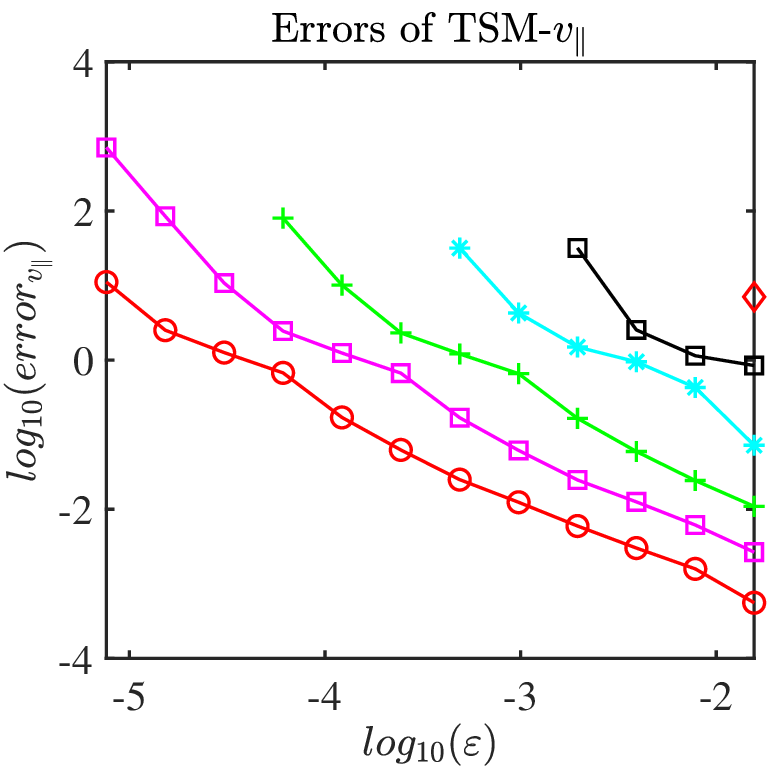}\quad \ \
 			\includegraphics[width=3.8cm,height=3.2cm]{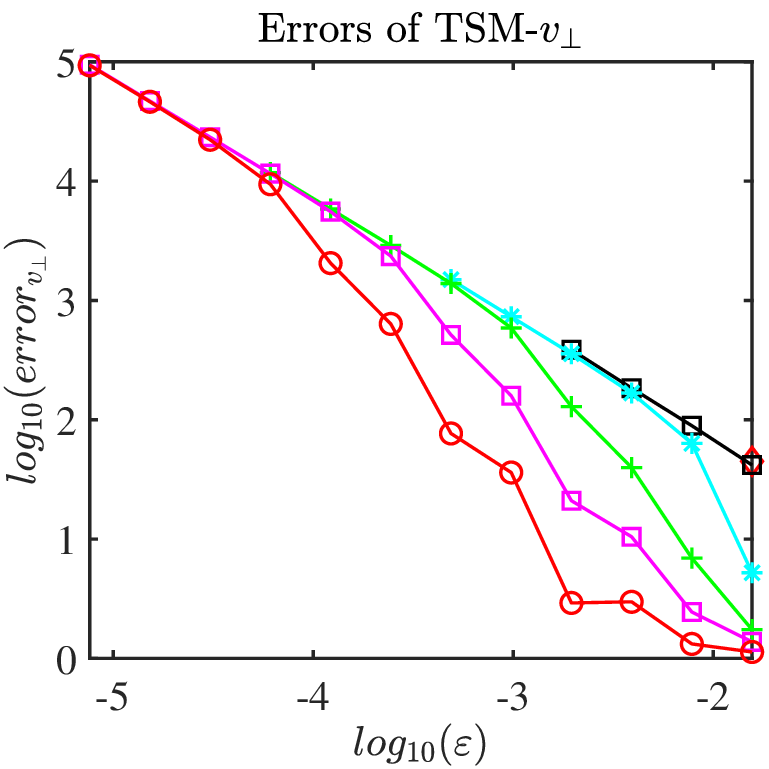}\\
 			\includegraphics[width=3.8cm,height=3.2cm]{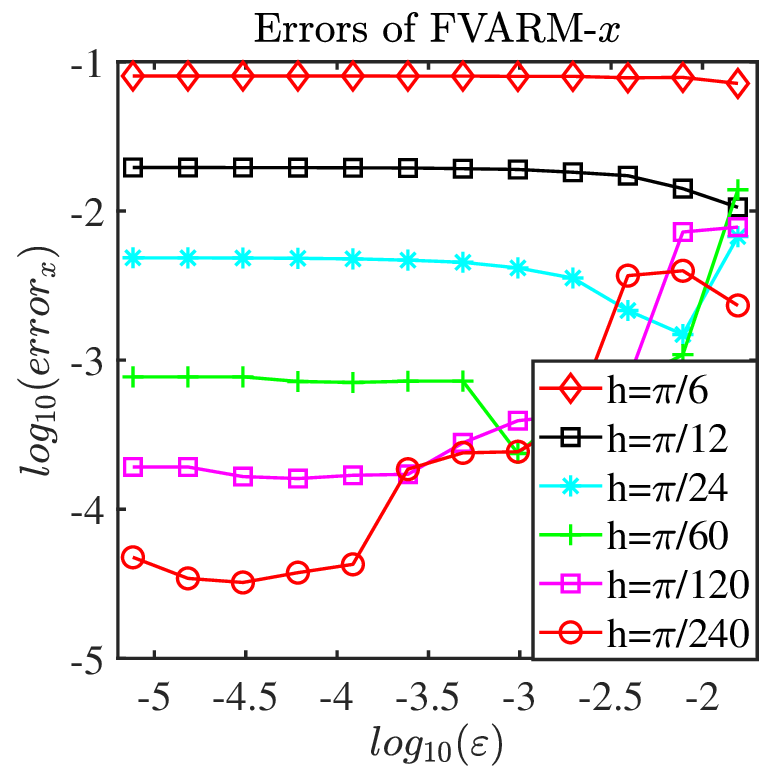}\quad \ \
 			\includegraphics[width=3.8cm,height=3.2cm]{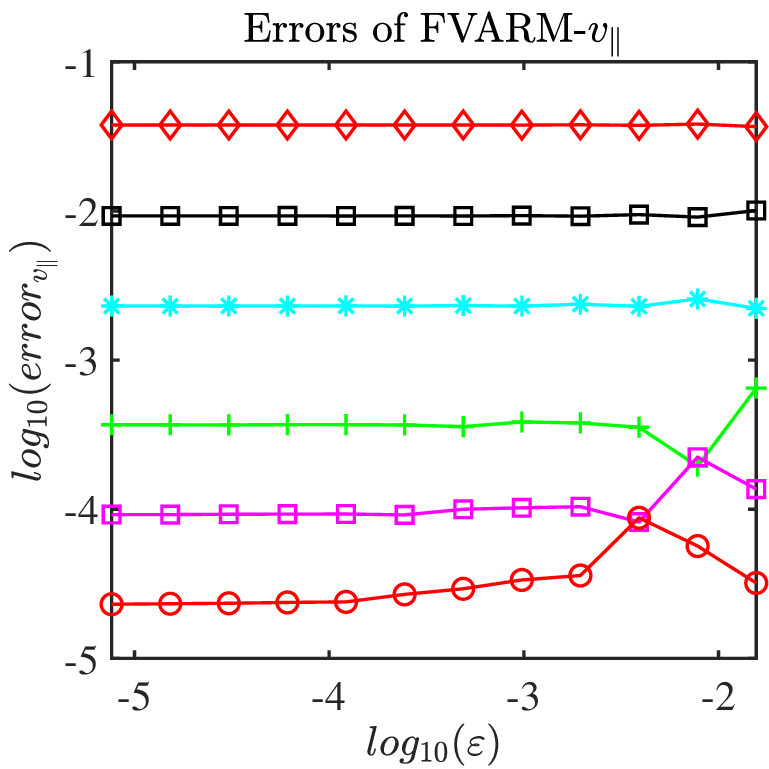}\quad \ \
 			\includegraphics[width=3.8cm,height=3.2cm]{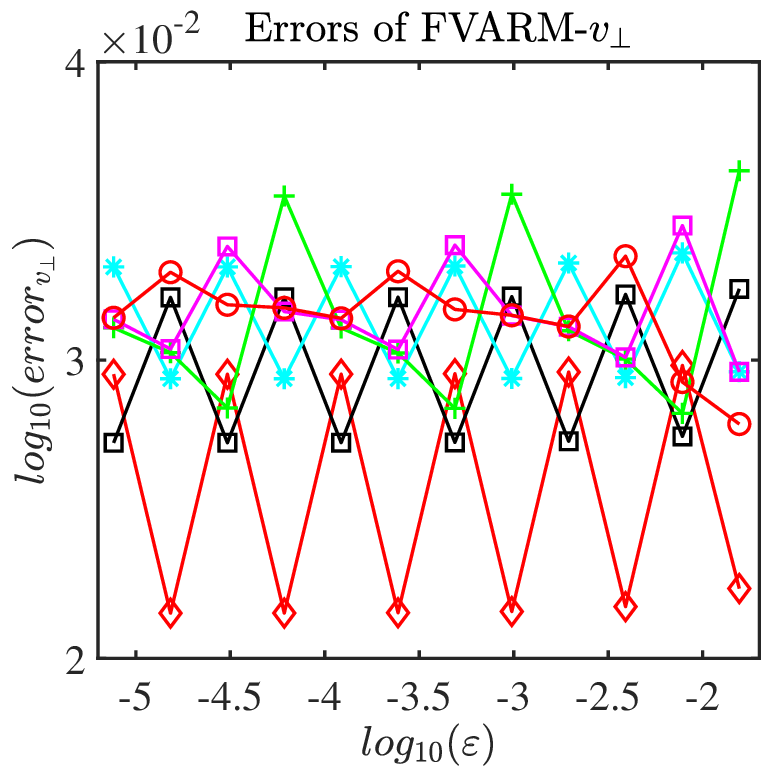}\\
 			\includegraphics[width=3.8cm,height=3.2cm]{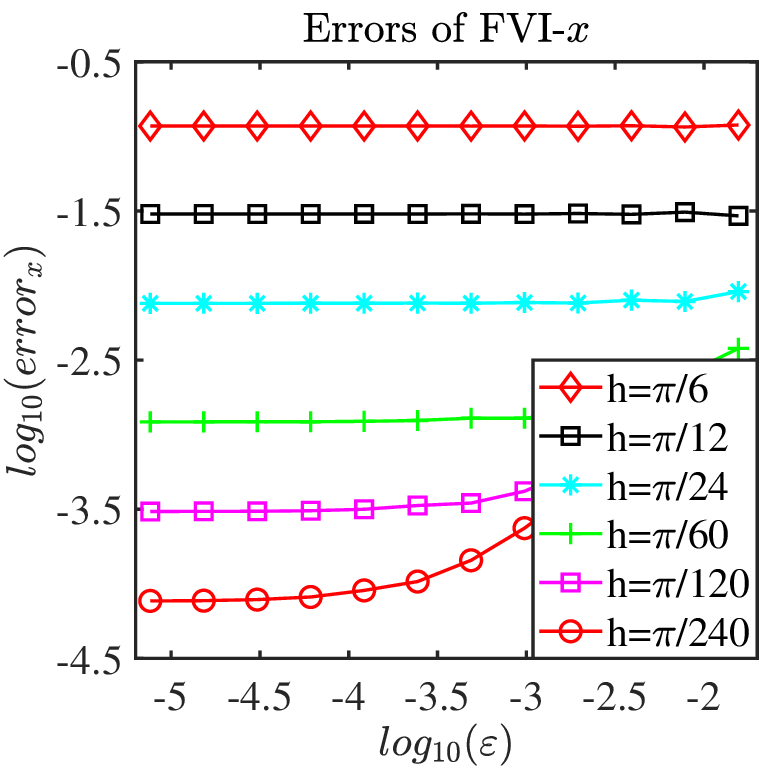}\quad \ \
 			\includegraphics[width=3.8cm,height=3.2cm]{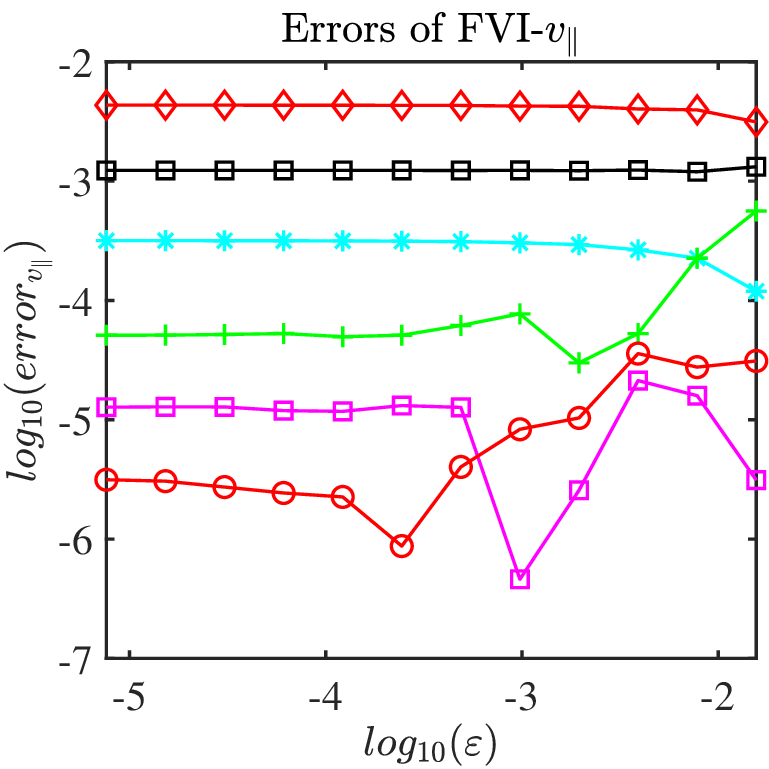}\quad \ \
 		\includegraphics[width=3.8cm,height=3.2cm]{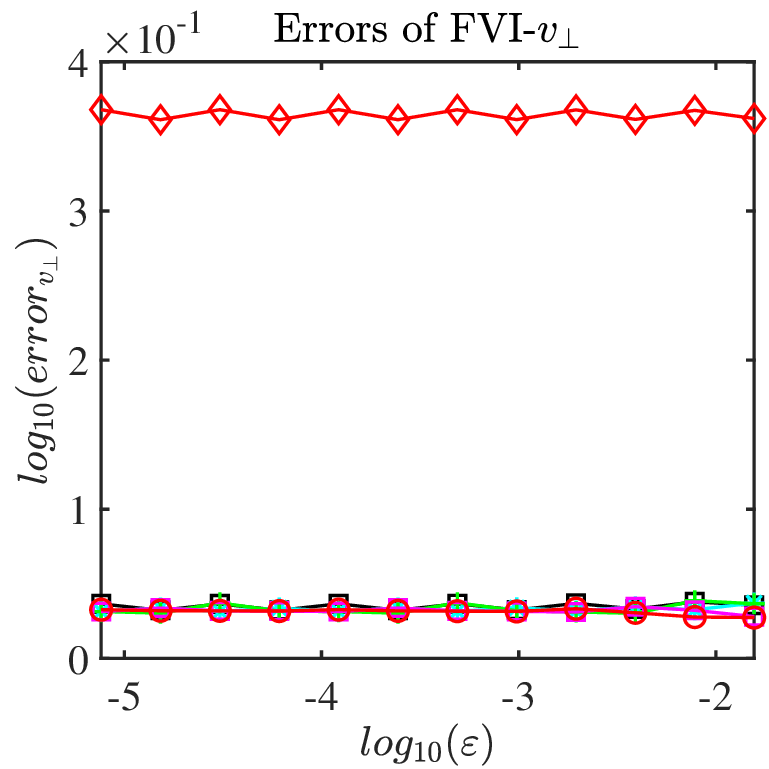}	
 	\end{tabular}
 	\caption{ Problem 3. The global errors in $x$, $v_{\parallel}$  and $v_{\perp}$ at time $t=\pi/2$ vs. $\varepsilon$ ($\varepsilon=1/2^{k}, k=6,\ldots,17 $) with different $h$. }
 	\label{fig:problem31}
 \end{figure}

 \begin{figure}[H]
 	\centering\tabcolsep=0.5mm
 	\begin{tabular}
 		[c]{ccc}
 		\includegraphics[width=3.8cm,height=3.2cm]{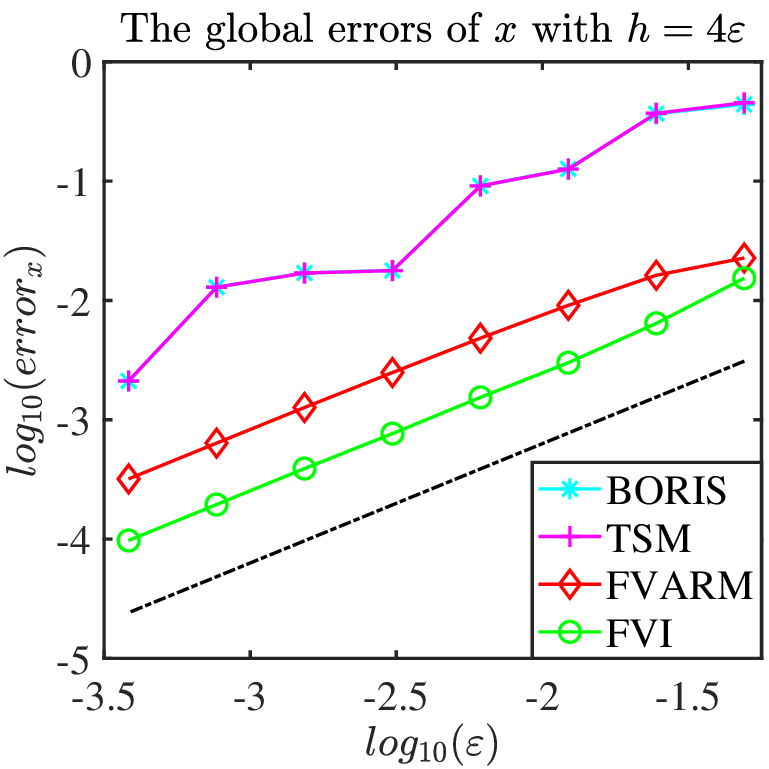}\quad \ \
 		\includegraphics[width=3.8cm,height=3.2cm]{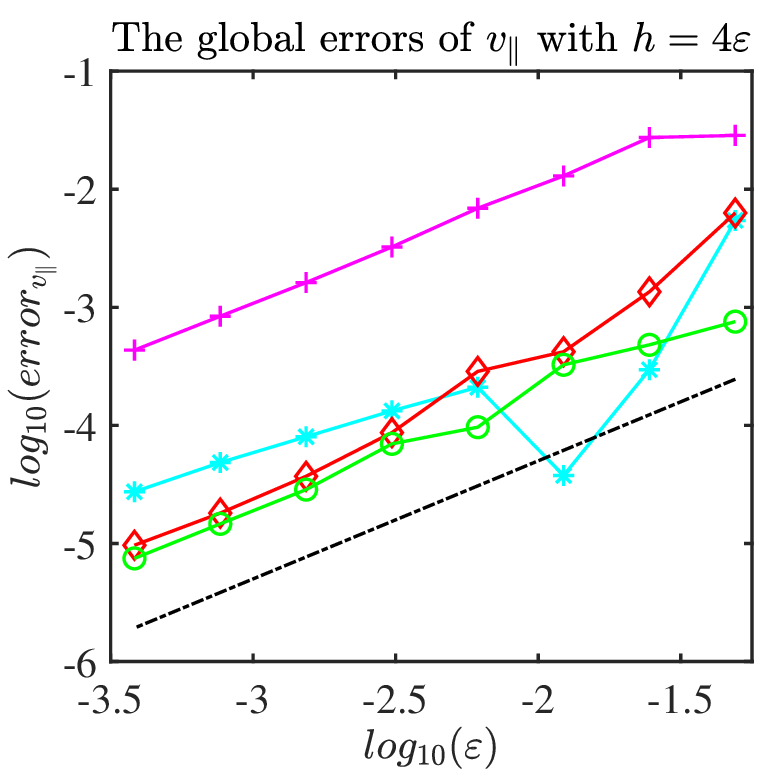}\quad \ \
 		\includegraphics[width=3.8cm,height=3.2cm]{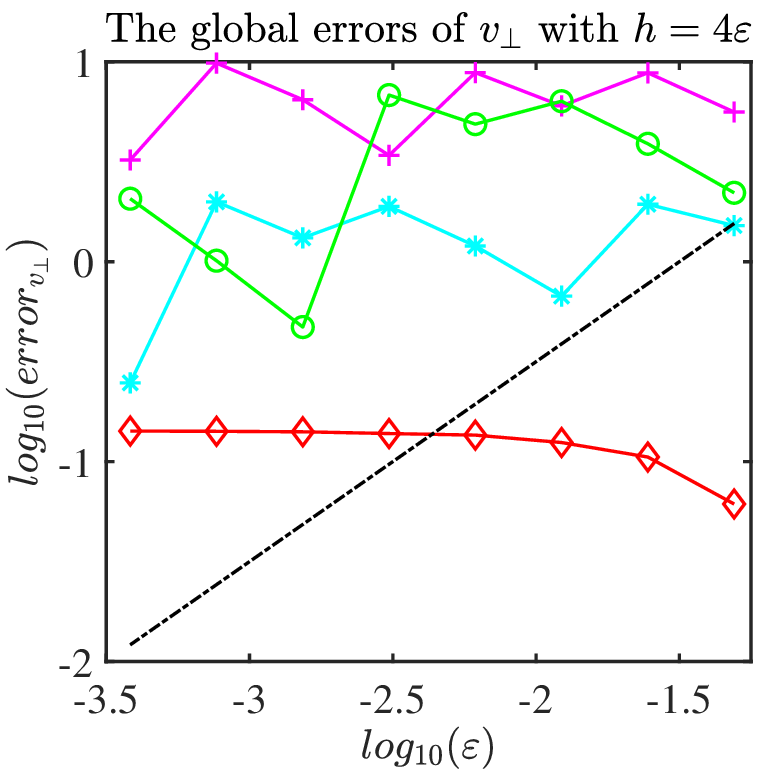}
 	\end{tabular}
 	\caption{Problem 3. The global errors in $x$, $v_{\parallel}$  and $v_{\perp}$ at time $t=\pi/2$ vs. $\varepsilon$ ($\varepsilon=\pi/2^k$, $k=6,\ldots,13$ ) with different $h$ (the dash-dot line is slope one).}
 	\label{fig:problem32}
 \end{figure}

 \begin{figure}[H]
 	\centering\tabcolsep=0.5mm
 	\begin{tabular}
 		[c]{ccc}
   		\includegraphics[width=3.8cm,height=3.2cm]{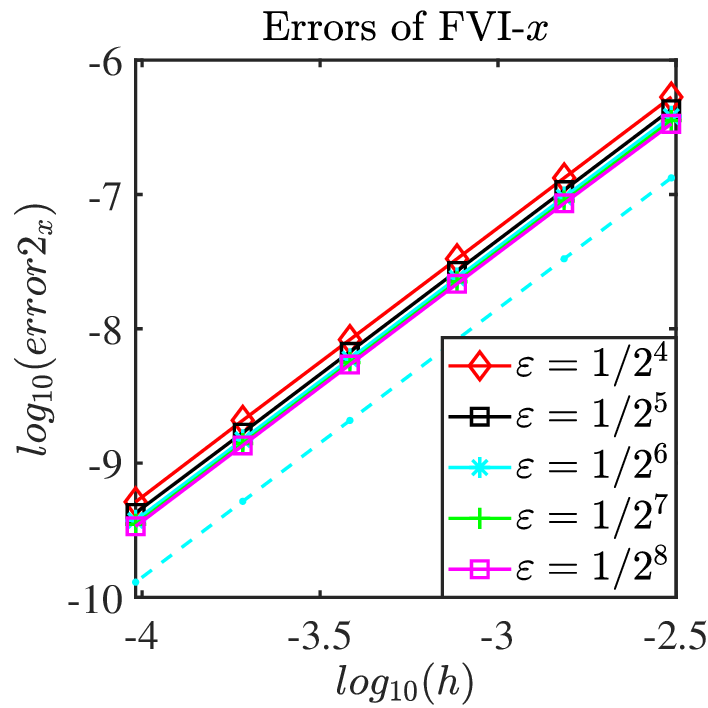}\quad \ \
 		\includegraphics[width=3.8cm,height=3.2cm]{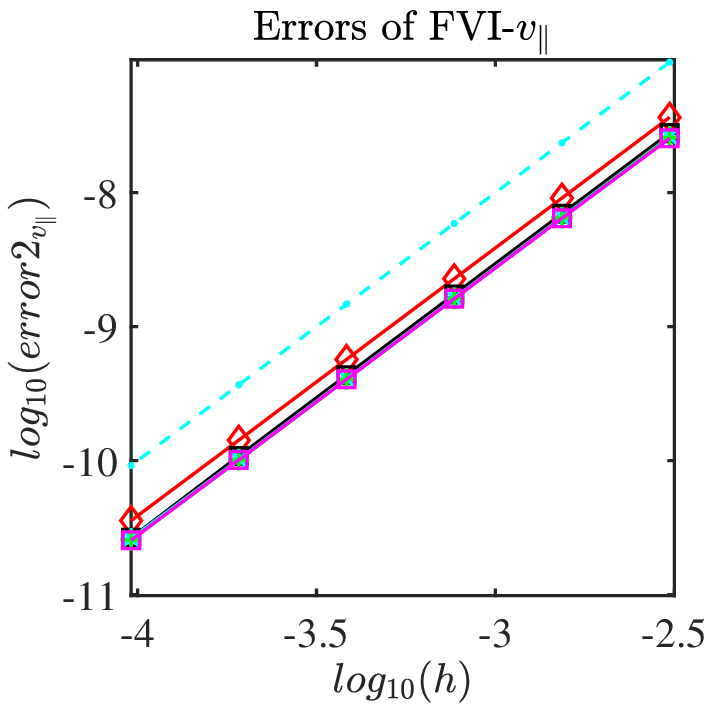}\quad \ \
 		\includegraphics[width=3.8cm,height=3.2cm]{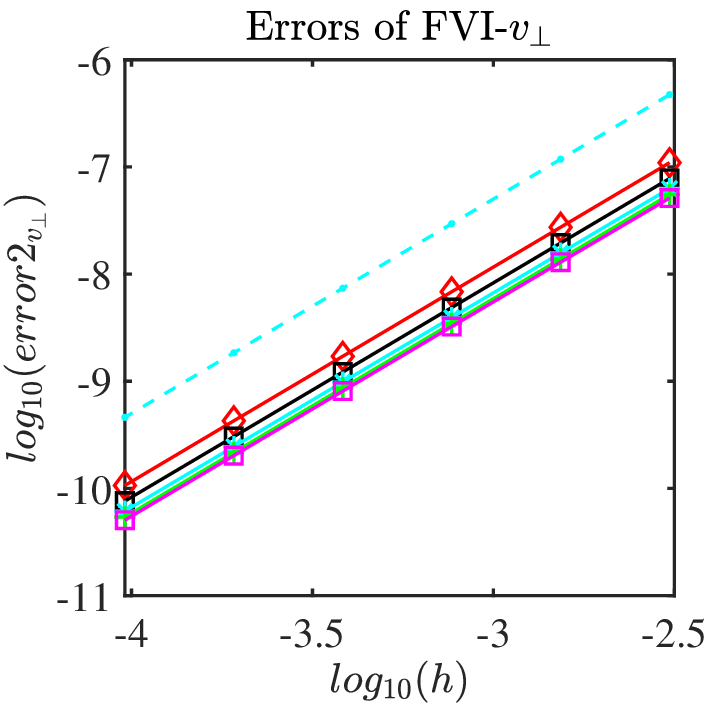}
 	\end{tabular}
 	\caption{Problem 3. The global errors in $x$, $v_{\parallel}$  and $v_{\perp}$ at time $t=\pi/2$ vs. $\varepsilon$ ($h=\pi/2^k$, $k=10,\ldots,15$ ) with different $h$ (the dash-dot line is slope two).}
 	\label{fig:problem33}
 \end{figure}

\begin{figure}[H]
	\centering\tabcolsep=0.5mm
	\begin{tabular}
		[c]{cc}
		\includegraphics[width=5.8cm,height=2.8cm]{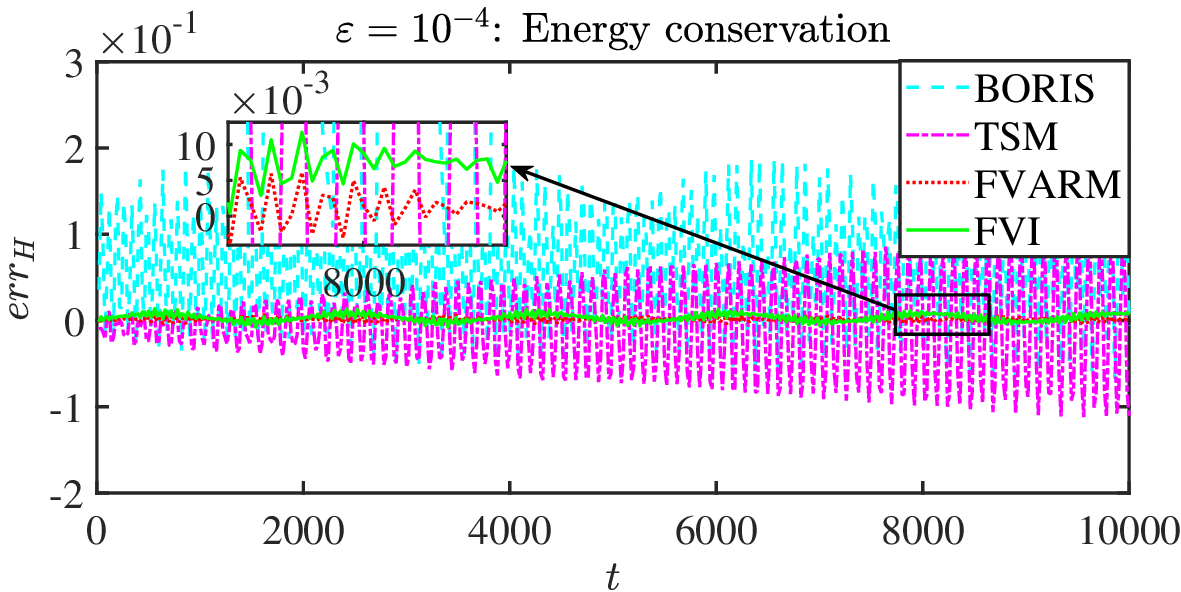}\qquad
		\includegraphics[width=5.8cm,height=2.8cm]{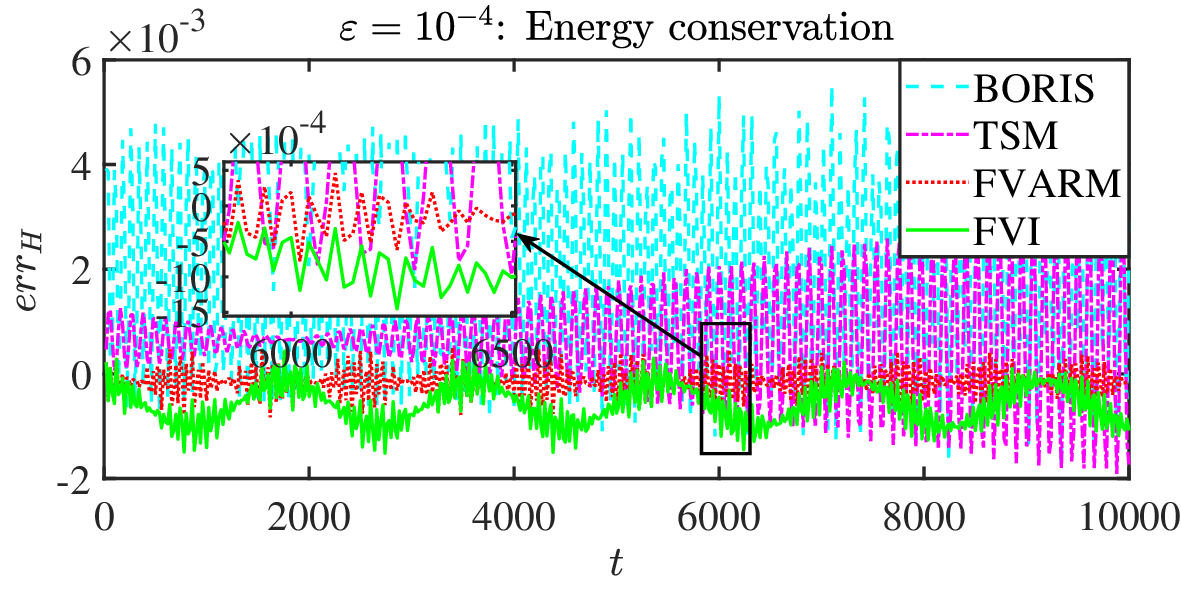}\\
		\includegraphics[width=5.8cm,height=2.8cm]{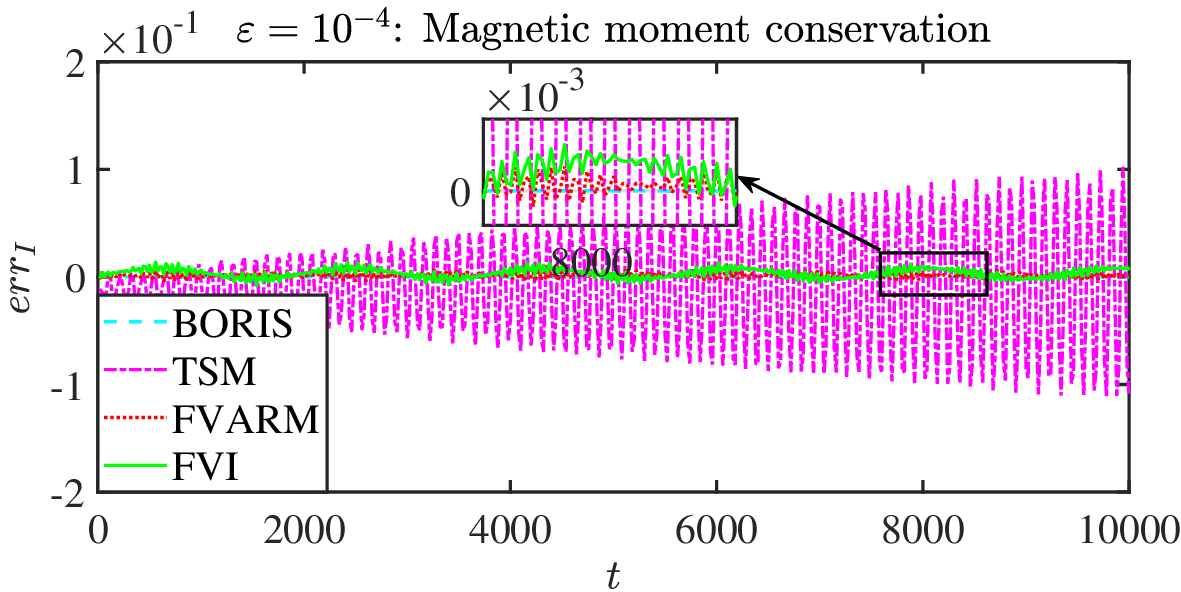}\qquad
		\includegraphics[width=5.8cm,height=2.8cm]{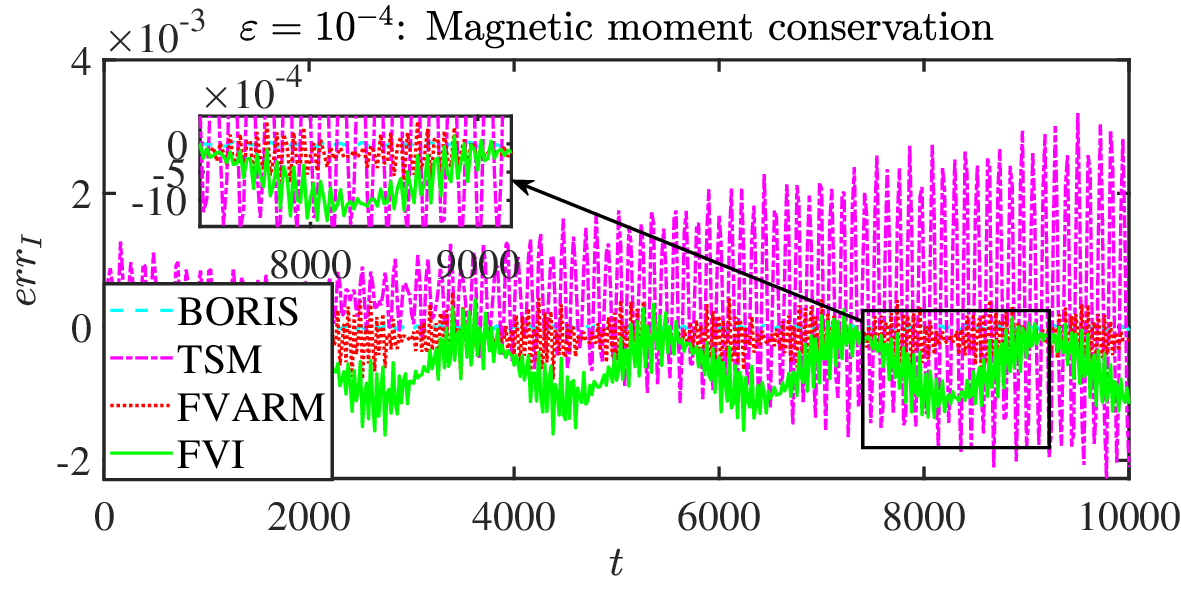}  
	\end{tabular}
	\caption{Problem 3. Evolution of energy error $e_H$ and magnetic moment error $e_{I}$ with step sizes $h = 0.01$ (left) and $h = 0.2\varepsilon$ (right).}
	\label{fig:problem34}
\end{figure}

\begin{figure}[H]
	\centering\tabcolsep=0.5mm
	\begin{tabular}
[c]{cc}
\includegraphics[width=5.8cm,height=2.8cm]{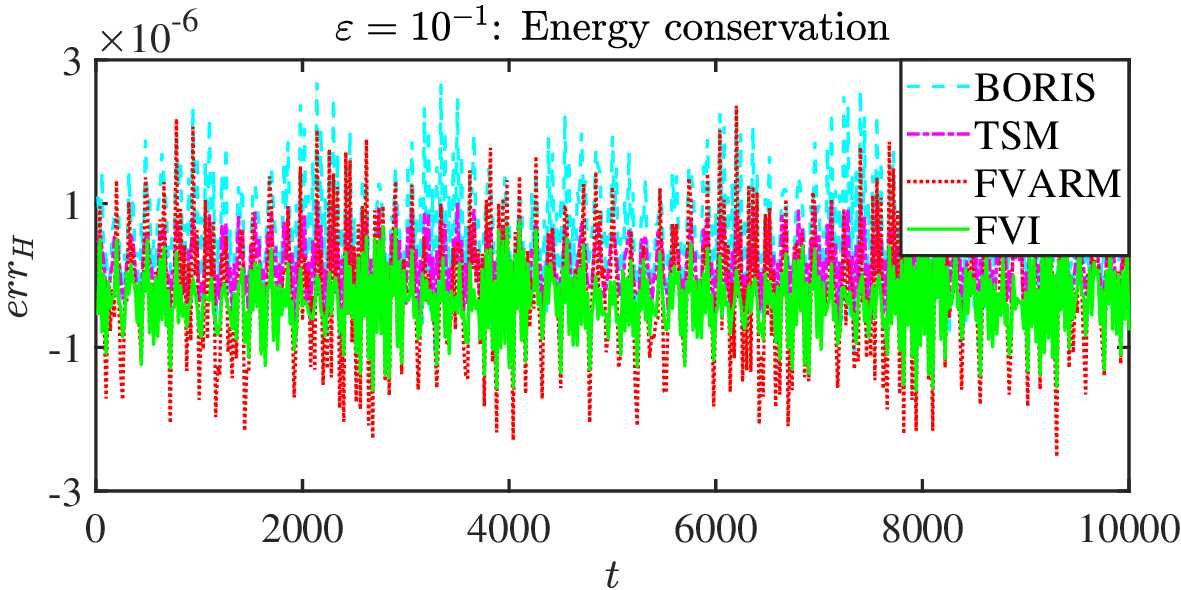}\qquad
\includegraphics[width=5.8cm,height=2.8cm]{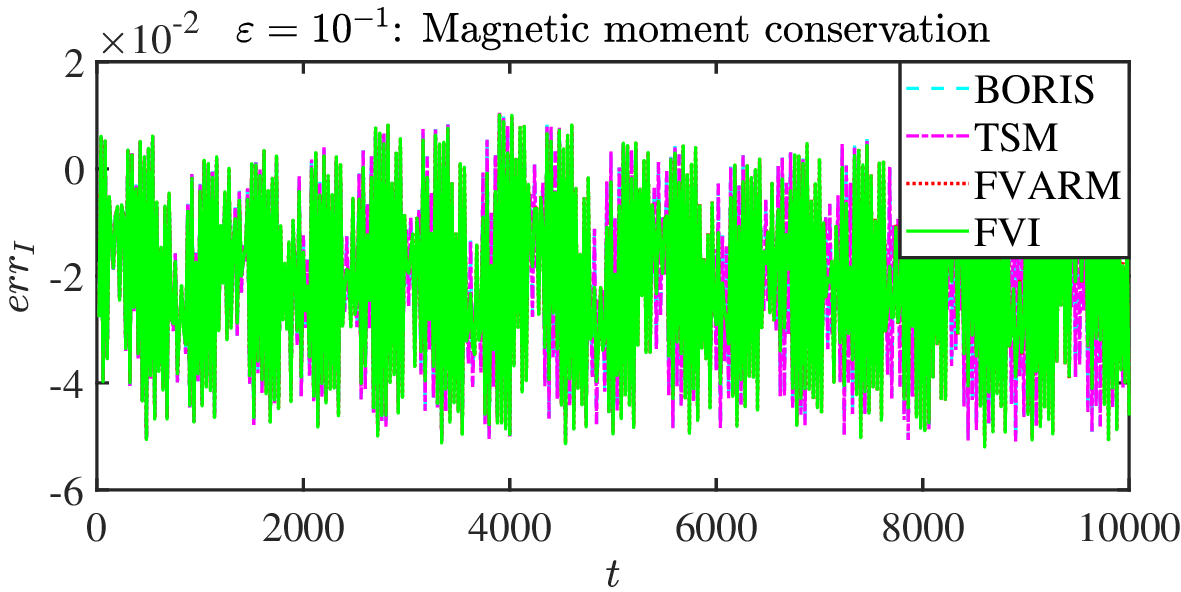} 
	\end{tabular}
	\caption{Problem 3. Evolution of energy error $e_H$ and magnetic moment error $e_{I}$ with step sizes $h = 0.001$.}
	\label{fig:problem35}
\end{figure}

\noindent\vskip3mm \noindent{Problem 4. \textbf{(Maximal Ordering Scaling})}
This problem investigates a strong
magnetic field as ${B}(x)=\varepsilon^{-1 }\big(1+\varepsilon x_2/(1+\varepsilon^2(x_1^2+x_2^2+x_3^2))^{1/2} ,1-\varepsilon x_1/(1+\varepsilon^2(x_1^2+x_2^2+x_3^2))^{1/2},0\big)^{\intercal}$ and scalar potential $U(x)=x_1^2+2x_2^2+3x_3^2-x_1$  with  $F(x)=-\nabla_{x}U(x)$.
The initial conditions are chosen as  $x(0)=(0.1,0.03,-0.04)^{\intercal}$  and $v(0)=(-0.2,0.01,0.7)^{\intercal}$, and the simulation is performed over the interval $[0,  \pi/2]$. Figures \ref{fig:problem41} and \ref{fig:problem42} show the global errors and Figure \ref{fig:problem43}  presents the near-conservation of energy and magnetic moment.

\begin{figure}[H]
	\centering\tabcolsep=0.5mm
	\begin{tabular}
		[c]{ccc}
		\includegraphics[width=3.8cm,height=3.2cm]{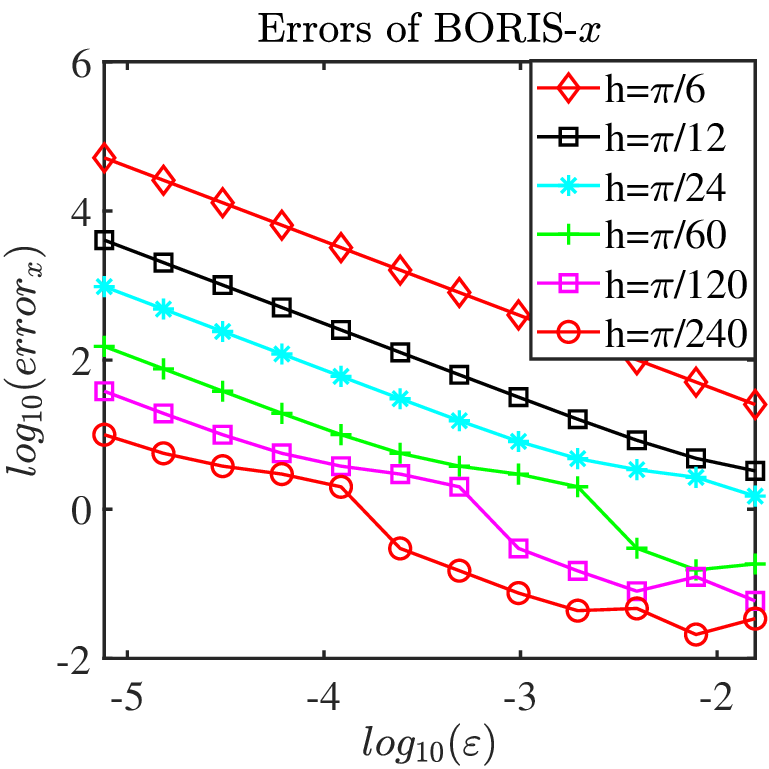}\quad \ \
		\includegraphics[width=3.8cm,height=3.2cm]{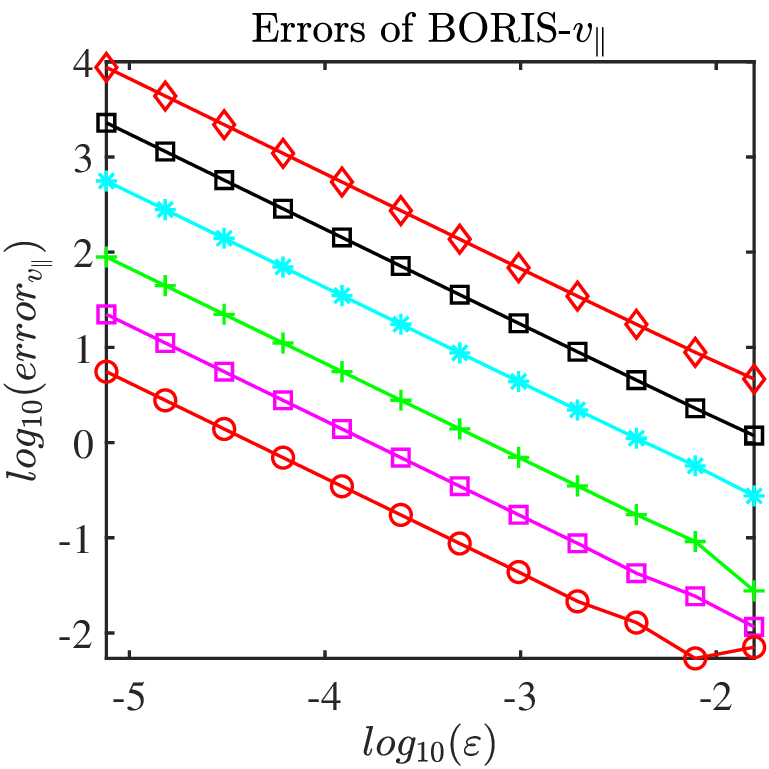}\quad \ \
		\includegraphics[width=3.8cm,height=3.2cm]{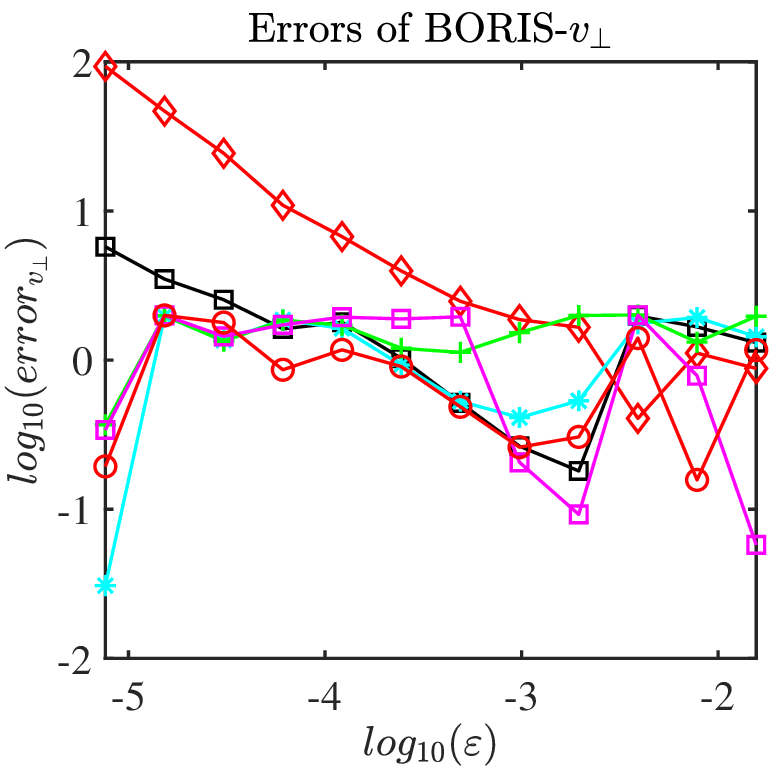} \\
		\includegraphics[width=3.8cm,height=3.2cm]{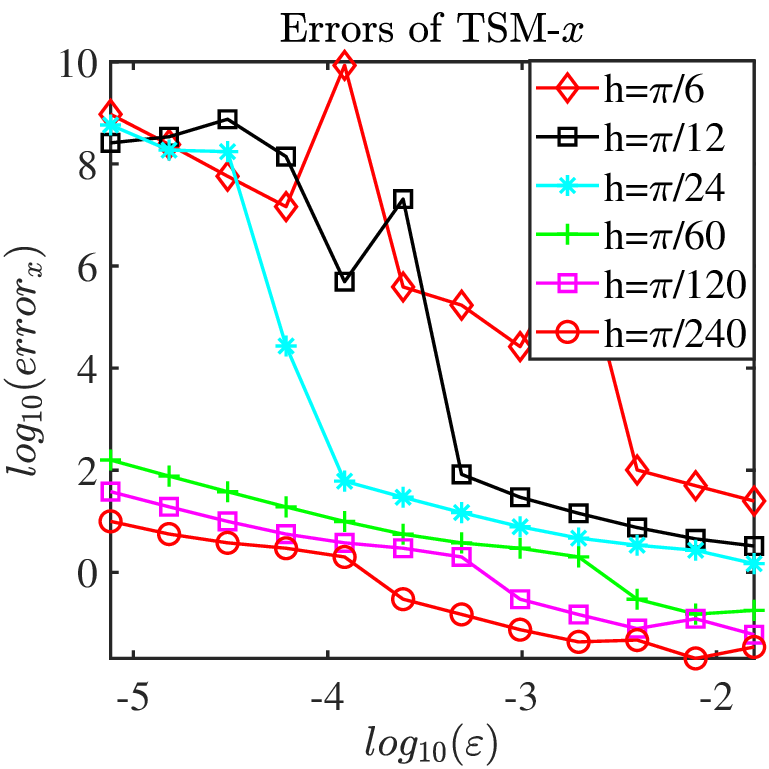}\quad \ \
		\includegraphics[width=3.8cm,height=3.2cm]{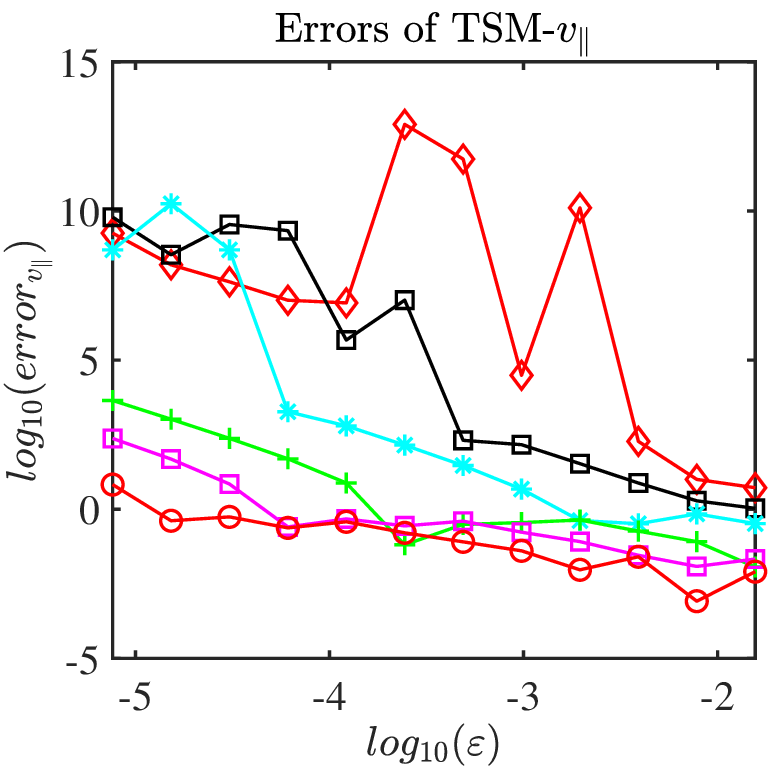}\quad \ \
		\includegraphics[width=3.8cm,height=3.2cm]{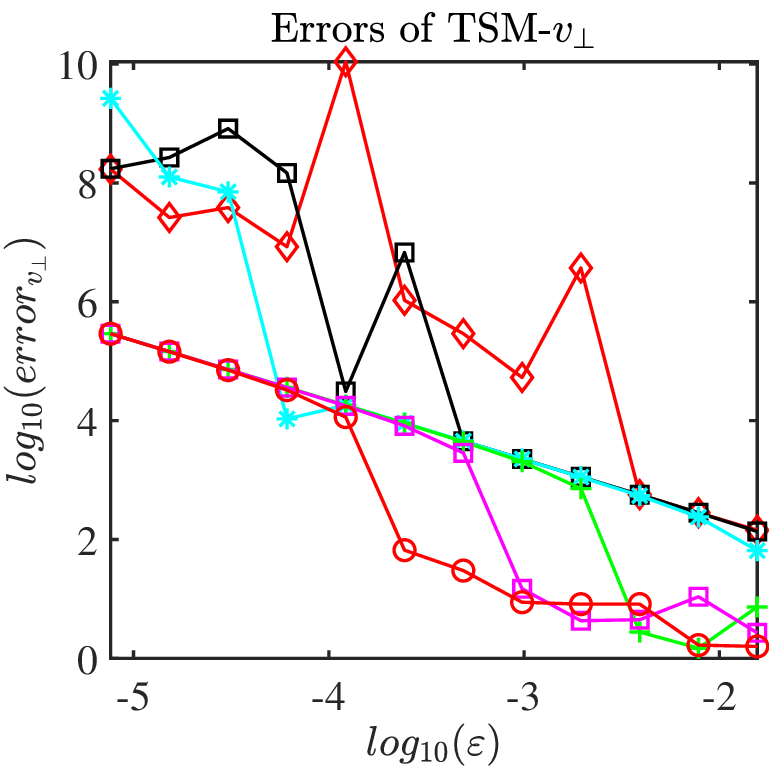}\\
		\includegraphics[width=3.8cm,height=3.2cm]{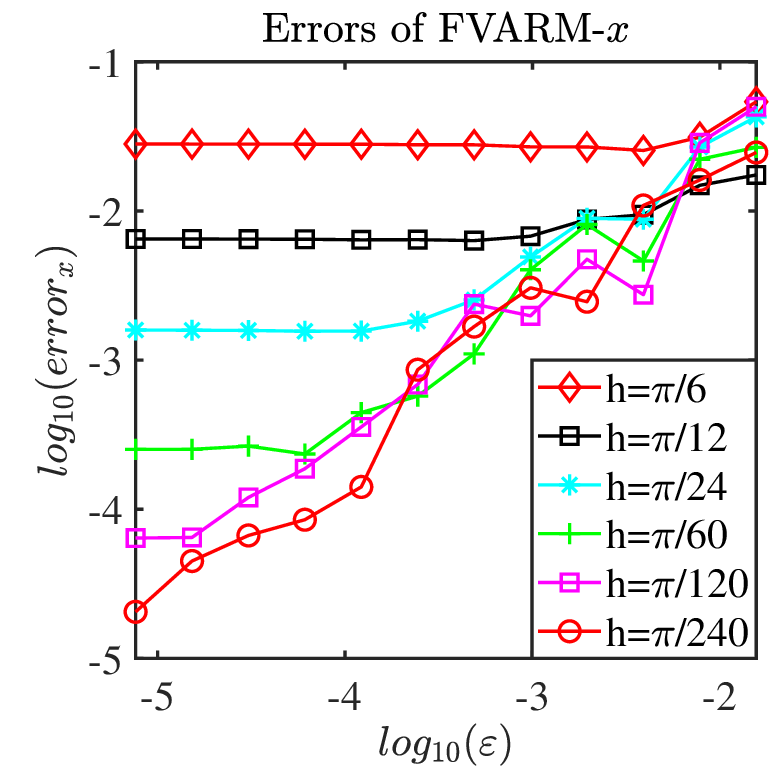}\quad \ \
		\includegraphics[width=3.8cm,height=3.2cm]{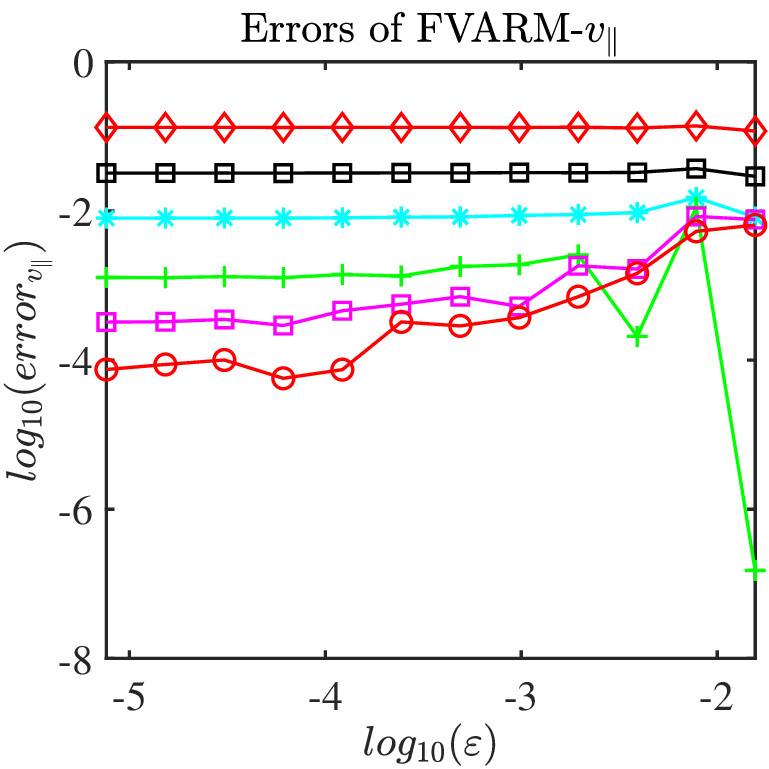}\quad \ \
		\includegraphics[width=3.8cm,height=3.2cm]{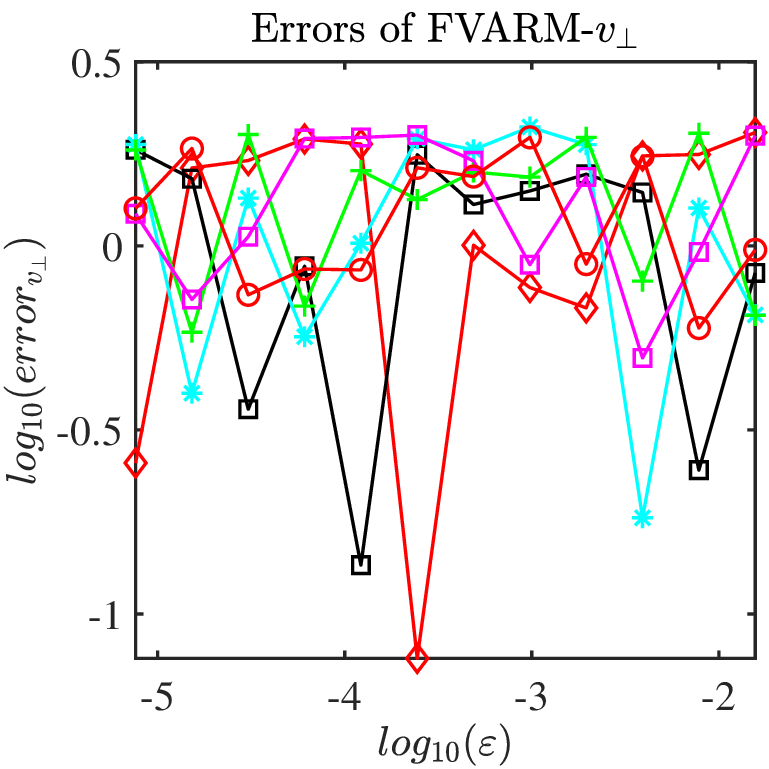}\\
		\includegraphics[width=3.8cm,height=3.2cm]{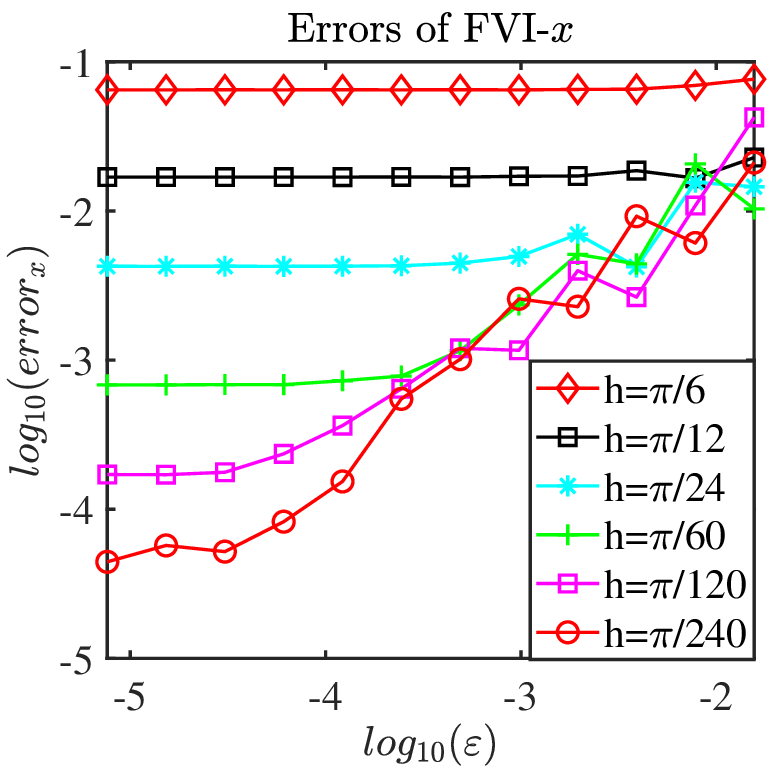}\quad \ \
		\includegraphics[width=3.8cm,height=3.2cm]{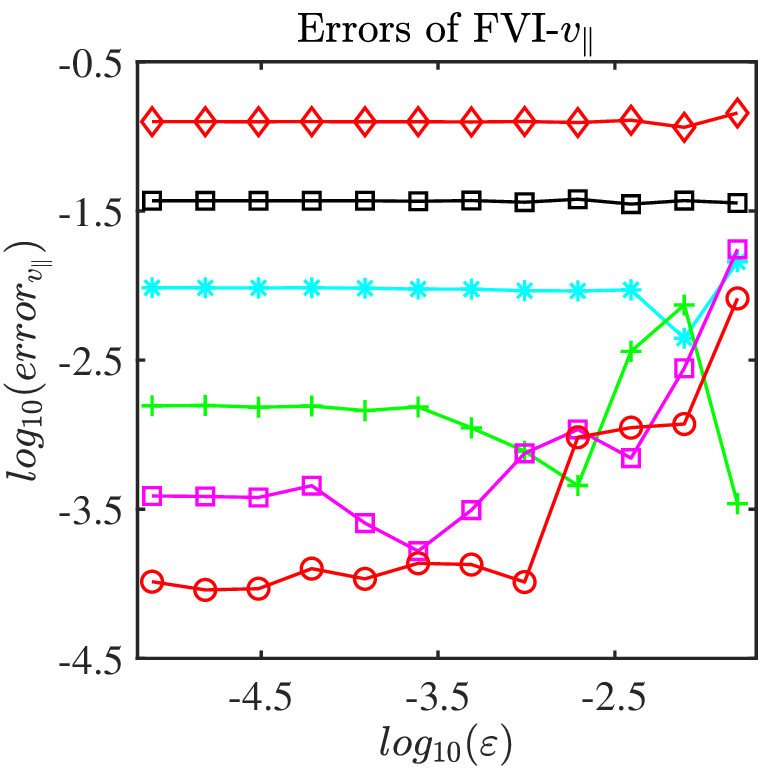}\quad \ \
		\includegraphics[width=3.8cm,height=3.2cm]{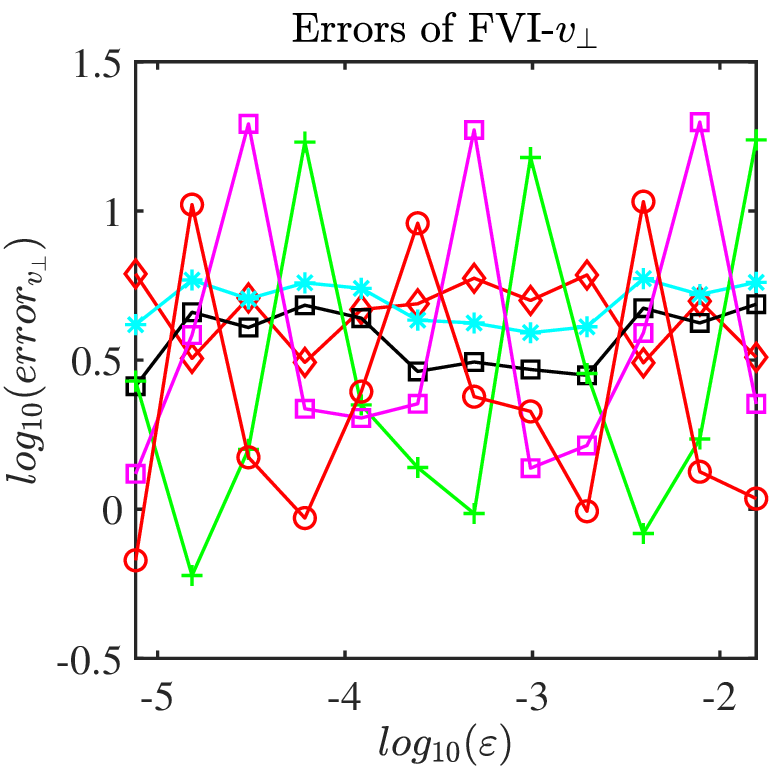}	
	\end{tabular}
	\caption{Problem 4. The global errors in $x$, $v_{\parallel}$  and $v_{\perp}$ at time $t=\pi/2$ vs. $\varepsilon$ ($\varepsilon=1/2^{k}, k=6,\ldots,17 $) with different $h$.}
	\label{fig:problem41}
\end{figure}

\begin{figure}[H]
	\centering\tabcolsep=0.5mm
	\begin{tabular}
		[c]{ccc}
		\includegraphics[width=3.8cm,height=3.2cm]{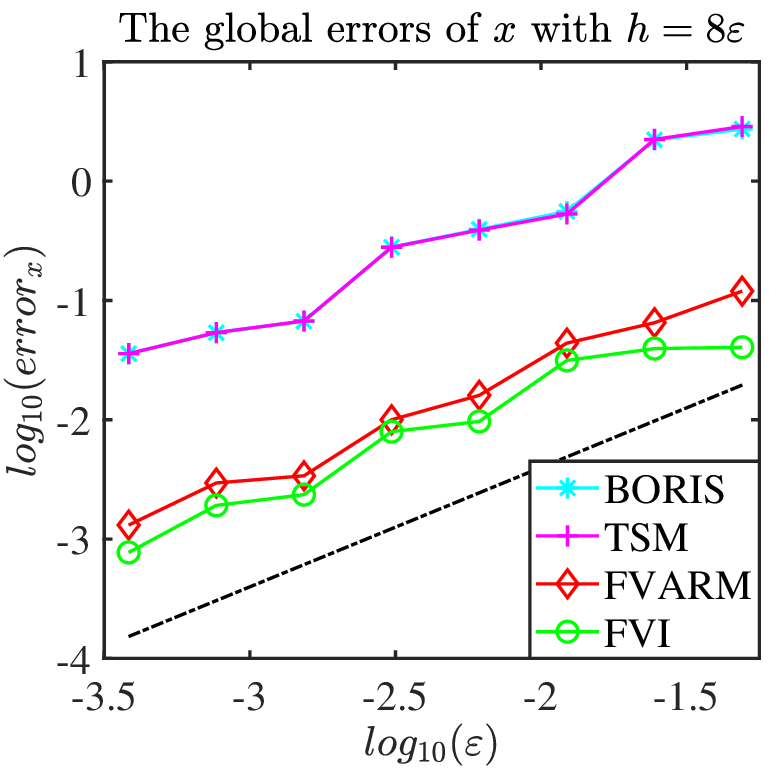}\quad \ \
		\includegraphics[width=3.8cm,height=3.2cm]{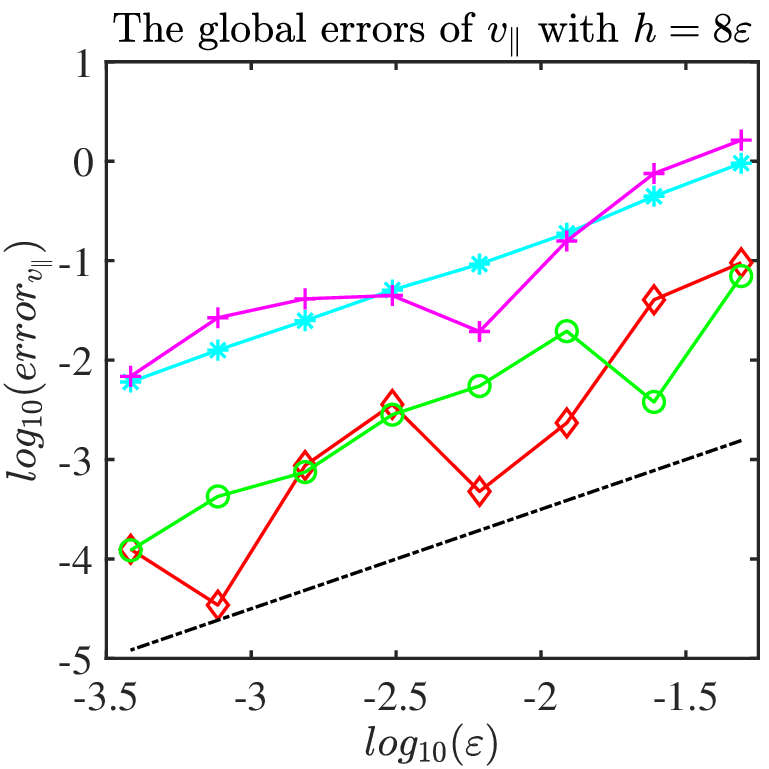}\quad \ \
		\includegraphics[width=3.8cm,height=3.2cm]{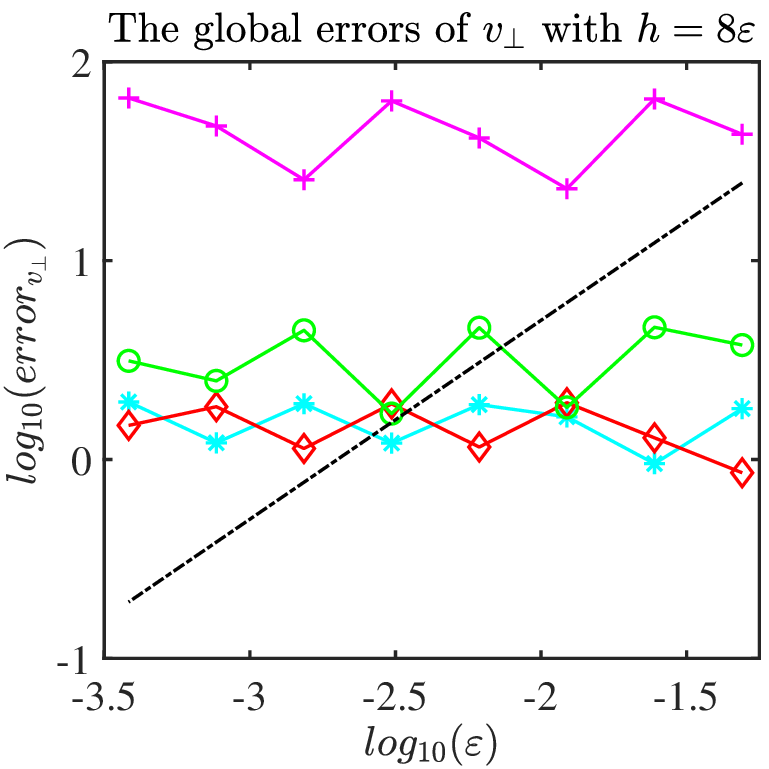} 
	\end{tabular}
	\caption{Problem 4. The global errors in $x$, $v_{\parallel}$  and $v_{\perp}$ at time $t=\pi/2$ vs. $\varepsilon$ ($\varepsilon=\pi/2^k$, $k=6,\ldots,13$ ) with different $h$ (the dash-dot line is slope one).}
	\label{fig:problem42}
\end{figure}

 \begin{figure}[H]
	\centering\tabcolsep=0.5mm
	\begin{tabular}
		[c]{cc}
		\includegraphics[width=5.8cm,height=2.8cm]{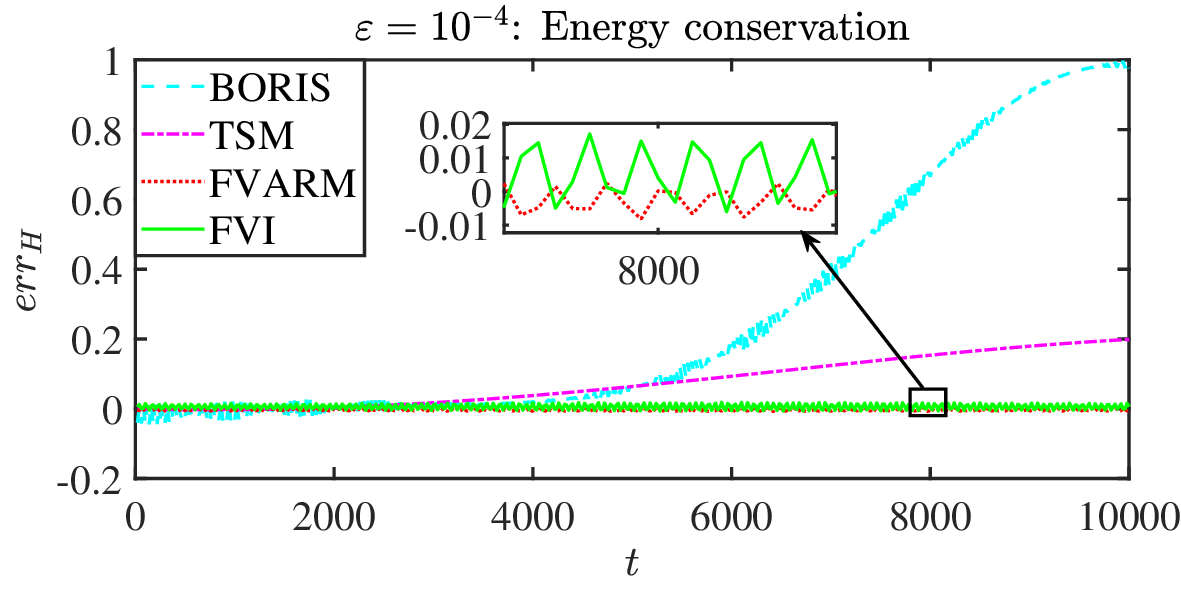}\qquad
		\includegraphics[width=5.8cm,height=2.8cm]{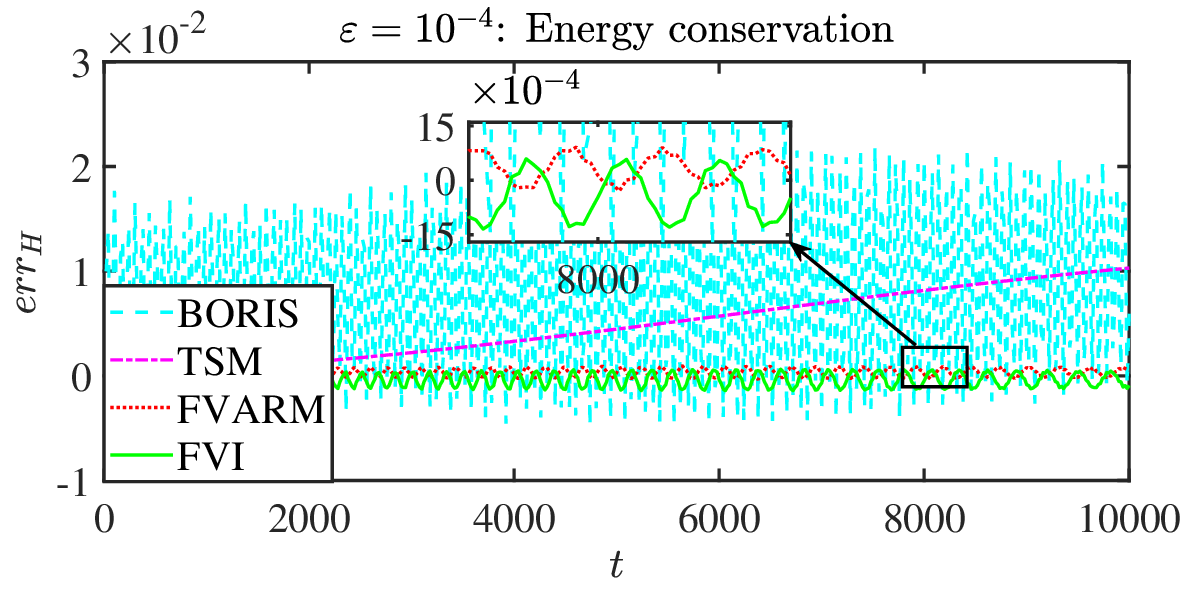}\\
		\includegraphics[width=5.8cm,height=2.8cm]{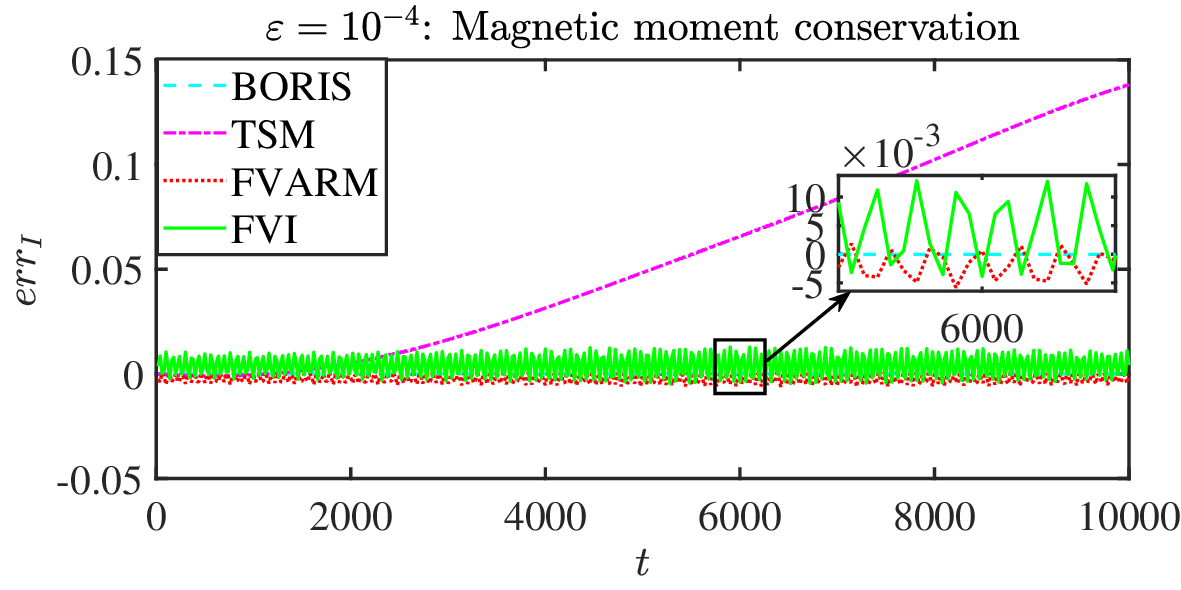}\qquad
		\includegraphics[width=5.8cm,height=2.8cm]{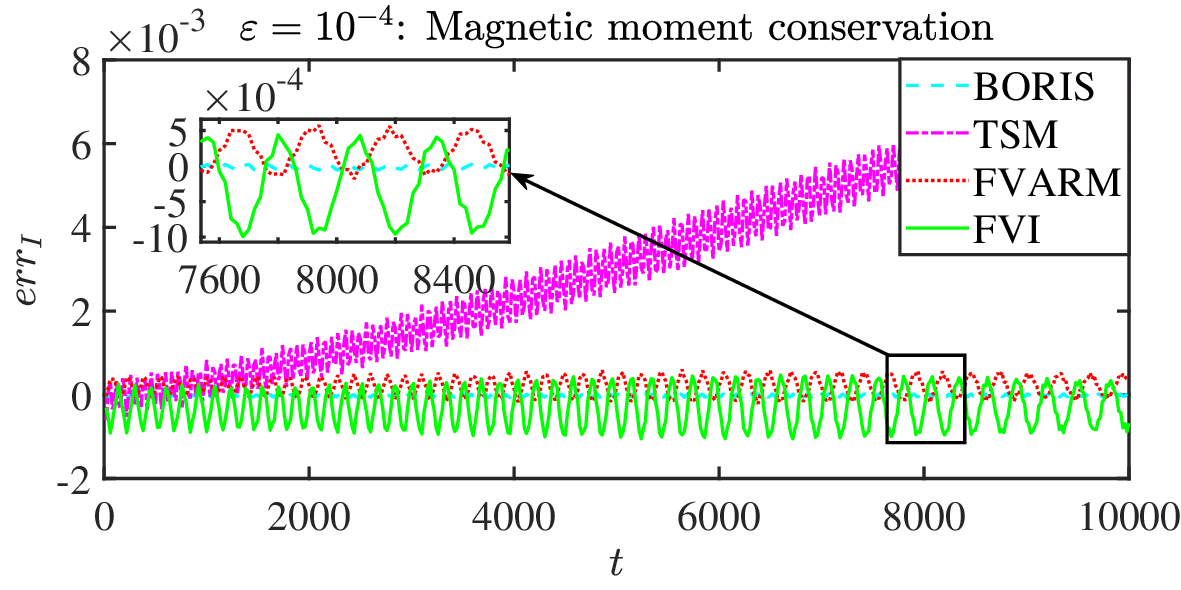}   
	\end{tabular}
	\caption{Problem 4. Evolution of energy error $e_H$ and magnetic moment error $e_{I}$ with step sizes $h = 0.01$ (left) and $h = 0.2\varepsilon$ (right).}
	\label{fig:problem43}
\end{figure}

The following key observations can be drawn from Figures \ref{fig:problem31}--\ref{fig:problem43}:

\textbf{Order behavior.}
Figures \ref{fig:problem31} and \ref{fig:problem41} show second-order convergence for the position $x$ and the parallel velocity $v_{\parallel}$ when $h^2 > C^{*}\varepsilon$. In the regime $c^{*}\varepsilon^2 \le h^2 \le C^{*}\varepsilon$, the global error curves are nearly parallel to a line of slope one, as shown in Figures \ref{fig:problem32} and \ref{fig:problem42}, confirming the $\mathcal{O}(\varepsilon)$ accuracy.
For $h^2 < c^{*}\varepsilon^2$, Figure~\ref{fig:problem33} shows the $\varepsilon$-dependence of the scaled errors defined by $error2_{x}:=\varepsilon\lvert x_{n}-x(t_n) \rvert /\lvert x(t_n)\rvert $, $error2_{v_{\parallel}}:= \varepsilon\lvert v_{\parallel}^{n}-v_{\parallel}(t_n)\rvert/\lvert v_{\parallel}(t_n)\rvert$ and $error2_{v_{\perp}}:=\varepsilon^2\lvert v_{\perp}^{n}-v_{\perp}(t_n)\rvert/\lvert v_{\perp}(t_n)\rvert$. The curves corresponding to different $\varepsilon$ nearly coincide, indicating that the errors in $x$ and $v_{\parallel}$ are of size $\mathcal{O}(h^2/\varepsilon)$, while the error in $v_{\perp}$ is of size $\mathcal{O}(h^2/\varepsilon^2)$. 

\textbf{Energy and magnetic moment behavior.}
Figures \ref{fig:problem34} (top row) and \ref{fig:problem43} (top row) illustrate that FVI and FVARM perform better in preserving energy than the other two methods. For $h^2 > C^{*}\varepsilon$, the energy error remains at the level $\mathcal{O}(h)$ over long times. In the regime $c^{*}\varepsilon^2 \le h^2 \le C^{*}\varepsilon$, the deviation further improves to $\mathcal{O}(\varepsilon)$. For $h^2 < c^{*}\varepsilon^2$, all four methods exhibit stable long-time energy behavior, and the energy error remains bounded, as illustrated in Figure~\ref{fig:problem35}, consistent with Theorem~\ref{conservation-H2}.
A similar behavior is observed for the magnetic moment errors. As shown in Figures \ref{fig:problem34} (bottom row) and \ref{fig:problem43} (bottom row), FVI, BORIS, and FVARM exhibit improved near-conservation compared with TSM in the first two step-size regimes. For $h^2 < c^{*}\varepsilon^2$, Figure~\ref{fig:problem35} shows that all four methods display comparable long-time behavior, with magnetic moment errors of order $\mathcal{O}(\varepsilon)$.

\section{Error bounds and long-term analysis in a moderate magnetic field (Proof of Theorems \ref{error bound 1}--\ref{conservation-M1})} \label{sec4}
In this section, the proof of error bounds and long-term analysis is given for  the filtered two-step variational integrator  \eqref{Method}--\eqref{AV1} applied to the CPD \eqref{charged-particle} in a moderate magnetic field $\varepsilon=1$. To demonstrate the long-time near-conservation of energy and momentum, the approach based on backward error analysis (see Chap. IX of \cite{hairer2006}) will be used. We first present the proof of the error bounds.
\subsection{Error bounds (Proof of Theorem \ref{error bound 1})} \label{sec4.1}
The error analysis consists of two parts: the local truncation error analysis of the two-step formulation \eqref{Method} and the corresponding global error analysis. 

 $\bullet$  \textbf{Proof of the local truncation error.} We begin with the exact solution to the charged particle dynamics \eqref{charged-particle}, which can be formulated via the variation-of-constants formula as follows:
 \begin{equation}\label{exact-solution}
 	\begin{aligned}
 		x(t_n+\tau h)&=x(t_n) +\tau hv(t_n)+h^2
 		\int_{0}^{\tau}(\tau-z)\tilde F(x(t_n+hz),v(t_n+hz))
 		dz,\\
 		v(t_n+\tau h)&=v(t_n)+h
 		\int_{0}^{\tau} \tilde F(x(t_n+hz),v(t_n+hz))  dz,
 	\end{aligned}
 \end{equation}
 where $\tilde{F}(x(t),v(t))=-\tilde{B}(x(t))v(t)+F(x(t))$. Letting $\tau=1$ and $\tau=-1$, the sum of these two equations yields
 \begin{equation}\label{exact-two step}
 	x(t_n+h)-2x(t_n)+x(t_n-h)=h^2\int_{0}^{1}(1-z)\big(\tilde F(x(t_n+hz),v(t_n+hz))+\tilde F(x(t_n-hz),v(t_n-hz))\big)dz.
 \end{equation}  
Inserting the exact solution \eqref{exact-solution} into the numerical scheme \eqref{Method}, we have
\begin{equation}\label{defect-two step}
	\begin{aligned}
		&	x(t_{n+1})-2x(t_{n})+x(t_{n-1})=\Psi\Big(\frac{h}{2}A'(x(t_{n+1/2}))^{\intercal}(x(t_{n+1})-x(t_{n}))
		+\frac{h}{2}A'(x(t_{n-1/2}))^{\intercal}(x(t_{n})-x(t_{n-1}))\\
		&\qquad \qquad \qquad \qquad \qquad \quad  -h\big(A(x(t_{n+1/2}))-A(x(t_{n-1/2}))\big)
 +\frac{h^2}{2}\big(F(x(t_{n+1/2}))+F(x(t_{n-1/2}))\big)	\Big)+\Delta,
	\end{aligned}
\end{equation}
where $x(t_{n\pm 1/2})=\big(x(t_{n})+x(t_{n\pm 1})\big)/2$ and $\Delta$  denotes the local truncation error.
Combining \eqref{exact-two step} with a Taylor expansion yields
\begin{equation*}
	\begin{aligned}
		\Delta&=h^2\int_{0}^{1}(1-z)\big(\tilde F(x(t_n+hz),v(t_n+hz))+\tilde F(x(t_n-hz),v(t_n-hz))\big)dz-\Psi\Big(h\big(A(x(t_{n-1/2})\\
		&\ -A(x(t_{n+1/2})\big)+\frac{h}{2}A'(x(t_{n+1/2}))^{\intercal}\big(x(t_{n+1})-x(t_{n})\big)
		+\frac{h}{2}A'(x(t_{n-1/2}))^{\intercal}\big(x(t_{n})-x(t_{n-1})\big) \\
	&\
		+\frac{h^2}{2}\big(F(x(t_{n+1/2}))+F(x(t_{n-1/2}))\big)
		\Big)\\
		&=h^2\int_{0}^{1}2(1-z)\Big(\tilde F(x(t_n),v(t_n))+\frac{h^2z^2}{2!}\tilde F^{(2)}(x(t_n),v(t_n))+\frac{h^4z^4}{4!}\tilde F^{(4)}(x(t_n),v(t_n))+\mathcal{O}(h^6) \Big)dz\\
		&\
		-\Psi\bigg(-h\bigg(A\Big(x(t_{n})+\frac{h}{2}\dot{x}(t_n)+\frac{h^2}{4}\ddot{x}(t_n)+\mathcal{O}(h^3)\Big)
		-A\Big(x(t_{n})-\frac{h}{2}\dot{x}(t_n)+\frac{h^2}{4}\ddot{x}(t_n)+\mathcal{O}(h^3)\Big)\bigg)\\
		&\ +\frac{h^2}{2}\bigg(F\Big(x(t_{n})+\frac{h}{2}\dot{x}(t_n)+\frac{h^2}{4}\ddot{x}(t_n)+\mathcal{O}(h^3)\Big)
		+F\Big(x(t_{n})-\frac{h}{2}\dot{x}(t_n)+\frac{h^2}{4}\ddot{x}(t_n)+\mathcal{O}(h^3)\Big)\bigg)\\
		&\ +\frac{h}{2}A'\Big(x(t_{n})+\frac{h}{2}\dot{x}(t_n)+\mathcal{O}(h^2)\Big)^{\intercal}\big(h\dot{x}(t_n)+\mathcal{O}(h^2)\big)
		+\frac{h}{2}A'\Big(x(t_{n})-\frac{h}{2}\dot{x}(t_n)+\mathcal{O}(h^2)\Big)^{\intercal}\big(h\dot{x}(t_n)+\mathcal{O}(h^2)\big)	\bigg)\\
		&=h^2\big(-\tilde{B}(x(t_n))v(t_n)+F(x(t_n))\big)-
		h^2\Psi\Big(\big (A'(x(t_n))^{\intercal}-A'(x(t_n))\big)\dot{x}(t_n)+F(x(t_n))\Big)+\mathcal{O}(h^4)=\mathcal{O}(h^4),
	\end{aligned}
\end{equation*}
where $(\cdot)^{(m)}$ denotes the $m$th derivative with respect to the variable. Here, we use $v \times B(x)=\big(A'(x)^{\intercal}-A'(x)\big)v$  and the definition of $\Psi$\label{MVF}, along with the Taylor expansion
$\mathrm{tanc}(h/2)=1+h^2/12+\mathcal{O}(h^4)$.

$\bullet$ \textbf{Proof of the global error.} The global error of the two-step formulation \eqref{Method} is denoted by $e_n^x = x(t_n) - x_n.$
Subtracting the formula \eqref{Method} from \eqref{defect-two step} yields
\begin{equation*}
	\begin{aligned}
		&e_{n+1}^{x}-2e_{n}^{x}+e_{n-1}^{x}	=\Psi\Big(\frac{h}{2}A'(x(t_{n+1/2}))^{\intercal}\big(x(t_{n+1})-x(t_{n})\big)
		+\frac{h}{2}A'(x(t_{n-1/2}))^{\intercal}\big(x(t_{n})-x(t_{n-1})\big)\\
		& \ \ -h\big(A(x(t_{n+1/2}))-A(x(t_{n-1/2}))\big)
		+\frac{h^2}{2}\big(F(x(t_{n+1/2}))+F(x(t_{n-1/2}))\big)-\frac{h}{2}A'(x_{n+1/2})^{\intercal}(x_{n+1}-x_{n})\\
		& \ \
		-\frac{h}{2}A'(x_{n-1/2})^{\intercal}(x_{n}-x_{n-1})
		+h\big(A(x_{n+1/2})-A(x_{n-1/2})\big)
		-\frac{h^2}{2}\big(F(x_{n+1/2})+F(x_{n-1/2})\big)	\Big)+\Delta.
	\end{aligned}
\end{equation*}
 By performing some calculations, it can be deduced that
\begin{equation*}
	\begin{aligned}
		&e_{n+1}^{x}-2e_{n}^{x}+e_{n-1}^{x}	=\Psi\bigg(\frac{h}{2}\Big(A'(x(t_{n+1/2}))^{\intercal}\big(x(t_{n+1})-x_{n+1}\big)+A'(x(t_{n+1/2}))^{\intercal}x_{n+1}-A'(x_{n+1/2})^{\intercal}x_{n+1}\\
		&\ \
		-A'(x(t_{n+1/2}))^{\intercal}(x(t_{n})-x_{n})-A'(x(t_{n+1/2}))^{\intercal}x_{n}+A'(x_{n+1/2})^{\intercal}x_{n}\Big)+\frac{h}{2}\Big(A'(x(t_{n+1/2}))^{\intercal}\big(x(t_{n})-x_{n}\big)\\
		&\ \
		+A'(x(t_{n+1/2}))^{\intercal}x_{n}-A'(x_{n+1/2})^{\intercal}x_{n}-A'(x(t_{n+1/2}))^{\intercal}\big(x(t_{n-1})-x_{n-1}\big)-A'(x(t_{n+1/2}))^{\intercal}x_{n-1}\\
		&\ \ +A'(x_{n+1/2})^{\intercal}x_{n-1}\Big)
		 -h\Big(A(x(t_{n+1/2}))-A(x_{n+1/2})-\big(A(x(t_{n-1/2}))-A(x_{n-1/2})\big)\Big)\\
		& \ \
		+\frac{h^2}{2}\Big(\big(F(x(t_{n+1/2}))-F(x_{n+1/2})\big)
		+\big(F(x(t_{n-1/2}))-F(x_{n-1/2})\big)\Big) 	\bigg)+\Delta.
	\end{aligned}
\end{equation*}
Then we obtain 
$
\abs{e_{n+1}^{x}-2e_{n}^{x}+e_{n-1}^{x}} \leq Ch\big(\abs{e_{n+1}^{x}}+\abs{e_{n}^{x}}+\abs{e_{n-1}^{x}}\big)+Ch^4.
$
With $\delta_n := e_n^{x}-e_{n-1}^{x}$, this estimate becomes
\begin{equation}\label{D1}
	\abs{\delta_{n+1}-\delta_n}
	\le
	C h\big(\abs{e_{n+1}^{x}}+\abs{e_{n}^{x}}+\abs{e_{n-1}^{x}}\big)
	+ C h^4.
\end{equation}
Summing \eqref{D1} from $m=1$ to $n$ yields
$
	\abs{\delta_{n+1}}
	\le
	\abs{\delta_1}
	+ C h \sum_{m=1}^{n}
	\big(\abs{e_{m+1}^{x}}+\abs{e_m^{x}}+\abs{e_{m-1}^{x}}\big)
	+ C n h^4.
$
Since $n h \le T$, the last term satisfies $C n h^4 \le C T h^3$, and hence
\begin{equation}\label{D3}
	\abs{\delta_{n+1}}
	\le
	\abs{\delta_1}
	+ C h \sum_{m=0}^{n+1} \abs{e_m^{x}}
	+ C T h^3.
\end{equation}
Since $e_0^{x}=0$, we have
$
e_n^{x} = \sum_{j=1}^{n} \delta_j$ and 
$
\abs{e_n^{x}}
\le
\sum_{j=1}^{n} \abs{\delta_j}.
$
A further summation of \eqref{D3} over $j=1,\dots,n$ gives
$
  \sum_{j=1}^{n} \abs{\delta_j}
\le
n \abs{\delta_1}
+ C h \sum_{j=1}^{n} \sum_{m=0}^{j} \abs{e_m^{x}}
+ C n T h^3.
$
Noting that
$
\sum_{j=1}^{n} \sum_{m=0}^{j} \abs{e_m^{x}}
=
\sum_{m=1}^{n} (n-m+1) \abs{e_m^{x}}
\le
(n+1) \sum_{m=1}^{n} \abs{e_m^{x}},
$
and using $n \le T/h$, we obtain
$
	\abs{e_n^{x}}
	\leq
	T/h \abs{\delta_1}
	+ C T \sum_{m=1}^{n} \abs{e_m^{x}}
	+ C T^2 h^2.
$
By the discrete Gronwall inequality, if $\delta_1:=e_{1}^{x}=\mathcal{O}(h^3)$, then $e_n^{x}=\mathcal{O}(h^2)$.

Following a similar approach as in the previous analysis, we now derive the value of $e_{1}^{x}$. Referring to the formulation process in Section \ref{sec2}, the starting value $x_1$ is expressed as
\begin{equation}\label{start_x1}
	\begin{aligned}
		x_{1}
		=x_0+h\Psi\big(p_0
		+A'\big((x_0+ x_{1})/2\big)^{\intercal}(x_{1}-x_{0})/2
		-A\big((x_0+ x_{1})/2\big)	+hF\big((x_0+ x_{1})/2\big)/2\big),
	\end{aligned}
\end{equation}
where $p_0$ is obtained by evaluating
 $p(t)=v(t)+A(x(t))$ at $t=0$.
Then the variation-of-constants formula \eqref{exact-solution} gives
\begin{equation}\label{eaxct_x1}
		x(t_0+ h)=x(t_0) +hv(t_0)+h^2
		\int_{0}^{1}(1-z)\tilde F(x(t_0+hz),v(t_0+hz))
		dz.
\end{equation}
Inserting the exact solution \eqref{eaxct_x1} into \eqref{start_x1}, we have
\begin{equation}\label{local_x1}
	\begin{aligned}
		x(t_1)
		=&x(t_0)+h\Psi\Big(p(t_0)
		+A'\big((x(t_0)+ x(t_1))/2\big)^{\intercal}\big(x(t_1)-x(t_0)\big)/2
		-A\big((x(t_0)+ x(t_1))/2\big)\\	&+hF\big((x(t_0)+ x(t_1))/2\big)/2\Big)
		+	\Delta_1,
	\end{aligned}
\end{equation}
where $\Delta_1$ denotes the local truncation error at the first step. From the above two formulae and the definition of $\Psi$, it follows that
\begin{equation*}
	\begin{aligned}
	\Delta_1
		&=hv(t_0)+h^2\int_{0}^{1}(1-z)\tilde F(x(t_0+hz),v(t_0+hz))
		dz -h\Psi\Big(A'\big((x(t_0)+ x(t_1))/2\big)^{\intercal}\big(x(t_1)-x(t_0)\big)/2\\
		&\quad  +p(t_0)
		-A\big((x(t_0)+ x(t_1))/2\big)	+hF\big((x(t_0)+ x(t_1))/2\big)/2\Big)\\
		&=hv(t_0)+h^2\tilde F(x(t_0),v(t_0))/2
		-h\Big(v(t_0)+A(x(t_0))
		+hA'(x(t_0))^{\intercal}\dot{x}(t_0)/2-A(x(t_0))\\
		&\quad 
		-hA'(x(t_0))\dot{x}(t_0)/2	+hF(x(t_0))/2\Big)+\mathcal{O}(h^3).
	\end{aligned}
\end{equation*}
By combining the expression $\tilde{F}(x(t),v(t))=-\tilde{B}(x(t))v(t)+F(x(t))$ with the fact $v \times B(x)=\big(A'(x)^{\intercal}-A'(x)\big)v$, we obtain  $\Delta_1=\mathcal{O}(h^3)$. Then, comparing  \eqref{local_x1} with \eqref{start_x1} yields
\begin{equation*}
	\begin{aligned}
	e_{1}^{x}
	&=x(t_1)-x_1
	=x(t_0)-x_0+h\Psi\bigg(p(t_0)
	+\frac{1}{2}A'\Big(\frac{x(t_0)+ x(t_1)}{2}\Big)^{\intercal}\big(x(t_1)-x(t_0)\big)
	-A\Big(\frac{x(t_0)+ x(t_1)}{2}\Big)\\	
	&\quad +\frac{h}{2}F\Big(\frac{x(t_0)+ x(t_1)}{2}\Big)\bigg)-h\Psi\bigg(p_0
	+\frac{1}{2}A'\Big(\frac{x_0+ x_{1}}{2}\Big)^{\intercal}(x_{1}-x_{0})
	-A\Big(\frac{x_0+ x_{1}}{2}\Big)	+\frac{h}{2}F\Big(\frac{x_0+ x_{1}}{2}\Big)\bigg)+\Delta_1\\
	&=\frac{h}{2}A'\Big(\frac{x(t_0)+ x(t_1)}{2}\Big)^{\intercal}(x(t_1)-x_1)+\frac{h}{2}A'\Big(\frac{x(t_0)+ x(t_1)}{2}\Big)^{\intercal}x_1-\frac{h}{2}A'\Big(\frac{x_0+ x_{1}}{2}\Big)^{\intercal}x_{1}\\
	&\quad -\frac{h}{2}A'\Big(\frac{x(t_0)+ x(t_1)}{2}\Big)^{\intercal}(x(t_0)-x_0)-\frac{h}{2}A'\Big(\frac{x(t_0)+ x(t_1)}{2}\Big)^{\intercal}x_0+\frac{h}{2}A'\Big(\frac{x_0+ x_{1}}{2}\Big)^{\intercal}x_{0}\\
	&\quad -h\Big(A\Big(\frac{x(t_0)+ x(t_1)}{2}\Big)-A\Big(\frac{x_0+ x_{1}}{2}\Big)\Big)+\frac{h^2}{2}\Big(F\Big(\frac{x(t_0)+ x(t_1)}{2}\Big)-F\Big(\frac{x_0+ x_{1}}{2}\Big)\Big)+\mathcal{O}(h^3)+\Delta_1.
	\end{aligned}
\end{equation*}
This gives 
\begin{equation*}
		\abs{e_{1}^{x}} \leq C(h+h^2)\abs{e_{1}^{x}}+Ch^3,
\end{equation*}	
where we use the fact that $e_{0}^{x}=0$. For sufficiently small $h$, this implies $e_{1}^{x}=\mathcal{O}(h^3)$. Thus, the global error in position of Theorem \ref{error bound 1} is obtained. Similarly, the global error in velocity can also be derived.

\subsection{Conservation properties (Proof of Theorems \ref{conservation-H1}--\ref{conservation-M1})} 
This section is devoted to the analysis of the long-time near-conservation of energy and momentum based on backward error analysis, which is an important tool for understanding
the long-time behavior of numerical methods. We first establish the long-time near-conservation of energy below.

\subsubsection{Long-time  near-conservation of energy (Proof of Theorem \ref{conservation-H1})} \label{sec4.2}
The proof of the long-time near-conservation of energy proceeds in three steps. First, we search for a modified differential equation whose solution is formally equivalent to the numerical solution. Next, we show that this modified equation admits a formal first integral that remains close to the total energy $H$.
Finally, the long-time near-conservation of energy is obtained
by extending the analysis from short to long time intervals.

$\bullet$  \textbf{Modified differential equation.} We seek a modified differential equation in the form of a formal power series in $h$ whose solution $p(t)$ formally satisfies $p(nh)=x_n$, where $x_n$ denotes the numerical solution generated by the filtered two-step variational integrator. Such a function has to satisfy  
\begin{equation}\label{modif}
	\begin{aligned}
		&p(t+h)-2p(t)+p(t-h)
		=\Psi\big(-h\big(A((p(t+h)+p(t))/2)-A((p(t)+p(t-h))/2)\big)\\
		&\ +hA'((p(t+h)+p(t))/2)^{\intercal}(p(t+h)-p(t))/2 +hA'((p(t)+p(t-h))/2)^{\intercal}(p(t)-p(t-h))/2
		\\
		&\ +h^2\big( F((p(t+h)+p(t))/2)+F((p(t)+p(t-h))/2)\big)/2	\big).
	\end{aligned}
\end{equation}
Define the operators
	\begin{equation}\label{oper}
	L_1(e^{hD})=e^{hD}-1,\quad L_2(e^{hD})=(e^{hD}+1)/2,
\end{equation}
where $D$ denotes time differentiation and $e^{hD}$
is the corresponding shift operator. Using the Taylor expansion, we have
\begin{equation}\label{operators-expasion}
	\begin{aligned}
		L_{1}^{2}L_{2}^{-2}(e^{hD})=h^2D^2-\frac{1}{6}h^4D^4+\frac{17}{720}h^6D^6+\cdots, \ \
		L_{1}L_{2}^{-1}(e^{hD})=hD-\frac{1}{12}h^3D^3+\frac{1}{120}h^5D^5+\cdots.
	\end{aligned}
\end{equation}
Then letting $$\tilde{y}(t):=(p(t)+p(t+h))/2=L_2(e^{hD})p(t),$$ 
the formula \eqref{modif} becomes
\begin{equation}\label{modif2}
\begin{aligned}
&\frac{1}{h^2}(e^{hD}-2+e^{-hD})L_2^{-1}(e^{hD})\tilde{y}(t)=\Psi\Big(-\frac{1}{h}\big(A(\tilde{y}(t))-A(\tilde{y}(t-h))\big)+\frac{1}{2h}A'(\tilde{y}(t))^{\intercal}\big(L_1 L_2^{-1}(e^{hD})\tilde{y}(t)\big)\\
&\quad +\frac{1}{2h}A'(\tilde{y}(t-h))^{\intercal}\big(L_1 L_2^{-1}(e^{hD})\tilde{y}(t-h)\big)+\frac{1}{2}\big(F(\tilde{y}(t))+F(\tilde{y}(t-h))\big)\Big)\\
&\ =L_2(e^{-hD})\Big(\Psi\Big(\frac{1}{h}A'(\tilde{y}(t))^{ \intercal}(L_1 L_2^{-1}(e^{hD})\tilde{y}(t))
-\frac{1}{h}L_1 L_2^{-1}(e^{hD})A(\tilde{y}(t))
+F(\tilde{y}(t))\Big)\Big).
\end{aligned}
\end{equation}	
Here, we have used the definition for the derivatives of $f: \mathbb{R}^3 \to \mathbb{R}^3$  with respect to $\tilde{y}$ and its time derivatives. For example,  the first-order derivative is given by $Df(\tilde{y})=f'(\tilde{y})\dot{\tilde{y}}$  and the second-order derivative is expressed as $D^2f(\tilde{y})=f''(\dot{\tilde{y}},\dot{\tilde{y}})+f'(\tilde{y})\ddot{\tilde{y}}$ (see \cite{hairer2004}).  Moreover, we note that $e^{hD}\tilde{y}(t)=\tilde{y}(t+h)$ and the operator $L_{1}L_{2}^{-1}(e^{hD})$
is an odd operator. Based on the definitions of $L_1$ and $L_2$, it follows that
\begin{equation*}\label{}
e^{hD}-2+e^{-hD}
=L_2(e^{-hD})L_1^2L_2^{-1}(e^{hD}).
\end{equation*}	
Substituting this identity into \eqref{modif2}, the expression can be reformulated as
\begin{equation}\label{modified-eq}
\begin{aligned}
\frac{1}{h^2}L_1^2 L_2^{-2}(e^{hD})\tilde{y}
=\Psi  \Big(\frac{1}{h}A'(\tilde{y})^{ \intercal}(L_1 L_2^{-1}(e^{hD})\tilde{y})
-\frac{1}{h}L_1 L_2^{-1}(e^{hD})A(\tilde{y})
+F(\tilde{y})\Big).
\end{aligned}
\end{equation}	
Inserting the expansions from \eqref{operators-expasion} into \eqref{modified-eq}, one arrives at
\begin{equation*}
	\begin{aligned}
	\ddot{\tilde{y}}-\frac{1}{6}h^2\tilde{y}^{(4)}+\frac{17}{720}h^4\tilde{y}^{(6)}+\cdots
	=
	&\Psi  \Big(A'(\tilde{y})^{ \intercal}\dot{\tilde{y}}-\frac{1}{12}h^{2}A'(\tilde{y})^{ \intercal}\tilde{y}^{(3)}+\frac{1}{120}
	h^{4}A'(\tilde{y})^{ \intercal}\tilde{y}^{(5)}\\
	&\quad \ -A'(\tilde{y})\dot{\tilde{y}}+\frac{1}{12}h^2(A(\tilde{y}))^{(3)}-\frac{1}{120}h^4(A(\tilde{y}))^{(5)}+F(\tilde{y})+\cdots\Big).
		\end{aligned}
\end{equation*}
 For $h = 0$, the equation reduces to \eqref{charged-particle}, which uses the fact that $\dot{\tilde{y}}\times B(\tilde{y})=\big(A'(\tilde{y})^{\intercal}-A'(\tilde{y})\big)\dot{\tilde{y}}$. Higher-order derivatives (of third order and above) can be recursively eliminated by successive differentiation of the equation followed by setting $h=0$ at each step.
As a result, we obtain a second-order modified differential equation whose right-hand side is a formal series in even powers of $h$: 
\begin{equation}\label{modified}
\ddot{\tilde{y}}=\Psi  \big(\dot{\tilde{y}}\times B(\tilde{y})+F(\tilde{y})+h^2g_2(\tilde{y},\dot{\tilde{y}})+h^4g_4(\tilde{y},\dot{\tilde{y}})+\cdots\big).
\end{equation}
The coefficient functions $g_{2i}(\tilde{y},\dot{\tilde{y}})$ ($i=1,2,\ldots$) are independent of $h$ and uniquely determined.

$\bullet$  \textbf{A formal first integral of the energy $H$.} We show that the modified differential equation \eqref{modified} admits a formal first integral that remains close to the total energy $H$. Rewriting \eqref{modified-eq} and multiplying it with
 $\dot{\tilde{y}}^{\intercal}$
yields
	\begin{equation}\label{modified-eq2}
	\begin{aligned}
		h^{-2}\dot{\tilde{y}}^{\intercal}\Psi^{-1}(L_1^2 L_2^{-2}(e^{hD})\tilde{y})-\dot{\tilde{y}}^{\intercal}F(\tilde{y})
		=h^{-1}\dot{\tilde{y}}^{\intercal}  \big(A'(\tilde{y})^{ \intercal}(L_1 L_2^{-1}(e^{hD})\tilde{y})
		-L_1 L_2^{-1}(e^{hD})A(\tilde{y})
		\big).
	\end{aligned}
\end{equation}	
Following the idea of backward error analysis, we shall show that both sides of \eqref{modified-eq2} are total derivatives. Moreover, there exist functions $H_h(\tilde{y},\dot{\tilde{y}})$ and $H_{2j}(\tilde{y},\dot{\tilde{y}})$ such that $$H_{h}(\tilde{y},\dot{\tilde{y}})=H(\tilde{y},\dot{\tilde{y}})+\sum\limits_{j=1}^{\infty}h^{2j}H_{2j}(\tilde{y},\dot{\tilde{y}}).$$
The argument is demonstrated in the following two steps.

\textsl{(i)} By the definition of $\Psi$, we have $\Psi^{-1}
=I+\big(1-\mathrm{tanc}(h/2))^{-1}\big){\tilde{B}_0}^2.$ Then the left-hand side of  \eqref{modified-eq2}  can be rewritten  as
	\begin{equation*}
	\begin{aligned}
		\dot{\tilde{y}}^{\intercal}\Big(\ddot{\tilde{y}}-\frac{1}{6}h^2\tilde{y}^{(4)}+\frac{17}{720}h^4\tilde{y}^{(6)}+\cdots \Big)
		+\dot{\tilde{y}}^{\intercal}\Big(1-\mathrm{tanc}\Big(\frac{h}{2 }\Big)^{-1}\Big){\tilde{B}_0}^2\Big(\ddot{\tilde{y}}-\frac{1}{6}h^2\tilde{y}^{(4)}+\frac{17}{720}h^4\tilde{y}^{(6)}+\cdots \Big)-\dot{\tilde{y}}^{\intercal}F(\tilde{y}).
	\end{aligned}
\end{equation*}	
For the first term of the above formula, we note that $\dot{\tilde{y}}^{\intercal}\ddot{\tilde{y}}=\frac{1}{2}\frac{\mathrm{d}}{\mathrm{d}t}(\dot{\tilde{y}}^{\intercal}\dot{\tilde{y}})$  and $\dot{\tilde{y}}^{\intercal}\tilde{y}^{(4)}=\frac{\mathrm{d}}{\mathrm{d}t}\big(\dot{\tilde{y}}^{\intercal}\tilde{y}^{(3)}-\frac{1}{2}\ddot{\tilde{y}}^{\intercal}\ddot{\tilde{y}}\big)$. Furthermore, according to \cite{hairer2006}, the higher derivatives satisfy
\begin{equation*}
\dot{\tilde{y}}^{\intercal}\tilde{y}^{(2l)}=\frac{\mathrm{d}}{\mathrm{d}t}\Big(\dot{\tilde{y}}^{\intercal}\tilde{y}^{(2l-1)}-\ddot{\tilde{y}}^{\intercal}\tilde{y}^{(2l-2)}+\ldots \pm \frac{1}{2}\tilde{y}^{(l)\intercal}\tilde{y}^{(l)}\Big).
\end{equation*}	
Hence the left-hand side of the formula \eqref{modified-eq2} can be transformed into
\begin{equation*}\label{left-modified-eq2}
	\frac{\mathrm{d}}{\mathrm{d}t}\Big(\underbrace{\frac{1}{2}\dot{\tilde{y}}^{\intercal}\dot{\tilde{y}}+U(\tilde{y})}_{H(\tilde{y},\dot{\tilde{y}})}+h^2\tilde{g}_2(\tilde{y},\dot{\tilde{y}})+h^4\tilde{g}_4(\tilde{y},\dot{\tilde{y}})+\cdots\Big),
	\end{equation*}	
where we have used the fact that  $F(\tilde{y})=-\nabla_{\tilde{y}}U(\tilde{y})$, the antisymmetric property of $\tilde{B}_{0}$ and the Taylor expansion
$\mathrm{tanc}(h/2)^{-1}=1-h^{2}/12-h^{4}/720+\mathcal{O}(h^6).$

\textsl{(ii)} In what follows, we shall show the right-hand side of formula \eqref{modified-eq2} can also be expressed as the time derivative of an expression of $(\tilde{y},\dot{\tilde{y}})$. Thanks to the Euclidean inner product $	\langle a, b\rangle=a^{\intercal}b $, we have
\begin{equation*}
	\begin{aligned}
	&h^{-1}\dot{\tilde{y}}^{\intercal}  \big(A'(\tilde{y})^{ \intercal}(L_1 L_2^{-1}(e^{hD})\tilde{y})
		-L_1 L_2^{-1}(e^{hD})A(\tilde{y})
		\big)\\
		&=h^{-1} \big(	\langle \dot{\tilde{y}} ,\ A'(\tilde{y})^{ \intercal}(L_1 L_2^{-1}(e^{hD})\tilde{y}) \rangle-\langle L_1 L_2^{-1}(e^{hD})A(\tilde{y}),\ D\tilde{y} \rangle\big)\\
		&=h^{-1} \big(	\langle A'(\tilde{y})\dot{\tilde{y}},\ L_1 L_2^{-1}(e^{hD}\tilde{y}) \rangle-\langle L_1 L_2^{-1}(e^{hD})A(\tilde{y}),\ D\tilde{y} \rangle\big)\\
		&=h^{-1} \big(	\langle DA(\tilde{y}),\ L_1 L_2^{-1}(e^{hD}\tilde{y}) \rangle-\langle L_1 L_2^{-1}(e^{hD})A(\tilde{y}),\ D\tilde{y} \rangle\big).
	\end{aligned}
\end{equation*}	
It follows from \cite{Hairer2017-2} that whenever $f$ is analytic at 0 and $u$ and $v$ are smooth functions, $\langle f(hD)u,v \rangle-\langle u,f(-hD)v \rangle$ is a total time derivative up to $\mathcal{O}(h^N)$ for arbitrary $N$. By setting $f(hD)=L_1 L_2^{-1}(e^{hD})/(hD)$, $u=DA(z)$ and $v=Dz$, we obtain  $\langle DA(\tilde{y}),\ L_1 L_2^{-1}(e^{hD}\tilde{y}) \rangle-\langle L_1 L_2^{-1}(e^{hD})A(\tilde{y}),\ D\tilde{y} \rangle$ is a total time derivative of $(\tilde{y},\dot{\tilde{y}})$ up to $\mathcal{O}(h^N)$. Moreover, inserting the expansions from \eqref{operators-expasion} into the right-hand side of \eqref{modified-eq2}, one finds that it is of order $\mathcal{O}(h^2)$ and can be represented as a formal series in even powers of $h$. Therefore, we conclude that equation \eqref{modified-eq2} is a total derivative and that there exists a function 
\begin{equation*}
H_{h}(\tilde{y},\dot{\tilde{y}})=H(\tilde{y},\dot{\tilde{y}})+h^2H_{2}(\tilde{y},\dot{\tilde{y}})+h^4H_{4}(\tilde{y},\dot{\tilde{y}})+\cdots,
\end{equation*}
such that its truncation at the $\mathcal{O}(h^N)$ term satisfies
$\frac{\mathrm{d}}{\mathrm{d}t}H_{h}(\tilde{y},\dot{\tilde{y}})=\mathcal{O}(h^N)$ along the  solutions of equation \eqref{modified}. 

$\bullet$  \textbf{From short to long time intervals.}
We are now prepared to prove that the energy 
$H$ is nearly preserved by FVI over long time intervals. This is achieved by decomposing the whole interval into subintervals of length $h$ and patching together the short-time near-conservation results derived above. Assume that the numerical solution stays in a compact set independent of $h$.
From the short-time analysis, we have the estimates
\begin{equation*}
	\abs{H(\tilde{y}(t),\dot{\tilde{y}}(t))-H_{h}(\tilde{y}(t),\dot{\tilde{y}}(t))}\leq Ch^2,
	\qquad
	\abs{H_{h}(\tilde{y}(t),\dot{\tilde{y}}(t))-H_{h}(\tilde{y}(0),\dot{\tilde{y}}(0))}\leq cth^N,
\end{equation*}
where the constants $C$ and $c$ are independent of the final time $T$ and $h$. Using these estimates and applying a telescoping sum, we obtain
\begin{equation*}
	\begin{aligned}
		&\abs{H(\tilde{y}(nh),\dot{\tilde{y}}(nh))-H(\tilde{y}(0),\dot{\tilde{y}}(0))}=\Bigl| H(\tilde{y}(nh),\dot{\tilde{y}}(nh))-H_{h}(\tilde{y}(nh),\dot{\tilde{y}}(nh))+\sum_{j=1}^{n}\big(H_{h}(\tilde{y}(jh),\dot{\tilde{y}}(jh))\\
		&\ \ -H_{h}(\tilde{y}((j-1)h),\dot{\tilde{y}}((j-1)h))\big)+H_{h}(\tilde{y}(0),\dot{\tilde{y}}(0))-H(\tilde{y}(0),\dot{\tilde{y}}(0))\Bigr|\leq Ch^2+cnh^{N+1}.
	\end{aligned}
\end{equation*}
Therefore, for any fixed integer $N \geq 3$ and for $T = nh$ satisfying $nh \leq c h^{-N+2}$, one has
\begin{equation*}
	\abs{H(\tilde{y}(nh),\dot{\tilde{y}}(nh))-H(\tilde{y}(0),\dot{\tilde{y}}(0))}
	\leq Ch^2,
\end{equation*}
where  $C$ is independent of the final time $T$.

We next prove that the energy is nearly preserved along the filtered two-step variational integrator.
Based on the idea of the backward error analysis, we know that the numerical solution $x_n$ and the exact solution $p(nh)$ of the truncated modified equation satisfy $x_n=p(nh)+\mathcal{O}(h^N)$. Using the expression $\tilde{y}(nh)=(p(nh)+p(nh+h)
)/2$, this directly leads to $x_{n+1/2}=\tilde{y}(nh)+\mathcal{O}(h^N)$. 
For the velocity approximation, one can write
$$v_{n+1/2}=\Phi(x_{n+1}-x_{n})/h=\Phi(p(nh+h)-p(nh))/h+\mathcal{O}(h^N).$$
Applying the operator $h^{-1}L_1L_2^{-1}(e^{hD})$ to this expression yields
$$
v_{n+1/2}
=\Phi\big(h^{-1}L_1L_2^{-1}(e^{hD})\tilde{y}(nh)\big)
+\mathcal{O}(h^N)
= \Phi\Big(\dot{\tilde{y}}(nh)
-\frac{1}{12}h^2\dddot{\tilde{y}}(nh)
+\frac{1}{120}h^4\tilde{y}^{(5)}(nh)
+\cdots\Big)
+\mathcal{O}(h^N).
$$
In view of the definition of $\Phi$, this reduces to
$$
v_{n+1/2}=\dot{\tilde{y}}(nh)+\mathcal{O}(h^2).
$$ Using the Lipschitz continuity of $H$, it follows that
\begin{equation*}
\abs{H(x_{n+1/2},v_{n+1/2})-H(\tilde{y}(nh),\dot{\tilde{y}}(nh))}\leq Ch^2, \qquad \abs{H(x_{1/2},v_{1/2})-H(\tilde{y}(0),\dot{\tilde{y}}(0))}\leq Ch^2.
\end{equation*}
Therefore, for any fixed integer $N \geq 3$ and for $T = nh$, we have
$$\abs{H(x_{n+1/2},v_{n+1/2})-H(x_{1/2},v_{1/2})}\leq Ch^2,$$
where $C$ is independent of the final time $T$ as long as $nh \leq c h^{-N+2}$. This establishes the statement of Theorem \ref{conservation-H1}.

\subsubsection{Long-time  near-conservation of momentum (Proof of Theorem \ref{conservation-M1})} \label{sec4.3} 
Similar to the analysis in Section~\ref{sec4.2}, we only show that the modified differential equation \eqref{modified} admits a formal first integral that remains close to the momentum $M$,
and the long-time near-conservation of momentum then follows in the same way as for the energy.

Using the invariance conditions \eqref{U-A}, one readily verifies that $A'(\tilde{y})S\tilde{y}=SA(\tilde{y})$ and $\tilde{y}^{\intercal}S\nabla_{\tilde{y}} U(\tilde{y})=0$.
Multiplying \eqref{modified-eq}  with $(S\tilde{y})^{\intercal}$ yields
\begin{equation}\label{modified-eq3}
	\begin{aligned}
		-h^{-2}(S\tilde{y})^{\intercal}\Psi^{-1} (L_1^2L_2^{-2}(e^{hD})\tilde{y})
		&=h^{-1}(S\tilde{y})^{\intercal}A'(\tilde{y})^{ \intercal}(L_1 L_2^{-1}(e^{hD})\tilde{y})
		-h^{-1}(S\tilde{y})^{\intercal}L_1 L_2^{-1}(e^{hD})A(\tilde{y})
		+(S\tilde{y})^{\intercal}F(\tilde{y}).
	\end{aligned}
\end{equation}	
We will show that both sides of \eqref{modified-eq3} can be written as total derivatives, and there exist functions $M_h(\tilde{y},\dot{\tilde{y}})$ and $M_{2j}(\tilde{y},\dot{\tilde{y}})$ such that $$M_{h}(\tilde{y},\dot{\tilde{y}})=M(\tilde{y},\dot{\tilde{y}})+\sum\limits_{j=1}^{\infty}h^{2j}M_{2j}(\tilde{y},\dot{\tilde{y}}).$$This result is established in the following three steps.

\textsl{(i)} The last term of the formula \eqref{modified-eq3} cancels by using $F(\tilde{y})=-\nabla_{\tilde{y}}U(\tilde{y})$ and $\tilde{y}^{\intercal}S\nabla_{\tilde{y}} U(\tilde{y})=0$. Based on the antisymmetric property of $S$ and the fact that $A'(\tilde{y})S\tilde{y}=SA(\tilde{y})$, the right-hand side of the formula \eqref{modified-eq3} becomes
\begin{equation*}
	\begin{aligned}
		&h^{-1}\big((S\tilde{y})^{\intercal}A'(\tilde{y})^{ \intercal}(L_1 L_2^{-1}(e^{hD})\tilde{y})-(S\tilde{y})^{\intercal}L_1 L_2^{-1}(e^{hD})\tilde{y}A(\tilde{y})\big)
		 =h^{-1}\big((A'(\tilde{y})S\tilde{y})^{\intercal}(L_1 L_2^{-1}(e^{hD})\tilde{y})\\
		 &\ \ +\tilde{y}^{\intercal}L_1 L_2^{-1}(e^{hD})SA(\tilde{y})\big)
	 =h^{-1}\big((SA(\tilde{y}))^{\intercal}(L_1 L_2^{-1}(e^{hD})\tilde{y})+\tilde{y}^{\intercal}L_1 L_2^{-1}(e^{hD})SA(\tilde{y})\big).
	\end{aligned}
\end{equation*}
 By the definition of Euclidean inner product, we obtain
\begin{equation*}
	\begin{aligned}
	\tilde{y}^{\intercal}L_1 L_2^{-1}(e^{hD})SA(\tilde{y})=(L_1 L_2^{-1}(e^{hD})SA(\tilde{y}))^{\intercal}\tilde{y}
		=  \langle L_1 L_2^{-1}(e^{hD})SA(\tilde{y}) ,\ \tilde{y}  \rangle=-\langle L_1 L_2^{-1}(e^{-hD})SA(\tilde{y}) ,\ \tilde{y}  \rangle.
	\end{aligned}
\end{equation*}
Since $L_1 L_2^{-1}(e^{hD})$ is an odd operator, the right-hand side of \eqref{modified-eq3} can be rewritten as
\begin{equation*}
	 h^{-1}\big(\langle SA(\tilde{y}) ,\ L_1 L_2^{-1}(e^{hD})\tilde{y} \rangle - \langle L_1 L_2^{-1}(e^{-hD})SA(\tilde{y}) ,\ \tilde{y}  \rangle\big),
\end{equation*}
which is a total derivative up to  $\mathcal{O}(h^{N-1})$ for arbitrary $N$.

\textsl{(ii)}  Combining the expression of $\Psi^{-1}$ with the  expansion \eqref{operators-expasion}, the left side of the formula \eqref{modified-eq3} becomes
\begin{equation*}\label{M-total}
	-{\tilde{y}}^{\intercal}S\Big(\ddot{\tilde{y}}-\frac{1}{6}h^2\tilde{y}^{(4)}+\frac{17}{720}h^4\tilde{y}^{(6)}+\cdots \Big)
		-{\tilde{y}}^{\intercal}S\Big(1-\mathrm{tanc}\Big(\frac{h}{2 }\Big)^{-1}\Big){\tilde{B}_0}^2\Big(\ddot{\tilde{y}}-\frac{1}{6}h^2\tilde{y}^{(4)}+\frac{17}{720}h^4\tilde{y}^{(6)}+\cdots \Big).
\end{equation*}	
Note that the term $\tilde{y}^{\intercal}S\tilde{y}^{(2l)}$ and $\tilde{y}^{\intercal}S{\tilde{B}_0}^2\tilde{y}^{(2l)}$ are total derivatives as
$$\tilde{y}^{\intercal}S\tilde{y}^{(2l)}=\frac{\mathrm{d}}{\mathrm{d}t}\big(\tilde{y}^{\intercal}S\tilde{y}^{(2l-1)}-\dot{\tilde{y}}^{\intercal}S\tilde{y}^{(2l-2)}
+\cdots\mp\tilde{y}^{(l-1)}S\tilde{y}^{(l)}\big),$$
and $$\tilde{y}^{\intercal}S{\tilde{B}_0}^2\tilde{y}^{(2l)}=\frac{\mathrm{d}}{\mathrm{d}t}\big(\tilde{y}^{\intercal}S{\tilde{B}_0}^2\tilde{y}^{(2l-1)}-\dot{\tilde{y}}^{\intercal}S{\tilde{B}_0}^2\tilde{y}^{(2l-2)}
+\cdots\mp\tilde{y}^{(l-1)}S{\tilde{B}_0}^2\tilde{y}^{(l)}\big),$$
which is satisfied under the assumption $S{\tilde{B}_0}^2={\tilde{B}_0}^{2}S$. We then get that the left side of the formula \eqref{modified-eq3} is a total derivative.

\textsl{(iii)} From the results of (i) and (ii), it follows that \eqref{modified-eq3} can be expressed as a total derivative. We now proceed to identify the terms that constitute the time derivative of $M(\tilde{y},\dot{\tilde{y}})$.
 Based on the definition of $\Psi$ and the expansions of $L_1^2L_2^{-2}(e^{hD})$ and $L_1L_2^{-1}(e^{hD})$, the formula \eqref{modified-eq3} can be rewritten as 
\begin{equation}\label{M-total}
	\begin{aligned}
		&-{\tilde{y}}^{\intercal}S\Big(\ddot{\tilde{y}}-\frac{1}{6}h^2\tilde{y}^{(4)}+\frac{17}{720}h^4\tilde{y}^{(6)}+\cdots \Big)
		-{\tilde{y}}^{\intercal}S\Big(1-\mathrm{tanc}\Big(\frac{h}{2 }\Big)^{-1}\Big){\tilde{B}_0}^2\Big(\ddot{\tilde{y}}-\frac{1}{6}h^2\tilde{y}^{(4)}+\frac{17}{720}h^4\tilde{y}^{(6)}+\cdots \Big)\\
		&=-{\tilde{y}}^{\intercal}S\big(A'(\tilde{y})^{\intercal}\dot{\tilde{y}}-A'(\tilde{y})\dot{\tilde{y}}\big)-\frac{h^{2}}{12}{\tilde{y}}^{\intercal}S\big(A'(\tilde{y})^{\intercal}\tilde{y}^{(3)}+(A(\tilde{y}))^{(3)}\big)-\frac{h^{4}}{120}{\tilde{y}}^{\intercal}S\big(A'(\tilde{y})^{\intercal}\tilde{y}^{(5)}-(A(\tilde{y}))^{(5)}\big)+\cdots.
	\end{aligned}
\end{equation}	
With the help of $\dot{\tilde{y}}\times B(\tilde{y})=\big(A'(\tilde{y})^{\intercal}-A'(\tilde{y})\big)\dot{\tilde{y}}$, $\tilde{y}^{\intercal}S\big(\dot{\tilde{y}}\times B(\tilde{y})\big)=-\frac{\mathrm{d}}{\mathrm{d}t}\big(\tilde{y}^{\intercal}SA(\tilde{y})\big)$ and
 ${\tilde{y}}^{\intercal}S\ddot{\tilde{y}}=\frac{\mathrm{d}}{\mathrm{d}t}(\tilde{y}^{\intercal}S\dot{\tilde{y}})$, the formula \eqref{M-total} turns to
 \begin{equation*}
 	\begin{aligned}
 		\frac{\mathrm{d}}{\mathrm{d}t}\big(\overbrace{A(\tilde{y})^{\intercal}S\tilde{y}+\dot{\tilde{y}}^{\intercal}S\tilde{y}}^{M(\tilde{y},\dot{\tilde{y}})}\big)
 	&={\tilde{y}}^{\intercal}S\Big(1-\mathrm{tanc}\Big(\frac{h}{2 }\Big)^{-1}\Big){\tilde{B}_0}^2\Big(\ddot{\tilde{y}}-\frac{1}{6}h^2\tilde{y}^{(4)}+\frac{17}{720}h^4\tilde{y}^{(6)}+\cdots \Big)\\
 		&\quad	+h^{2}\Big(-\frac{1}{6}{\tilde{y}}^{\intercal}S\tilde{y}^{(4)}+\frac{1}{12}{\tilde{y}}^{\intercal}SA'(\tilde{y})^{\intercal}\tilde{y}^{(3)}+\frac{1}{12}{\tilde{y}}^{\intercal}S(A(\tilde{y}))^{(3)}\Big)\\
 		&\quad
 		+h^{4}\Big(\frac{17}{720}{\tilde{y}}^{\intercal}S\tilde{y}^{(6)}+\frac{1}{120}{\tilde{y}}^{\intercal}SA'(\tilde{y})^{\intercal}\tilde{y}^{(5)}-\frac{1}{120}{\tilde{y}}^{\intercal}S(A(\tilde{y}))^{(5)}\Big)+\cdots.
 	\end{aligned}
 \end{equation*}	
Therefore, based on the Taylor expansion of $\mathrm{tanc}(h/2)^{-1}$, we obtain $h$-independent functions $M_{2j}(\tilde{y},\dot{\tilde{y}})$
such that the function
	\begin{equation*}
	M_{h}(\tilde{y},\dot{\tilde{y}})=M(\tilde{y},\dot{\tilde{y}})+h^2M_{2}(\tilde{y},\dot{\tilde{y}})+h^4M_{4}(\tilde{y},\dot{\tilde{y}})+\cdots,
\end{equation*}
truncated at the $\mathcal{O}(h^{N})$ term, satisfies $\frac{\mathrm{d}}{\mathrm{d}t}M_{h}(\tilde{y},\dot{\tilde{y}})=\mathcal{O}(h^N)$ along solutions of the modified differential equation \eqref{modified}.  A similar argument as in Section~\ref{sec4.2} then  leads to the result of Theorem \ref{conservation-M1}.

\section{Error bounds and long-term analysis in a strong magnetic field (Proof of Theorems \ref{error bound 2}--\ref{conservation-M2})  } \label{sec5}
This section is devoted to analyzing  the error bounds and long-time near-conservation of energy and magnetic moment under a strong magnetic field given in Section \ref{sec3.2}.  The primary analytical tool is modulated Fourier expansion which was developed in  \cite{hairer2000,hairer2006}, it is a powerful  tool for studying highly oscillatory differential equations. 
The basic idea of modulated Fourier expansion is to decompose both the exact and the numerical solutions into a slowly evolving component and highly oscillatory terms, where the oscillations are characterized by trigonometric functions with slowly varying modulation functions. To illustrate this idea, we represent the solution of \eqref{charged-particle}
in the form of a modulated Fourier expansion
\begin{equation}\label{exact-MFE-fram}
	x(t)
	=
	y^{0}(t)
	+
	\sum_{0<\abs{k}<N} \mathrm{e}^{\mathrm{i} k t / \varepsilon} \, y^{k}(t)
	+
	R_{N}(t),
\end{equation}
where $N \ge 2$ is the truncation index,
$y^{k}(t)$ are slowly varying modulation functions,
and $R_{N}(t)$ denotes the defect that results from substituting the truncated expansion into equation~\eqref{charged-particle}.
Here $y^{0}(t)$ is a real-valued function, while $y^{k}(t)$   are complex-valued modulation functions.
Since the solution $x(t)$ is real-valued, it follows that 
$
y^{-k}(t)=\overline{y^{k}(t)}.$ In addition, the modulation functions $y^{k}(t)$ for $0 \leq \abs{k} <N$  are required to be smooth in the sense that all their time derivatives are bounded independently of $\varepsilon$.

To determine the modulation functions, the truncated expansion
\eqref{exact-MFE-fram} without the remainder is inserted into equation~\eqref{charged-particle},
the nonlinearity is expanded around $y^{0}(t)$, and the coefficients of $\mathrm{e}^{\mathrm{i} k t / \varepsilon}$ are compared. This procedure leads to a system of asymptotic differential and algebraic equations for the coefficient functions $y^{k}(t)$, $0 \leq |k| < N$,  together with the corresponding initial conditions. It then follows that the modulation functions are uniquely determined up to the truncation order, and bounds for both the modulation functions and the remainder term can be obtained. This construction of the modulation system provides an analytical framework for the error analysis and for investigating the long-time behavior of the numerical method.  Within this framework, we rigorously establish error bounds by comparing the modulated Fourier expansions of the exact and numerical solutions. Moreover, the modulation system is shown to admit two almost-invariants, one associated with the total energy and the other with the magnetic moment. In the following subsections, we first derive the modulated Fourier expansions of the exact and numerical solutions and then establish the corresponding error bounds and long-time near-conservation results.

\subsection{Modulated Fourier expansions and proof of error bounds} \label{sec5-MFE-exac}
To establish the error bounds stated in Section~\ref{sec3.2}, we employ the modulated Fourier expansion technique.
For clarity, the analysis is organized into the following three parts:
\begin{itemize}
    \item Section \ref{MEF-exact} presents the modulated Fourier expansion of the exact solution.
    \item Section \ref{MFE-method} derives the modulated Fourier expansion corresponding to the numerical solution produced by the FVI method.
    \item Section \ref{error-proof} establishes error bounds for FVI by comparing the modulated Fourier expansions of the exact and numerical solutions.
\end{itemize}

\subsubsection{Modulated Fourier expansion of the exact solution}\label{MEF-exact}
To analyze the highly oscillatory structure induced by the strong magnetic field more accurately, we first diagonalize the linear operator $v \mapsto v \times B_0$. Within the modulated Fourier expansion framework developed for charged-particle dynamics (see, e.g., \cite{Hairer2018}), we denote the
eigenvalues and corresponding normalized eigenvectors of this operator by $\lambda_{-1} = -i$, $\lambda_0 = 0$, and $\lambda_1 = i$, with $v_{-1} = \overline{v_1}$, $v_0 = B_0$, and $v_1$. Let $P_j=v_{j}v_{j}^{*}$ denote the orthogonal projection onto each eigenspace, where $v_{j}^{*}=(\overline{v_{j}})^{\intercal}$. These projections satisfy the identities $P_{-1} + P_0 + P_1 = I$, $P_0(v \times B_0) = 0$, and $P_{\pm 1}(v \times B_0) = \pm i P_{\pm 1} v$. Furthermore, the coefficient functions $y^{k}$ (with the time variable $t$ omitted) in the expansion~\eqref{exact-MFE-fram} can be expressed in the basis $(v_j)$ associated with the projections $P_j$  
as 
$$y^{k}=y^{k}_{-1}+y^{k}_{0}+y^{k}_{1},\quad y^{k}_{j}=P_{j}y^{k},\quad \textmd{for}\ \ j=-1,0,1.$$
The following theorem presents the modulated Fourier expansion for the exact solution of the CPD \eqref{charged-particle}.

\begin{mytheo}\label{Theoexact}(\cite{Lubich2022})
	Let $x(t)$ be an exact solution of \eqref{charged-particle}, it is assumed that initial velocity is bounded as \eqref{velocity} and $x(t)$ stays in a compact set $K$ which is independent of $\varepsilon$ for $0 \leq t \leq T $. Then consider its modulated Fourier expansion
	\begin{equation}\label{exact-MFE}
		x(t)=\sum_{\abs{k} \leq N}e^{ikt/  \varepsilon}y^{k}(t)+R_N(t)
	\end{equation}
	with an arbitrary truncation index $N \geq 1$. 
	The modulated Fourier expansion \eqref{exact-MFE} has the following properties:
	
	\textsl{(a)} The coefficient functions $y^{k}$ together with their derivatives (up to order $N$) are bounded as $y_{j}^{0} = \mathcal{O}(1)\ \text{for}\ j \in \{-1,0,1\}$,
	$y_{\pm 1}^{\pm 1}=\mathcal{O}(\varepsilon)$,
	and for the remaining $(k, j)$ with $\abs{k} \leq N$,
	$y_{j}^{k}=\mathcal{O}(\varepsilon^{\abs{k}+1}).$
	They are unique up to $\mathcal{O}(\varepsilon^{N+1})$ and are chosen to satisfy
	$y_{-j}^{-k}=\overline{y_{j}^{k}}$. Moreover, $\dot{y}_{\pm 1}^{0}$ together with its derivatives is bounded as $\dot{y}_{\pm 1}^{0}=\mathcal{O}(\varepsilon)$.
	
	\textsl{(b)} The remainder term and its derivatives are bounded by
	\begin{equation*}
		R_N(t)=\mathcal{O}(t^2\varepsilon^{N}),\ \ \ \dot{R}_N(t)=\mathcal{O}(t\varepsilon^{N})\quad \textmd{for} \ \ 0 \leq t \leq T.
	\end{equation*}

	\textsl{(c)} The modulation functions $y_0^0$, $y_{\pm 1}^0$, $y_{1}^{1}$, $y_{-1}^{-1}$ satisfy the following  expressions
	\begin{equation*}
		\begin{aligned}
			&\ddot{y}_{0}^{0}=P_{0}\big(\dot{y}^0 \times B_1(y^0)+F(y^0)\big)+2P_{0}Re\Big(\frac{i}{\varepsilon}y^1 \times B'_{1}(y^0)y^{-1}\Big)+\mathcal{O}(\varepsilon^2),\\
			&\dot{y}_{\pm 1}^0=\pm i  \varepsilon  P_{\pm 1}\big(\dot{y}^0 \times B_1(y^0)+F(y^0)\big)+\mathcal{O}(\varepsilon^2),\quad \
            \dot{y}_{\pm 1}^{\pm 1}=P_{\pm 1}\big(y_{\pm 1}^{\pm 1} \times B_1(y^0)\big)+\mathcal{O}( \varepsilon^2).
		\end{aligned}
	\end{equation*}	
	All other  functions  $y_{j}^{k}$ are obtained from algebraic expressions depending on
	$y^0$, $\dot{y}_{0}^0$, $y_{1}^{1}$, $y_{-1}^{-1}$.
	
	\textsl{(d)} Initial values for the differential equations of item \textsl{(c)} are derived by
	\begin{equation*}
		\begin{aligned}
			&y^{0}(h/2)=x(h/2)+\varepsilon \dot{x}(h/2) \times B_{0}+\mathcal{O}(\varepsilon^2),\quad \ y_{\pm 1}^{\pm 1}(h/2)=\mp i\varepsilon e^{\mp ih/2\varepsilon}P_{\pm 1}\dot{x}(h/2)+\mathcal{O}(\varepsilon ^2),\\
			&\dot{y}_{0}^0(h/2)=P_0\dot{x}(h/2)
			-\varepsilon P_0 \big((\dot{x}(h/2) \times B_0)\times B_1(x(h/2))\big)
			+\mathcal{O}(\varepsilon^2).
		\end{aligned}
	\end{equation*}		
	The constants symbolized by the $\mathcal{O}$-notation  depend on $K$, $T$, $N$, on the velocity bound $\hat{C}$ and  on the bounds of derivatives of $B_1$ and $F$, but they are 	independent of $\varepsilon$	and $t$ with $ 0 \leq t \leq T$.
	\end{mytheo}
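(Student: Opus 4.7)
The plan is to construct the modulation functions by formally substituting the ansatz \eqref{exact-MFE} into \eqref{charged-particle}, to derive differential and algebraic equations for them by separating Fourier modes, to fix their initial values by matching $(x(t_{0}),\dot{x}(t_{0}))$, and to control the truncation remainder by a Gronwall argument after passing to rotating coordinates. First, inserting $x(t)=\sum_{|k|\le N}e^{ikt/\varepsilon}y^{k}(t)$, Taylor-expanding $B_{1}(x)$ and $F(x)$ about $y^{0}$, collecting the coefficient of $e^{ikt/\varepsilon}$, and then applying the spectral projections $P_{j}$ (which diagonalize $v\mapsto v\times B_{0}$ with eigenvalues $ij$ for $j\in\{-1,0,1\}$) produces the scalar system
\begin{equation*}
P_{j}\ddot{y}^{k}+\frac{(2k-j)\,i}{\varepsilon}\,P_{j}\dot{y}^{k}+\frac{k(j-k)}{\varepsilon^{2}}\,P_{j}y^{k}=P_{j}\mathcal{N}^{k}(y^{\bullet},\dot{y}^{\bullet}),
\end{equation*}
where $\mathcal{N}^{k}$ is the $e^{ikt/\varepsilon}$ Fourier coefficient of $\dot{x}\times B_{1}(x)+F(x)$ with $x$ and $\dot{x}$ replaced by the ansatz.

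The second step is to classify these equations by the singular structure of the leading coefficients. When $k(j-k)\ne 0$ the relation is \emph{algebraic}: direct inversion yields $y_{j}^{k}=\varepsilon^{2}\bigl(k(j-k)\bigr)^{-1}P_{j}\mathcal{N}^{k}+\mathcal{O}(\varepsilon^{3})$. When $k(j-k)=0$ but $2k-j\ne 0$, the equation is of first order in $\dot{y}^{k}$ after moving the $\ddot{y}^{k}$-term to the right as a lower-order correction; this covers the cases $(k,j)=(0,\pm 1)$, which give the drift equations $\dot{y}_{\pm 1}^{0}=\pm i\varepsilon\,P_{\pm 1}\mathcal{N}^{0}+\mathcal{O}(\varepsilon^{2})$ listed in item \textsl{(c)}, and the cases $(\pm 1,\pm 1)$, where the dominant contribution $(i/\varepsilon)y^{\pm 1}\times B_{1}(y^{0})$ to $\mathcal{N}^{\pm 1}$ is exactly what reproduces the equation for $\dot{y}_{\pm 1}^{\pm 1}$. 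The fully non-singular case $(0,0)$ yields the second-order ODE for $y_{0}^{0}$. The claimed scalings $y_{j}^{0}=\mathcal{O}(1)$, $y_{\pm 1}^{\pm 1}=\mathcal{O}(\varepsilon)$, $y_{j}^{k}=\mathcal{O}(\varepsilon^{|k|+1})$ are then established by a bootstrap: insert an ansatz with these sizes into every $\mathcal{N}^{k}$, check that the right-hand side is consistent with the prefactor produced by inversion or integration, and iterate to remove corrections of order $\varepsilon^{N+1}$. Derivative bounds up to order $N$ follow by differentiating the modulation equations and repeating the bootstrap.

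Initial values for the finitely many ODEs of item \textsl{(c)} are fixed by enforcing $\sum_{|k|\le N}e^{ikt_{0}/\varepsilon}y^{k}(t_{0})=x(t_{0})$ and the analogous identity for $\dot{x}(t_{0})$ modulo $\mathcal{O}(\varepsilon^{N+1})$, then projecting mode-by-mode and using the already-obtained algebraic expressions for the non-ODE modes; the resulting triangular system, expanded in powers of $\varepsilon$, yields item \textsl{(d)}. For the remainder $R_{N}(t):=x(t)-\sum_{|k|\le N}e^{ikt/\varepsilon}y^{k}(t)$, the truncated ansatz solves \eqref{charged-particle} up to a defect of size $\mathcal{O}(\varepsilon^{N})$ by construction, so $R_{N}$ satisfies a linear perturbation of \eqref{charged-particle} with $\mathcal{O}(\varepsilon^{N+1})$ initial data and $\mathcal{O}(\varepsilon^{N})$ forcing; passing to rotating coordinates that diagonalize $(\,\cdot\,)\times B_{0}/\varepsilon$ removes the fast rotation in a norm-preserving way, and two applications of Gronwall's inequality give $R_{N}(t)=\mathcal{O}(t^{2}\varepsilon^{N})$ together with $\dot{R}_{N}(t)=\mathcal{O}(t\varepsilon^{N})$, as in item \textsl{(b)}.

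The main obstacle is the bookkeeping of the bootstrap for the nonlinear coupling. The convolution structure of $\mathcal{N}^{k}$, combined with the factor $ik'/\varepsilon$ hidden inside $\dot{x}$, couples modes across indices and apparently loses a power of $\varepsilon$ per differentiation. Verifying that the precise scalings $\varepsilon^{|k|+1}$ together with the $\varepsilon^{2}$ prefactor from algebraic inversion close the hierarchy uniformly in $|k|\le N$ is the technical heart of the proof, and it rests on the fact that the eigenvalues of $v\mapsto v\times B_{0}$ are exactly $\{-i,0,i\}$, so that only the simple resonances $(k,j)\in\{(0,0),(\pm 1,\pm 1)\}$ arise and no small divisor worse than $|k(j-k)|=\mathcal{O}(1)$ appears.
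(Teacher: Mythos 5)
Your proposal is correct and follows essentially the same route as the paper: insert the ansatz, compare coefficients of $e^{ikt/\varepsilon}$, apply the projections $P_j$, classify the resulting equations by the resonance structure (algebraic for $k(j-k)\neq 0$, first-order ODEs for $(0,\pm1)$ and $(\pm1,\pm1)$, second-order for $(0,0)$), fix initial data by mode-by-mode matching, and control the remainder by a defect-plus-Gronwall argument in rotating coordinates. Your consolidated projected equation with coefficients $(2k-j)i/\varepsilon$ and $k(j-k)/\varepsilon^2$ is a correct repackaging of the paper's case-by-case treatment, and your sketch of parts (a)--(b) fills in exactly what the paper delegates to the cited reference of Hairer and Lubich.
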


\begin{proof} The result is stated in \cite{Lubich2022}, 
but a detailed proof is not given there. Since its structure is instructive for the subsequent analysis, we outline the main arguments here.

\textsl{(a)} and \textsl{(b)}
The bounds in \textsl{(a)} and \textsl{(b)} follow from an argument similar to that
of Theorem~4.1 in \cite{Hairer2018}, which establishes the existence of the modulated Fourier expansion together with bounds for the modulation functions and the remainder term.
While \cite{Hairer2018} treats a strong magnetic field with state-dependent frequencies and projections, the present work considers a mildly strong magnetic field and uses a constant frequency and fixed projections $P_j$. The details are therefore omitted and we refer to \cite{Hairer2018}. For the error bounds in Theorem~\ref{error bound 2}, explicit differential
equations for the dominant modulation functions $y_0^0$, $y_{\pm 1}^0$ and $y_{\pm 1}^{\pm 1}$ are required, and hence only the differential equations in \textsl{(c)} and the corresponding initial values in \textsl{(d)} are stated.

\textsl{(c)} Inserting the expansion \eqref{exact-MFE} with removed reminder term into the differential equation \eqref{charged-particle} and comparing the coefficients of $e^{ikt/  \varepsilon}$ yields
\begin{equation}\label{CPD-MFE}
\ddot{y}^{k}+2\frac{ik}{\varepsilon}\dot{y}^{k}-\frac{k^2}{\varepsilon^2}{y}^{k}=\sum_{k_1+k_2=k}\Big(\dot{y}^{k_1}+\frac{ik_1}{\varepsilon}{y}^{k_1}\Big)\times\sum_{ \underset{s(\alpha)=k_2 }{m \geq 0  }}\frac{1}{m!}B^{(m)}(y^0){\pmb{y}}^{\alpha}+\sum_{ \underset{s(\alpha)=k }{m \geq 0  }}F^{(m)}(y^0){\pmb{y}}^{\alpha},
\end{equation}
where
$\alpha=(\alpha_1,\ldots, \alpha_m )$ is a multi-index with
$\alpha_j \in \mathbb{Z} \backslash \{0\} $, $s(\alpha)=\alpha_1+\cdots +\alpha_m $,
$\abs{\alpha}=\abs{\alpha_1}+\cdots +\abs{\alpha_m} $ and $\pmb{y}^{\alpha}=(y^{\alpha_1},\ldots, y^{\alpha_m})$.

In view of \eqref{CPD-MFE} for $k=0$ and the bound of $y^k$, we have
\begin{equation*}
\ddot{y}^{0}=\dot{y}^{0}\times B(y^0)+2Re\big(i\varepsilon^{-1}y^1 \times B'(y^0)y^{-1}\big)+F(y^0)+\mathcal{O}(\varepsilon ^2).
\end{equation*}
Then taking the projection $P_0$ on both sides and noting that $P_0(\dot{y}^0\times B_0)=0$, we arrive at the stated first differential equation for $y_0$. Moreover, from $P_{\pm 1}(\dot{y}^0\times B_0)=\pm i \dot{y}_{\pm 1}^{0}$, it follows that
\begin{equation*}
	\begin{aligned}
	P_{\pm 1}\ddot{y}^{0}
	&=P_{\pm 1}\big(\dot{y}^{0}\times \varepsilon^{-1}B_0\big)+P_{\pm 1}\big(\dot{y}^{0}\times B_{1}(y^0)+F(y^0)\big)+\mathcal{O}(\varepsilon)\\
	&=\pm i\varepsilon^{-1}\dot{y}_{\pm 1}^{0}+P_{\pm 1}\big(\dot{y}^{0}\times B_{1}(y^0)+F(y^0)\big)+\mathcal{O}(\varepsilon).
	\end{aligned}
\end{equation*}
We then have the first-order differential equation for $y_{\pm 1}^{0}$
under the bounds $\dot{y}_{\pm 1}^{0}=\mathcal{O}(\varepsilon)$.

Inserting $k=\pm 1$ into \eqref{CPD-MFE} gives
\begin{equation}\label{CPD-MFE2}
\pm 2i\varepsilon^{-1}\dot{y}^{\pm 1}-\varepsilon^{-2}y^{\pm 1} =\big(\dot{y}^{\pm 1}\pm i\varepsilon^{-1}y^{\pm 1} \big)\times B(y^0)
+\mathcal{O}(\varepsilon).
\end{equation}
Multiplying  \eqref{CPD-MFE2} with $P_{\pm1}$, we find
\begin{equation*}
	\pm 2i\varepsilon^{-1}\dot{y}_{\pm 1}^{\pm 1}-\varepsilon^{-2}y_{\pm 1}^{\pm 1} =\pm i\varepsilon^{-1}P_{\pm1}\big(\dot{y}^{\pm 1}\pm i\varepsilon^{-1}y^{\pm 1} \big)+ P_{\pm1}\big(\dot{y}^{\pm 1}\pm i\varepsilon^{-1}y^{\pm 1} \big)\times B_{1}(y^0)
	+\mathcal{O}(\varepsilon).
\end{equation*}
Since the $\varepsilon^{-2}$-terms cancel on both sides,  the $\varepsilon^{-1}$-terms then become dominant, which leads to
\begin{equation*}
\dot{y}_{\pm 1}^{\pm 1}=\mp i \varepsilon \big( P_{\pm1}\big(\dot{y}^{\pm 1}\pm i\varepsilon^{-1}y^{\pm 1} \big)\times B_{1}(y^0)
	+\mathcal{O}(\varepsilon)\big)= P_{\pm1}(y^{\pm 1}\times B_{1}(y^0))	+\mathcal{O}(\varepsilon^2),
\end{equation*}
which is the presented third equation in (c).

\textsl{(d)} In contrast to \cite{Lubich2022}, where the initial values are given at $t=0$, we state them at $t=h/2$. From \eqref{exact-MFE}, we obtain
\begin{equation}\label{CPD-MFE3}
	\begin{aligned}
	&x(h/2)=y^0(h/2)+e^{ih/2\varepsilon}y^1(h/2)+e^{-ih/2\varepsilon}y^{-1}(h/2)+\mathcal{O}(\varepsilon^3),\\
	&\dot{x}(h/2)=\dot{y}^0(h/2)+e^{ih/2\varepsilon}\dot{y}^1(h/2)+e^{-ih/2\varepsilon}\dot{y}^{-1}(h/2)+i\varepsilon^{-1}\big(e^{ih/2\varepsilon}y^1(h/2)-e^{-ih/2\varepsilon}y^{-1}(h/2)\big)+\mathcal{O}(\varepsilon^2).
	\end{aligned}
\end{equation}
We express the vectors in the basis $(v_j)$, use $P_0(\dot{y}^0\times B_0)=0$, $P_{\pm 1}(\dot{y}^0\times B_0)=\pm i \dot{y}_{\pm 1}^{0}$, and the bounds of $y^k$ given in part (a), then the above second formula yields
\begin{equation*}
		e^{ih/2\varepsilon}y_{1}^1(h/2)+e^{-ih/2\varepsilon}y_{-1}^{-1}(h/2)=-\varepsilon\dot{x}(h/2)\times B_0+\mathcal{O}(\varepsilon^2).
\end{equation*}
 Thus, we have $y^{0}(h/2)=x(h/2)+\varepsilon \dot{x}(h/2) \times B_{0}+\mathcal{O}(\varepsilon^2)$. Multiplying the second formula of \eqref{CPD-MFE3} with $P_0$ and considering the differential equation for $y_{\pm1}^{\pm1}$ in part (c), the expression for $\dot{y}_0^0(h/2)$ is then obtained. Taking the projection $P_{1}$ on both sides of the second equation of \eqref{CPD-MFE3} gives
\begin{equation*}
	\begin{aligned}
		P_{1}\dot{x}(h/2)&=	\dot{y}_{1}^0(h/2)+e^{ih/2\varepsilon}\dot{y}_{1}^1(h/2)+e^{-ih/2\varepsilon}\dot{y}_{1}^{-1}(h/2)+i\varepsilon^{-1}\big(e^{ih/2\varepsilon}y_{1}^1(h/2)-e^{-ih/2\varepsilon}y_{1}^{-1}(h/2)\big)+\mathcal{O}(\varepsilon^2)\\
&=i\varepsilon^{-1}e^{ih/2\varepsilon}y_{1}^1(h/2)+\mathcal{O}(\varepsilon),
\end{aligned}
\end{equation*}
which we have used the bounds $\dot{y}_{1}^{0}=\mathcal{O}(\varepsilon)$ and $y_{j}^{k}=\mathcal{O}(\varepsilon^{\abs{k}+1})$ for $j=1$ and  $k=\pm1$.  This leads to the expression for $y_{1}^{1}(h/2)$ in part (d). Similarly, the case $j=-1$ is derived after multiplying the second equation of \eqref{CPD-MFE3} by $P_{-1}$.
\end{proof}

\subsubsection{Modulated Fourier expansion of the numerical solution} \label{MFE-method}
In this subsection, we shall consider the modulated Fourier expansion of the two-step formulation \eqref{Method}. 
Our analysis will be carried out for the regime $h^2 > C^{*}\varepsilon$, while the corresponding results for $ c^{*}\varepsilon^2 \leq h^2 \leq C^{*}\varepsilon$ can be derived in the same way.

\begin{mytheo}\label{thm: bounds} Under the  conditions of  Theorem \ref{error bound 2} with $h^2 > C^{*}\varepsilon$, suppose that  initial velocity $\dot{x}(0)$  is bounded as in \eqref{velocity} and the numerical solution $x_n$ remains in a compact set $K$ for $0 \leq nh \leq T$, where both $K$ and $T$ are independent of $\varepsilon$ and $h$. Then the numerical result $x_{n+1/2} :=(x_n+x_{n+1})/2$ admits the following modulated Fourier expansion
	\begin{equation*}\label{MFE}
		x_{n+1/2}=\sum_{\abs{k} \leq N}e^{ikt/  \varepsilon}\xi^{k}(t)+R_N(t),	\quad  \ t=(n+1/2)h,
	\end{equation*}
	with the following properties:
	
\textsl{(a)} The coefficient functions $\xi^k$ and their derivatives up to order $N$ are bounded as follows:
\begin{equation*}
\begin{aligned}
&\xi_{j}^{0} = \mathcal{O}(1)\ \text{for}\ j \in \{-1,0,1\}, \ \xi_0^k = \mathcal{O}(h\varepsilon^{|k|}),\  \text{for}\abs{k}\geq 1,
\\  &\xi_{\pm 1}^{\pm 1} = \mathcal{O}(\varepsilon), \ \xi_j^k = \mathcal{O}(\varepsilon^{|k|+1}) \ \text{for all other}\ (k,j) \ \text{with} \abs{k}\leq N.
\end{aligned}
\end{equation*}
These coefficients are unique up to $\mathcal{O}(\varepsilon^{N+1})$ and are chosen to satisfy $\xi_{-j}^{-k} = \overline{\xi_j^k}$. Moreover,  the time derivatives of $\xi_{\pm 1}^0$, as well as their higher-order derivatives, are bounded by $\dot{\xi}_{\pm 1}^0 = \mathcal{O}(\varepsilon)$.

\textsl{(b)} For arbitrary $M > 1$, the remainder $R_N(t)$   is bounded by
\begin{equation*}
P_0R_N(t)=\mathcal{O}(t^2 h^{M})+\mathcal{O}(t^2 \varepsilon^{N}),\ \ P_{\pm1}R_N(t)=\mathcal{O}(t^2\varepsilon h^{M-1})+\mathcal{O}(t^2 \varepsilon^{N}), \ \  0\leq t=(n+1/2)h \leq T.
\end{equation*}	

\textsl{(c)} The functions $\xi_0^0$, $\xi_{\pm 1}^0$, $\xi_{1}^{1}$, $\xi_{-1}^{-1}$ satisfy the following differential equations
	\begin{equation*}
		\begin{aligned}
			&\ddot{\xi}_{0}^{0}=P_{0}\big(\dot{\xi}^0 \times B_1(\xi^0)+F(\xi^0)\big)+\mathcal{O}(h^2),\quad \ 
            \dot{\xi}_{\pm 1}^0=\pm i  \varepsilon  P_{\pm 1}\big(\dot{\xi}^0 \times B_1(\xi^0)+F(\xi^0)\big)+\mathcal{O}(\varepsilon h),\\
			&\dot{\xi}_{\pm 1}^{\pm 1}={\varepsilon}{h}^{-1}\sin(h/ \varepsilon) P_{\pm 1}\big(\xi_{\pm 1}^{\pm 1} \times B_1(\xi^0)\big)+\mathcal{O}( \varepsilon^2).
		\end{aligned}
	\end{equation*}	
	All other modulation functions  $\xi_{j}^{k}$ are given by  algebraic expressions depending on
	$\xi^0$, $\dot{\xi}_{0}^0$, $\xi_{1}^{1}$, $\xi_{-1}^{-1}$.
	
\textsl{(d)} For the differential equations of item \textsl{(c)}, their initial values  are given by
	\begin{equation*}
\xi^{0}(h/2)=x_{1/2}+\mathcal{O}(h^2),\
	\dot{\xi}_{0}^0(h/2)=P_0v_{1/2}+\mathcal{O}( h^2),\
    \xi_{\pm 1}^{\pm 1}(h/2)=\mp i\varepsilon e^{\mp ih/2\varepsilon}\cos(h/2\varepsilon)P_{\pm 1}v_{1/2}+\mathcal{O}(\varepsilon h).
	\end{equation*}	
The constants symbolized by the $\mathcal{O}$-notation  depend on $K$, $T$, $N$, on the velocity bound $\hat{C}$ and  on the bounds of derivatives of $B_1$ and $F$, but they are 	independent of $\varepsilon$	and $n$ with $ 0 \leq nh \leq T$.
\end{mytheo}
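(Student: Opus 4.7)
The argument follows the blueprint of Theorem 5.1, adapted to the discrete filtered scheme \eqref{Method}. The plan is: (i) rewrite \eqref{Method} in operator form and substitute the modulated ansatz $x_{n+1/2}=\sum_{|k|\le N}e^{\mathrm{i}kt/\varepsilon}\xi^k(t)$ with $t=(n+1/2)h$; (ii) collect the phase factors $e^{\mathrm{i}kt/\varepsilon}$ and equate their coefficients to obtain a hierarchy of modulation equations; (iii) use the projections $P_j$ together with the non-resonance conditions \eqref{non-resonance conditions} to solve this hierarchy recursively, identifying the leading equations stated in \textsl{(c)} and the order bounds of \textsl{(a)}; (iv) fix the initial data at $t=h/2$ from $x_0$, $v_0$ and the starting value $x_1$ produced by \eqref{start_x}; (v) insert the truncated expansion back into \eqref{Method} and bound the resulting defect by a two-step discrete Gronwall argument to obtain \textsl{(b)}.

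The main technical observation in steps (i)--(ii) is that $e^{hD}$ acting on $e^{\mathrm{i}kt/\varepsilon}\xi^k(t)$ produces the phase multiplier $e^{\mathrm{i}kh/\varepsilon}$; after separating this phase, the discrete Laplacian on the left-hand side of \eqref{Method} becomes
\begin{equation*}
  \bigl[e^{\mathrm{i}kh/\varepsilon}e^{hD}-2+e^{-\mathrm{i}kh/\varepsilon}e^{-hD}\bigr]\xi^k(t),
\end{equation*}
whose Taylor expansion starts with the algebraic factor $2(\cos(kh/\varepsilon)-1)$ for $k\neq 0$ and with $h^2 D^2$ for $k=0$. Using the eigenbasis of $\tilde B_0$, the filter decouples as $\Psi = P_0 + \mathrm{tanc}(h/(2\varepsilon))\,(P_1+P_{-1})$, and the nonlinear right-hand side is Taylor-expanded around $\xi^0$ exactly as in the proof of Theorem~5.1.

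In step (iii) the leading modulation equations emerge after projection. On $P_0$ with $k=0$, the expansion $h^2\ddot\xi_0^0+\mathcal{O}(h^4)$ yields the stated ODE for $\ddot\xi_0^0$. On $P_{\pm 1}$ with $k=0$, the dominant contribution from the discrete first-derivative piece combined with the filter gives a first-order ODE for $\dot\xi_{\pm 1}^0$ of size $\varepsilon$. For $k=\pm 1$ on $P_0$, the factor $\cos(h/\varepsilon)-1$ is bounded away from zero by the sine non-resonance, which fixes $\xi_0^{\pm 1}$ algebraically at size $\mathcal{O}(h\varepsilon)$. For $k=\pm 1$ on $P_{\pm 1}$, the filter $\mathrm{tanc}(h/(2\varepsilon))$ is well-defined thanks to the cosine non-resonance and cancels the $\varepsilon^{-2}$-contribution, leaving a first-order ODE for $\dot\xi_{\pm 1}^{\pm 1}$; the coefficient $(\varepsilon/h)\sin(h/\varepsilon)$ stated in \textsl{(c)} comes from the residual $\sin(h/\varepsilon)$ factor in the discrete first-derivative combination after inverting the filtered algebraic coefficient. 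All higher-$|k|$ coefficients are then obtained by the same triangular recursion, inheriting the magnitudes $\mathcal{O}(\varepsilon^{|k|+1})$ from the initial velocity scaling, and uniqueness up to $\mathcal{O}(\varepsilon^{N+1})$ follows from this triangular structure.

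For step (iv), matching the ansatz against $x_{1/2}=(x_0+x_1)/2$ and the filtered velocity $v_{1/2}=\Phi (x_1-x_0)/h$ at $t=h/2$, and using \eqref{start_x}, yields $\xi^0(h/2)=x_{1/2}+\mathcal{O}(h^2)$ and $P_0\dot\xi^0(h/2)=P_0 v_{1/2}+\mathcal{O}(h^2)$; the cosine factor in $\xi_{\pm 1}^{\pm 1}(h/2)$ arises from the discrete derivative of $e^{\mathrm{i}kt/\varepsilon}\xi^k(t)$ between $t=0$ and $t=h$ combined with the $\Phi$-filter. For step (v), inserting the truncated ansatz into \eqref{Method} produces a defect of size $\mathcal{O}(h^M)$ on the $P_0$ component (from truncating the Taylor expansion of the discrete operators) and $\mathcal{O}(\varepsilon h^{M-1})$ on the $P_{\pm 1}$ components (the extra $\varepsilon$ being inherited from the oscillatory-projection modulation equations), which a standard two-step discrete Gronwall argument converts into the stated remainder bounds with the $t^2$ factor. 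The main obstacle will be the careful simultaneous bookkeeping of $\varepsilon$- and $h$-orders across all three projections, together with verifying that the cancellations induced by the filter $\Psi$ and the non-resonance conditions \eqref{non-resonance conditions} produce the precise leading coefficients in \textsl{(c)}, in particular the factor $(\varepsilon/h)\sin(h/\varepsilon)$ which has no analogue in the continuous expansion of Theorem~5.1.
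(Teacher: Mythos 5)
Your plan follows the paper's proof essentially step for step: recast \eqref{Method} in operator form, insert the ansatz for $x_{n+1/2}$, compare coefficients of $e^{\mathrm{i}kt/\varepsilon}$, use the projections $P_j$ and the non-resonance conditions \eqref{non-resonance conditions} to extract the leading differential and algebraic modulation equations, read off the initial data at $t=h/2$ from $x_{1/2}$ and $v_{1/2}$, and bound the defect. The one place where your bookkeeping is not yet correct is the symbol of the left-hand side. The scheme is a second difference on the integer grid while your ansatz lives on the half-integer grid; the paper bridges this via the factorization $e^{hD}-2+e^{-hD}=L_2(e^{-hD})\,L_1^2L_2^{-1}(e^{hD})$ with $L_1,L_2$ from \eqref{oper}, so that the equation satisfied by $x_{n+1/2}$ carries the operator $h^{-2}L_1^2L_2^{-2}(e^{hD})$, whose leading symbol on the $k$-th mode is $-\tfrac{4}{h^2}\tan^2\big(\tfrac{kh}{2\varepsilon}\big)$, not $\tfrac{2}{h^2}\big(\cos(kh/\varepsilon)-1\big)=-\tfrac{4}{h^2}\sin^2\big(\tfrac{kh}{2\varepsilon}\big)$ as you write; likewise the first-difference symbol on the right is $c_0^{k}=\tfrac{2i}{\eta}\tan\big(\tfrac{k\eta}{2}\big)$ rather than a sine. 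The delicate cancellations in item \textsl{(c)} — the elimination of the $\varepsilon^{-2}$-terms by $P_{\pm1}\Psi=\mathrm{tanc}(h/(2\varepsilon))P_{\pm1}$, and the resulting coefficient $\tfrac{\varepsilon}{h}\sin(h/\varepsilon)$ in the $\dot\xi_{\pm1}^{\pm1}$ equation — depend on this exact $\tan$-against-$\mathrm{tanc}\cdot\tan$ pairing, so with your symbols the identification of the leading coefficients would have to be redone and does not come out as stated. Once the operator translation is fixed the rest of your outline matches the paper; note also that for items \textsl{(a)} and \textsl{(b)} the paper does not carry out the Gronwall-type construction from scratch but defers to the analogous argument for the exact solution (Theorem 4.1 of Hairer--Lubich 2020), so your proposed two-step defect estimate would need to be supplied in full detail, including the different remainder sizes on $P_0$ and $P_{\pm1}$.
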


\begin{proof} \textsl{(a)} and \textsl{(b)}
The bounds for the modulation functions \textsl{(a)} and the remainder term \textsl{(b)} can be obtained through a similar analysis of the exact solution, provided that the step size satisfies $h^2> C^{*}\varepsilon$. Here, we emphasize the main differences in extending the modulated Fourier expansion to the two-step formulation \eqref{Method}.
	
 In order to establish the modulated Fourier expansion of the two-step formulation \eqref{Method}, we first consider the
 operators \eqref{oper} defined in Section \ref{sec4.2} and use the fact that $v \times B(x)=\big(A'(x)^{\intercal}-A'(x)\big)v$. This yields
	\begin{equation}\label{opera-method}
		\begin{aligned}
			h^{-2}L_1^2 L_2^{-2}(e^{hD})x_{n+\frac{1}{2}}
			=\Psi & \big(h^{-1}L_1 L_2^{-1}(e^{hD})x_{n+\frac{1}{2}} \times B\big(x_{n+\frac{1}{2}}\big)
			+h^{-1}A'\big(x_{n+\frac{1}{2}}\big)\big(L_1 L_2^{-1}(e^{hD})x_{n+\frac{1}{2}}\big)\\
			&\ \ - h^{-1} L_1 L_2^{-1}(e^{hD}) A\big(x_{n+\frac{1}{2}}\big)+F\big(x_{n+\frac{1}{2}}\big)           \big).
		\end{aligned}
	\end{equation}	
	For the solution $x_{n+1/2}$,  the  modulated Fourier expansion is given by
	\begin{equation}\label{N-MFE}
		x_{n+1/2} \approx \sum_{\abs{k} \leq N}e^{ikt/  \varepsilon}\xi^{k}(t)
		=\sum_{\abs{k} \leq N}z^{k}(t),
	\end{equation}
	where $z^{k}(t)=e^{ikt/  \varepsilon}\xi^{k}(t)$,  $t=t_{n+1/2}:=(t_{n}+t_{n+1})/2$ and the functions $\xi^{k}$ depend on the step size $h$ and $\eta=h/\varepsilon$.

Using the Taylor expansion \eqref{operators-expasion} of the operators, it follows that
	\begin{equation*}
		\begin{aligned}
			h^{-2}L_{1}^{2}L_{2}^{-2}(e^{hD})z^k(t)=e^{ikt/\varepsilon}\sum_{l \geq 0}\varepsilon^{l-2} d_{l}^{k}\frac{\mathrm{d}^l}{{\mathrm{d}t}^l}\xi^{k},\ \quad
			h^{-1}	L_{1}L_{2}^{-1}(e^{hD})z^k(t)=e^{ikt/\varepsilon}\sum_{l \geq 0}\varepsilon^{l-1} c_{l}^{k}\frac{\mathrm{d}^l}{{\mathrm{d}t}^l}\xi^{k},
		\end{aligned}
	\end{equation*}
	where $c_{2j}^{0}=d_0^0=d_{2j+1}^{0}=0$, $c_{2j+1}^{0}=\alpha_{2j+1}\eta^{2j}$  and $d_{2j}^{0}=\beta_{2j}\eta^{2j-2}$. And $\alpha_j$ and $\beta_j$ are defined through the expansions
	\begin{equation*}
		2\tanh(t/2)=\sum_{j \geq 0}^{\infty}\alpha_{j}t^j, \quad 4\tanh^2(t/2)=\sum_{j \geq 0}^{\infty}\beta_{j}t^j.
	\end{equation*}
For $k\neq0$, we have
	\begin{equation*}
		\begin{aligned}
			&c_0^k=\frac{2i}{\eta}\tan\Big(\frac{1}{2}k\eta\Big), \quad \quad
			c_1^k=\sec^2\Big(\frac{1}{2} k \eta \Big),\qquad\qquad\qquad  	c_2^k=\frac{-i\eta}{2}\tan\Big(\frac{1}{2}k\eta\Big)\sec^2\Big(\frac{1}{2} k \eta \Big),\\
			&d_0^k=\frac{-4}{\eta^2}\tan^2\Big(\frac{1}{2}k\eta\Big),\quad
			d_1^k=\frac{4i}{\eta}\tan\Big(\frac{1}{2}k\eta\Big)\sec^2\Big(\frac{1}{2} k \eta \Big),\quad  d_2^k=\big(2-\cos(k\eta)\big)\sec^4\Big(\frac{1}{2} k \eta \Big).
		\end{aligned}
	\end{equation*}
Inserting expansion \eqref{N-MFE}	into the formulation  \eqref{opera-method} and comparing the coefficients of $e^{ikt/  \varepsilon}$ yields
	\begin{equation}\label{fund}
		\sum_{l \geq 0}\varepsilon^{l-2}d_l^{k}\frac{\mathrm{d}^l}{{\mathrm{d}t}^l}  \xi^{k}=\Upsilon^k,
	\end{equation}	
	with
	\begin{equation*} 
		\begin{aligned}
			\Upsilon^k
			=&	\Psi\Big(\sum_{k_1+k_2=k}\Big(\sum_{l \geq   0}\varepsilon^{l-1} c_{l}^{k_1}\frac{\mathrm{d}^l}{{\mathrm{d}t}^l}\xi^{k_1}\Big)
			\times \sum_{ \underset{s(\alpha)=k_2 }{0 \leq m \leq N  }}\frac{1}{m!}B^{(m)}(\xi^{0}){\pmb{\xi}}^{\alpha}\\
			&+\sum_{k_1+k_2=k}\Big(\sum_{ \underset{s(\alpha)=k_1 }{0 \leq m \leq N  }}\frac{1}{m!}A^{(m+1)}(\xi^{0})\pmb{\xi}^{\alpha}\Big)
			\Big(\sum_{ l \geq 0}\varepsilon^{l-1} c_{l}^{k_2}\frac{\mathrm{d}^l}{{\mathrm{d}t}^l}\xi^{k_2}\Big) \\
			&-\sum_{ l \geq 0}\varepsilon^{l-1} c_{l}^{k}\frac{\mathrm{d}^l}{{\mathrm{d}t}^l}
			\Big(\sum_{ \underset{s(\alpha)=k }{0 \leq m \leq N  }}\frac{1}{m!}A^{(m)}(\xi^{0})\pmb{\xi}^{\alpha}\Big)
			+\sum_{ \underset{s(\alpha)=k }{0 \leq m \leq N  }}\frac{1}{m!}F^{(m)}(\xi^{0})\pmb{\xi}^{\alpha}\Big).
		\end{aligned}
	\end{equation*}	
Here, we do not provide the details of the bounds for the coefficient functions $\xi^k$  and the remainder term $R_{N}(t)$, since they can be derived using arguments similar to those used for the exact solution.

\textsl{(c)} In order to derive the second-order error bounds, we shall show the differential equations for the dominant coefficient functions $\xi_0^0$, $\xi_{\pm 1}^0$ and $\xi_{\pm1}^{\pm 1}$ as follows.   Multiplying \eqref{fund} with $P_{j}$ gives
	\begin{equation}\label{Pfund}
		\varepsilon^{-2}d_{0}^{k}\xi_{j}^{k}
		+\varepsilon^{-1}d_{1}^{k}\dot{\xi}_{j}^{k}
		+P_{j}\sum_{l \geq 2}\varepsilon^{l-2} d_{l}^{k}\frac{\mathrm{d}^l}{{\mathrm{d}t}^l}\xi^{k}
		=P_{j}\Upsilon^k.
	\end{equation}	
	
	\textsl{i)} 	For $k=0$ and $j=0$, we obtain
		\begin{equation*}\label{}
   \varepsilon^{-2}d_{0}^{0}\xi_{0}^{0}	+\varepsilon^{-1}d_{1}^{0}\dot{\xi}_{0}^{0}
   	+d_{2}^{0}\ddot{\xi}_{0}^{0}	+P_{0}\sum_{l \geq 3}\varepsilon^{l-2} d_{l}^{0}\frac{\mathrm{d}^l}{{\mathrm{d}t}^l}\xi^{0}=P_0\Upsilon^0.
		\end{equation*}	
By combining the definitions of coefficients $d_{l}^{k}$,  $c_{l}^{k}$ and $\Upsilon^k$ with the bounds for the coefficients $\xi^k$, it follows that
\begin{equation*}\label{}
	\begin{aligned}
		\ddot{\xi}_{0}^{0}+\mathcal{O}(h^2)
		=P_0\Psi\bigg(&\dot{\xi}^{0} \times B(\xi^{0})+F(\xi^{0})
		+\frac{4i}{h}\tan\Big(\frac{h}{2\varepsilon}\Big)\xi^{1}\times B_{1}'(\xi^{0})\xi^{-1}\\
		&-\frac{4i}{h}\tan\Big(\frac{h}{2\varepsilon}\Big)\big(A''({\xi}^{0})\xi^{1}\big)\xi^{-1}
		+\mathcal{O}(h^2)+\mathcal{O}(\varepsilon^2)\bigg).
	\end{aligned}
\end{equation*}	
Using the identity $P_0\Psi = P_0$, which follows from the definition of $\Psi$ and the orthogonality of the projection, together with the fact that $P_0(\alpha \times B_0)=0$ for all $\alpha\in\mathbb{R}^3$, the above relation simplifies to
\begin{equation*}\label{}
\ddot{\xi}_{0}^{0}
=P_0\Big(\dot{\xi}^{0} \times B_1(\xi^{0})+F(\xi^{0})
+\mathcal{O}(h^2)+\mathcal{O}(\varepsilon^2/h)\Big).
\end{equation*}	
Under the condition $h^2 > C^{*}\varepsilon$, the term $\mathcal{O}(\varepsilon^2/h)$ is dominated by $\mathcal{O}(h^2)$, which yields the first equation in \textsl{(c)}.

\textsl{ii)}  Inserting $k=0$ and $j=1$ into \eqref{Pfund}, using the expressions of  $d_{l}^{k}$, $\Upsilon^k$, $\Psi$ and the bounds of coefficients $\xi^k$,  this yields
\begin{equation*}\label{}
	\begin{aligned}
		\ddot{\xi}_{0}^{0}+\mathcal{O}(\varepsilon h^2)
		=P_1\Psi\bigg(&\dot{\xi}^{0} \times B(\xi^{0})+F(\xi^{0})
		+\frac{4i}{h}\tan\Big(\frac{h}{2\varepsilon}\Big)\xi^{1}\times B_{1}'(\xi^{0})\xi^{-1}\\
		&-\frac{4i}{h}\tan\Big(\frac{h}{2\varepsilon}\Big)\big(A''({\xi}^{0})\xi^{1}\big)\xi^{-1}
		+\mathcal{O}(h^2)+\mathcal{O}(\varepsilon^2)\bigg).
	\end{aligned}
\end{equation*}	
From the definition of $\Psi$ and the orthogonality of the projection, it follows that $P_1\Psi = \mathrm{tanc}(h/2 \varepsilon)P_1$. Moreover, using the identity $P_{\pm 1}(\alpha \times B_0)=\pm i P_{\pm 1}\alpha$ for all $\alpha\in\mathbb{R}^3$, the above relation can be rewritten as 
\begin{equation*}\label{}
\ddot{\xi}_{0}^{0}+\mathcal{O}(\varepsilon h^2)
=\frac{2i}{h}\tan\Big(\frac{h}{2\varepsilon}\Big)\dot{\xi}_{1}^{0}+\frac{2\varepsilon}{h}\tan\Big(\frac{h}{2\varepsilon}\Big)P_1\Big(\dot{\xi}^{0} \times B_1(\xi^{0})+F(\xi^{0})+\mathcal{O}(h^2)+\mathcal{O}(\varepsilon^2/h)\Big).
\end{equation*}	
We now derive the equation satisfied by $\dot{\xi}_{1}^{0}$, which appears in the dominant term with a factor $h^{-1}$. By the non-resonance condition \eqref{non-resonance conditions}, we have $\abs{\tan(h/2\varepsilon)} \geq c>0$. Furthermore, recalling that the bound $	\ddot{\xi}_{1}^{0}=\mathcal{O}(\varepsilon)$, we conclude that
	\begin{equation*}
		\dot{\xi}_{1}^{0}= 	i \varepsilon P_1\big(\dot{\xi}^{0} \times B_1(\xi^{0})+F(\xi^{0})\big)+\mathcal{O}(\varepsilon h),
	\end{equation*}
	which is the differential equation for $\xi_{1}^{0}$ stated in (c). The case $j=-1$ is obtained by taking complex conjugates.
	
\textsl{iii)} 	For $k=1$ and $j=1$, considering the expressions of the  leading coefficients $c_{l}^{k}$, $d_{l}^{k}$ and the bounds of $\xi^k$,  we obtain
	\begin{equation*}
	\begin{aligned}
&-\frac{4}{h^2}\tan^{2}\Big(\frac{h}{2 \varepsilon}\Big)\xi_{1}^{1}
+\frac{4 i}{h}\tan\Big(\frac{h}{2 \varepsilon}\Big)\sec^{2}\Big(\frac{h}{2 \varepsilon}\Big)\dot{\xi}_{1}^{1}
+\mathcal{O}(\varepsilon)\\ 
&=
P_1\Psi\bigg(\Big(\frac{2i}{h}\tan\Big(\frac{h}{2\varepsilon}\Big)\xi^{1}+\sec^2\Big(\frac{h}{2\varepsilon}\Big)\dot{\xi}_{1}
+\mathcal{O}(\varepsilon h)\Big)\times \Big(B(\xi^{0})+\mathcal{O}(\varepsilon^2)\Big) \\
&\qquad\qquad +\frac{i}{h}\tan\Big(\frac{h}{2 \varepsilon}\Big)A'''(\xi^0)(\xi^1,\xi^{-1})\xi^1+\mathcal{O}(\varepsilon)\bigg)
.
\end{aligned}
\end{equation*}
With the help of $P_{\pm 1}\Psi=\mathrm{tanc}(h/2\varepsilon)P_{\pm 1}$ and $P_{\pm 1}(\alpha \times B_0)=\pm i P_{\pm 1}\alpha$ for all $\alpha\in\mathbb{R}^3$, the above formula gives 
	\begin{equation*}
		\begin{aligned}
			&-\frac{4}{h^2}\tan^{2}\Big(\frac{h}{2 \varepsilon}\Big)\xi_{1}^{1}
			+\frac{4 i}{h}\tan\Big(\frac{h}{2 \varepsilon}\Big)\sec^{2}\Big(\frac{h}{2 \varepsilon}\Big)\dot{\xi}_{1}^{1}
			+\mathcal{O}(\varepsilon)\\ 
			&=-\frac{4}{h^2}\tan^{2}\Big(\frac{h}{2 \varepsilon}\Big)\xi_{1}^{1}
			+\frac{2 i}{h}\tan\Big(\frac{h}{2 \varepsilon}\Big)\sec^{2}\Big(\frac{h}{2 \varepsilon}\Big)\dot{\xi}_{1}^{1}
			+\frac{4 i \varepsilon }{h^2}\tan^{2}\Big(\frac{h}{2 \varepsilon}\Big)P_1\big(\xi_{1}^{1} \times B_1(\xi^{0}) \big)+\mathcal{O}(\varepsilon)+\mathcal{O}(\varepsilon^2/h).
		\end{aligned}
	\end{equation*}
It is worth noting that the elimination observed here results from defining $\Psi$
as in \eqref{MVF-phi}, which ensures that the first term on the right-hand side becomes identical to the first term on the left.  Consequently, the terms involving $\dot{\xi}_{1}^{1}$ become dominant. Then, under the non-resonance condition \eqref{non-resonance conditions}, the following result for $\xi_{1}^{1}$ is determined
	\begin{equation*}
		\dot{\xi}_{1}^{1}=\varepsilon h^{-1} \sin (h/\varepsilon)P_1\big(\xi_{1}^{1} \times B_1(\xi^{0}) \big)+\mathcal{O}(\varepsilon^2).
	\end{equation*}
Under the condition $h^2 >C^{*}\varepsilon$, this modulation equation coincides
with the corresponding equation satisfied by the exact coefficient $y_1^1$
in the modulated Fourier expansion of the exact solution up to $\mathcal{O}(h^2)$, which plays a key role in deriving the second-order error bound in the position.  A detailed error analysis based on the comparison of the modulated Fourier expansions
of the exact and numerical solutions can be found in Section~\ref{error-proof}.
Moreover, the case $j=-1$ is then given by taking the complex conjugates.
	
\textsl{(d)}	The initial value is given by
	\begin{equation*}
		\sum_{\abs{k} \leq N}e^{ikh/2\varepsilon}\xi^{k}(h/2) =x_{1/2},
	\end{equation*}
	which is  a result of \eqref{N-MFE}. In view of the bounds of $\xi^{\pm 1}$, the stated expression for $\xi^{0}(h/2)$ can be obtained immediately from this formula.
	
	By the definition of the operator  $h^{-1}L_1 L_2^{-1}(e^{hD})$, the approximate velocity in \eqref{AV1} can be transformed into
	\begin{equation}\label{initialv}
		v_{n+1/2}=\Phi		h^{-1}L_{1}L_{2}^{-1}(e^{hD})x_{n+1/2}.
			\end{equation}		
 Inserting \eqref{N-MFE} into the above formula and  multiplying  it by  $P_{0}$, combining the  coefficients $c_{l}^{k}$	with the bounds of $\xi^k$, we arrive at
			\begin{equation*}
				\begin{aligned}
					P_0	v_{1/2}
			&=P_0\Phi\Big(\sum_{\abs{k}  \leq N}	h^{-1}L_{1}L_{2}^{-1}(e^{hD})e^{ikh/  2\varepsilon}\xi^{k}(h/2)\Big)
			=P_0\Phi\Big(\sum_{\abs{k}  \leq N}e^{ikh/2\varepsilon}\sum_{l \geq 0}\varepsilon^{l-1} c_{l}^{k}\frac{\mathrm{d}^l}{{\mathrm{d}t}^l}\xi^{k}(h/2)\Big)\\
			&=P_0\Phi\big(\dot{\xi}^{0}(h/2)+2ih^{-1}\tan(h/2\varepsilon)
			\big(e^{ih/2\varepsilon}\xi_{1}^{1}(h/2)-e^{-ih/2\varepsilon}\xi_{-1}^{-1}(h/2)\big)
			+\mathcal{O}(h^2)\big)\\
			&=\dot{\xi}_{0}^{0}(h/2)+\mathcal{O}(h^2).
			\end{aligned}
		\end{equation*}		
Similarly,	a  multiplication of   \eqref{initialv} with $P_{\pm 1}$ gives
	\begin{equation*}
	\begin{aligned}
P_{\pm 1}v_{1/2}
&=P_{\pm1}\Phi\big(\dot{\xi}^{0}(h/2)+2ih^{-1}\tan(h/2\varepsilon)
\big(e^{ih/2\varepsilon}\xi_{1}^{1}(h/2)-e^{-ih/2\varepsilon}\xi_{-1}^{-1}(h/2)\big)
+\mathcal{O}(h^2)\big)\\
&=\pm i\varepsilon^{-1}e^{\pm ih/2\varepsilon}\sec(h/2\varepsilon) \xi_{\pm 1}^{\pm 1}(h/2)+\mathcal{O}(h).
\end{aligned}
\end{equation*}		
Here, we use the fact that $P_{\pm 1} \Phi \alpha=\mathrm{sinc} (h/2\varepsilon)^{-1}P_{\pm 1}\alpha$, which holds for any vector $\alpha \in \mathbb{C}^3$, together with the bound $\dot{\xi}_{\pm 1}^0 = \mathcal{O}(\varepsilon)$. The above analysis leads to the second and third equations of the initial values in \textsl{(d)}. The proof is complete.
\end{proof}

\subsubsection{Error bounds (Proof of Theorem \ref{error bound 2})}\label{error-proof}
We first consider the regime $h^2 > C^{*}\varepsilon$. From the  modulated Fourier expansion of the exact solution in Theorem \ref{Theoexact}, we have $x(t)=y^0(t)+\mathcal{O}(\varepsilon)$,
and Theorem \ref{thm: bounds} gives the numerical solution of
the two-step formulation \eqref{Method} for a stepsize $h^2 > C^{*}\varepsilon$ as $x_{n+1/2}=\xi^0(t_{n+1/2})+\mathcal{O}(h^2).$
Moreover, Theorems \ref{Theoexact} and \ref{thm: bounds} show  that the differential equations for $y^0(t)$ and $\xi^0(t_{n+1/2})$ are the same up to $\mathcal{O}(h^2)$. At the same time, their initial values  are the same up to $\mathcal{O}(h^2)$, which follows from comparing the modulated Fourier expansion of the exact solution $x(t_1)$ with that of the numerical solution $x_1$. The bound of the velocity $\dot{x}(t_1)$ and $v_1$ can be derived in the same way. We do not provide the details here, since it uses the same kinds of arguments as in Theorem \ref{thm: bounds}.
Thus,  $y^0(t)$ and $\xi^0(t)$ differ by $\mathcal{O}(h^2)$ on the time interval $0\leq t \leq T$.
This yields $\abs{x_{n+1/2}-x(t_{n+1/2})} \leq  Ch^2.$
Combining the initial condition $x(t_0) = x_0$ with the result above yields $\abs{x_{n+1}-x(t_{n+1})} \leq  Ch^2.$
These results provide the error bounds for the position, as stated in Theorem~\ref{error bound 2}.

We now derive the error bound for the velocity. By Theorem \ref{Theoexact},  the modulated Fourier expansion of the velocity is shown as
\begin{equation*}\label{vt}
	v(t)=\dot{x}(t)=\dot{y}^{0}(t)+i\varepsilon^{-1}\big(y_{1}^{1}(t)e^{it/\varepsilon}-y_{-1}^{-1}(t)e^{-it/\varepsilon}\big)+\mathcal{O}(\varepsilon).
\end{equation*}
From the modulated Fourier expansion of the velocity  \eqref{initialv}, it follows that
\begin{equation*}
		v_{n+\frac{1}{2}}
		=\Phi\sum_{\abs{k}  \leq N}e^{ikt/\varepsilon}\sum_{l \geq 0}\varepsilon^{l-1} c_{l}^{k}\frac{\mathrm{d}^l}{{\mathrm{d}t}^l}\xi^{k}(t)
        =\Phi\Big(\dot{\xi}^{0}+\frac{2i}{h}\tan\Big(\frac{h}{2\varepsilon}\Big)	\big(\xi_{1}^{1}e^{it/\varepsilon}-\xi_{-1}^{-1}e^{-it/\varepsilon}\big)
		+\mathcal{O}(h^2)\Big).
\end{equation*}
Splitting $\Phi \cdot \dot{\xi}^{0}$ into $\dot{\xi}^{0}+(\Phi -I)\dot{\xi}^{0}$ and inserting the definition of $\Phi$ in \eqref{MVF-phi} into the above formula, we have  
\begin{equation*}
		v_{n+\frac{1}{2}}
		=\dot{\xi}^{0}+i\varepsilon^{-1}\sec(h/2\varepsilon)\big(\xi_{1}^{1}e^{it/\varepsilon}-\xi_{-1}^{-1}e^{-it/\varepsilon}\big)+(\Phi -I)\dot{\xi}^{0}
		+\mathcal{O}(h^2).
\end{equation*}
After multiplication  with $P_j$ for $j=-1,0,1$, it leads to
\begin{equation*}
	\begin{aligned}
		P_0 v_{n+\frac{1}{2}}=\dot{\xi}^{0}_{0}+\mathcal{O}(h^2),\quad \ P_{\pm1} v_{n+\frac{1}{2}}=\dot{\xi}^{0}_{\pm1}\pm i\varepsilon^{-1}\sec(h/2\varepsilon)e^{\pm it/\varepsilon} \xi_{\pm 1}^{\pm 1}+\mathcal{O}(h).
	\end{aligned}
\end{equation*}
Comparing the numerical and exact expansions yields the stated velocity error bounds. 

We next consider the regime $c^{*}\varepsilon^2 \le h^2 \le C^{*}\varepsilon$. 
In this case, the errors in the position and the parallel velocity satisfy an $\mathcal{O}(\varepsilon)$ bound. 
This estimate follows from a modulated Fourier expansion argument analogous to that employed for $h^2 > C^{*}\varepsilon$.
This completes the proof.

\subsection{Conservation properties (Proof of Theorems \ref{conservation-H2}--\ref{conservation-M2})} \label{sec5-Conservation}
Based on the idea of modulated Fourier expansions presented in Chap.~\uppercase\expandafter{\romannumeral13} of \cite{hairer2006}, we demonstrate that the modulated Fourier expansion of the numerical solution admits two almost-invariants. 
These almost-invariants are then used to establish long-time near-conservation of the energy and the magnetic moment. We first present the proof of Theorem~\ref{conservation-H2} under the condition $h^2 > C^{*}\varepsilon$. To simplify the derivation in this regime, we introduce the assumption $h^{m} \leq \varepsilon$ for some fixed $ m > 2 $ and choose $ M \geq mN $. This simplifies the treatment of remainder terms and enables the analysis on the time scale $\varepsilon^{-N}$. The same assumption will also be used in the proof of long-time near-conservation of magnetic moment. In the absence of this assumption, the result still holds for $\textmd{min}(h^{-M},\varepsilon^{-N})$.
For the case $h^2 \le C^{*}\varepsilon$, the argument follows the same overall strategy, with an additional discussion for sufficiently small step sizes $h^2 < c^{*}\varepsilon^2$, where backward error analysis is employed to establish the corresponding energy estimate.

\subsubsection{Long-time  near-conservation of energy (Proof of Theorem \ref{conservation-H2})} \label{sec5.2}
The proof proceeds by first establishing  the existence
of an almost-invariant for the modulated functions of the filtered two-step variational integrator that is close to the total energy in the following lemma. Based on this result, the long-time near-conservation of energy stated in Theorem~\ref{conservation-H2} is then derived.
\begin{lem}\label{lem1}
	Under the conditions of Theorem \ref{thm: bounds}, there exists an almost-invariant  $\mathcal{H}[\bold{z}](t)$ with  $\bold{z}=(z^{k})_{k \in \mathbb{Z}}$, such that for $0\leq t \leq T$
	\begin{equation*}
\mathcal{H}[\bold{z}](t)=\mathcal{H}[\bold{z}](0)
+\mathcal{O}(t\varepsilon^N),\ \
\mathcal{H}[\bold{z}](t_{n+\frac{1}{2}})=H(x_{n+\frac{1}{2}}, v_{n+\frac{1}{2}}) +\mathcal{O}(h) \ \ \textmd{for} \ \ 0\leq(n+1/2)h \leq T.
	\end{equation*}		
The constants symbolised by $\mathcal{O}$-notation are independent of $n$, $h$ and $\varepsilon$, but depend on  $N$.
\end{lem}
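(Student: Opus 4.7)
The proof follows the modulated Fourier expansion strategy of Chap.~\uppercase\expandafter{\romannumeral13} in \cite{hairer2006}. The central idea is to realise the truncated modulation system \eqref{fund} as the Euler--Lagrange equations of an \emph{extended Lagrangian} $\mathcal{L}[\bold{z}]$, obtained by inserting the ansatz $x(t)\approx\sum_{\abs{k}\le N}\mathrm{e}^{\mathrm{i}kt/\varepsilon}\xi^k(t)$ into the discrete Lagrangian underlying \eqref{Method} and expanding. Because Step~1 of Section~\ref{sec2} establishes that the base integrator is variational, and because the filter matrix $\Psi$ in \eqref{MVF-phi} is a polynomial in $\tilde B_0^2$ and hence commutes with the rotations $\bold{z}\mapsto(\mathrm{e}^{\mathrm{i}k\theta}z^k)_{k}$, the extended Lagrangian $\mathcal{L}[\bold{z}]$ inherits a continuous $U(1)$-invariance of this form.

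Applying Noether's theorem to this invariance yields a conserved quantity; carrying out the computation with the truncated Lagrangian produces a function $\mathcal{H}[\bold{z}](t)$ whose time derivative equals the defect in the Euler--Lagrange equations. Since the coefficient functions $\xi^k$ were constructed in Theorem~\ref{thm: bounds} so that the residual of \eqref{fund} is of size $\mathcal{O}(\varepsilon^N)$, this gives $\frac{\mathrm{d}}{\mathrm{d}t}\mathcal{H}[\bold{z}] = \mathcal{O}(\varepsilon^N)$, and integration on $[0,T]$ yields the first claim $\mathcal{H}[\bold{z}](t) = \mathcal{H}[\bold{z}](0) + \mathcal{O}(t\varepsilon^N)$.

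For the second claim, I would identify the leading terms of $\mathcal{H}[\bold{z}]$. By Theorem~\ref{thm: bounds}(a), only the dominant coefficients $\dot\xi^0_0$, $\xi^{\pm1}_{\pm1}$ and $U(\xi^0)$ contribute at leading order, giving
\begin{equation*}
\mathcal{H}[\bold{z}](t) = \tfrac12\abs{\dot\xi^0_0(t)}^2 + \varepsilon^{-2}\sec^2\bigl(h/(2\varepsilon)\bigr)\bigl(\abs{\xi^1_1(t)}^2 + \abs{\xi^{-1}_{-1}(t)}^2\bigr) + U(\xi^0(t)) + \mathcal{O}(h),
\end{equation*}
where the $\sec^2$-factor is inherited from the filter $\Phi$ in \eqref{MVF-phi}. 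From the modulated Fourier expansion of the velocity derived in Section~\ref{error-proof},
\begin{equation*}
v_{n+1/2} = \dot\xi^0_0(t_{n+1/2}) + \mathrm{i}\varepsilon^{-1}\sec\bigl(h/(2\varepsilon)\bigr)\bigl(\mathrm{e}^{\mathrm{i}t_{n+1/2}/\varepsilon}\xi^1_1 - \mathrm{e}^{-\mathrm{i}t_{n+1/2}/\varepsilon}\xi^{-1}_{-1}\bigr) + \mathcal{O}(h),
\end{equation*}
so that $\tfrac12\abs{v_{n+1/2}}^2$ matches the quadratic part of $\mathcal{H}[\bold{z}](t_{n+1/2})$ up to $\mathcal{O}(h)$; combined with $U(x_{n+1/2}) = U(\xi^0(t_{n+1/2})) + \mathcal{O}(h)$ (since $x_{n+1/2} = \xi^0 + \mathcal{O}(\varepsilon)$ and $U$ is smooth), this yields the stated $\mathcal{O}(h)$ agreement.

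\textbf{Main obstacle.} The principal difficulty is the construction of $\mathcal{L}[\bold{z}]$ and the Noether computation in the presence of the shift operators $L_1,L_2$ of \eqref{oper} and the filter $\Psi$: one must employ a discrete summation-by-parts (equivalently, the symmetry of the two-step scheme from Step~1 of Section~\ref{sec2}) to convert the Noether current into a closed-form expression in $\xi^k$ and $\dot\xi^k$, and verify that the non-resonance conditions \eqref{non-resonance conditions} prevent the denominators $\sin(kh/(2\varepsilon)),\cos(kh/(2\varepsilon))$ arising through $\Psi$ and $\Phi$ from polluting the algebraic elimination of the non-dominant coefficients underlying \eqref{fund}; only under these conditions do the remainders in the modulation equations genuinely collapse to $\mathcal{O}(\varepsilon^N)$.
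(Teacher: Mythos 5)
Your overall architecture (extended potentials, an almost-invariant from a symmetry of the modulation system, then a leading-order identification matched against the modulated Fourier expansion of $v_{n+1/2}$) is the right one, and the second half of your plan is essentially the paper's argument. But the symmetry you invoke for the \emph{first} statement is the wrong one. The $U(1)$ phase rotation $S(\lambda)\bold{z}=(\mathrm{e}^{\mathrm{i}k\lambda}z^k)_k$, under which $\mathcal{U}(S(\lambda)\bold{z})=\mathcal{U}(\bold{z})$ and $\mathcal{A}(S(\lambda)\bold{z})=S(\lambda)\mathcal{A}(\bold{z})$, is the symmetry that produces the \emph{action-type} almost-invariant $\mathcal{I}[\bold{z}]$ associated with the magnetic moment — that is exactly the route taken in the proof of the subsequent lemma for Theorem~\ref{conservation-M2}, where the modulation equations are tested with $-\mathrm{i}k(z^k)^{\ast}/\varepsilon$. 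The energy almost-invariant $\mathcal{H}[\bold{z}]$ instead comes from the autonomy (time-translation invariance) of the extended system: one multiplies the modulation equations \eqref{modulation functions} by $(\dot z^{k})^{\ast}$ and sums over $k$, and the nontrivial step is to show that each of the three resulting expressions — the kinetic term $\sum_k(\dot z^k)^{\ast}\Psi^{-1}h^{-2}L_1^2L_2^{-2}(\mathrm{e}^{hD})z^k$, the skew pairing of $\mathcal{A}_k$ with $h^{-1}L_1L_2^{-1}(\mathrm{e}^{hD})z^k$, and $\frac{\mathrm{d}}{\mathrm{d}t}\mathcal{U}(\bold{z})$ — is a total time derivative up to $\mathcal{O}(\varepsilon^N)$ (using, e.g., that $\langle f(hD)u,v\rangle-\langle u,f(-hD)v\rangle$ is a total derivative for $f$ analytic, and the symmetry of the operators). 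As written, Noether applied to your $U(1)$ action would hand you $\mathcal{I}$, not $\mathcal{H}$, so the first claim of the lemma is not established by your plan.

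Two further points on the second claim. First, your two displayed formulas are mutually inconsistent by a factor of $2$: since $\xi^{-1}_{-1}=\overline{\xi^1_1}$ and the $P_{\pm1}$ eigenspaces are orthogonal, $\tfrac12\abs{v_{n+1/2}}^2$ contributes $\tfrac12\varepsilon^{-2}\sec^2(h/2\varepsilon)\big(\abs{\xi^1_1}^2+\abs{\xi^{-1}_{-1}}^2\big)=\varepsilon^{-2}\sec^2(h/2\varepsilon)\abs{\xi^1_1}^2$, whereas your $\mathcal{H}$ carries $\varepsilon^{-2}\sec^2(h/2\varepsilon)\big(\abs{\xi^1_1}^2+\abs{\xi^{-1}_{-1}}^2\big)$, i.e.\ twice that; the correct leading form is $\tfrac12\abs{\dot\xi^0}^2+\varepsilon^{-2}\sec^2(h/2\varepsilon)\abs{\xi^1_1}^2+U(\xi^0)+\mathcal{O}(h)$. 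Second, the $\sec^2$ factor in $\mathcal{H}$ does not come "from the filter $\Phi$": it emerges from the explicit evaluation of the kinetic and vector-potential sums via the coefficients $c_l^{\pm1},d_l^{\pm1}$ (combinations of $\tan$ and $\sec$ of $h/2\varepsilon$) together with $\Psi^{-1}$; the role of $\Phi$ is to make the velocity expansion of $v_{n+1/2}$ produce the \emph{matching} $\sec$ factor so that the two sides agree to $\mathcal{O}(h)$. These are repairable, but the identification of the correct symmetry is the essential missing idea.
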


\begin{proof}
 { $\bullet$  \textbf{Proof of the first statement.}}
With the following extended potentials
	\begin{equation*}
	\begin{aligned}
		\mathcal{U}(\bold{z})= \sum_{ \underset{s(\alpha)=0 }{0 \leq m \leq N  }}
		\frac{1}{m!}U^{(m)}(z^0)\bold{z}^{\alpha},\ \quad
		\mathcal{A}(\bold{z})=(\mathcal{A}_k(\bold{z}))_{k \in \mathbb{Z} }
		=\Big(\sum_{ \underset{s(\alpha)=k }{0 \leq m \leq N+1  }}
		\frac{1}{m!}A^{(m)}(z^0)\bold{z}^{\alpha}\Big)_{k \in \mathbb{Z} },
	\end{aligned}
\end{equation*}
which are  given by \cite{Hairer2018} and satisfy the
following  invariance properties
\begin{equation*}\label{}
	\mathcal{U}(S(\lambda)\bold{z})=\mathcal{U}(\bold{z}),\quad	\mathcal{A}(S(\lambda)\bold{z})=S(\lambda)\mathcal{A}(\bold{z})
\end{equation*}	
 for all $\lambda \in \mathbb{R}$ and  $S(\lambda)\bold{z}:=(e^{ik\lambda}z^k)_{k \in \mathbb{Z} }$.
Using the functions $\xi^k$ constructed in Theorem~\ref{thm: bounds}, we insert the modulated Fourier expansion of the numerical solution into \eqref{opera-method} and obtain
	\begin{equation}\label{modulation functions}	
		\Psi^{-1}\Big(\frac{1}{h^2}L_1^2 L_2^{-2}(e^{hD})z^k\Big)
		=  \sum_{j \in \mathbb{Z}}\Big(\frac{\partial{ \mathcal{A}_j}}{\partial{ z^k}}(\bold{z})\Big)^{\ast}\frac{L_1 L_2^{-1}(e^{hD})}{h}z^j
		-\frac{L_1 L_2^{-1}(e^{hD})}{h} \mathcal{A}_k(\bold{z})
		-\Big(\frac{\partial{ \mathcal{U}}}{\partial{ z^k}}(\bold{z})\Big)^{\ast}+\mathcal{O}(\varepsilon^N),
	\end{equation}	
where we let
$z^k(t)=e^{ikt/  \varepsilon}\xi^{k}(t)$	and assume that $z^k(t)=0$ for $\abs{k} > N$.

	Multiplication \eqref{modulation functions} with $(\dot{z}^{k})^{\ast}$ and summation over $k$ gives	
	\begin{equation}\label{total derivatives}	\begin{aligned}
		&\sum_{k}	(\dot{z}^{k})^{\ast}\Psi^{-1}\frac{L_1^2 L_2^{-2}(e^{hD})}{h^2}z^k
		-\sum_{k}\Big( \frac{\mathrm{d}}{\mathrm{d}t}\mathcal{A}_k(\bold{z})^{\ast}\frac{L_1 L_2^{-1}(e^{hD})}{h}z^k
		-(\dot{z}^{k})^{\ast}\frac{L_1 L_2^{-1}(e^{hD})}{h}\mathcal{A}_k(\bold{z})\Big)
		\\&\ +\frac{\mathrm{d}}{\mathrm{d}t}\mathcal{U}(\bold{z})
		=\mathcal{O}(\varepsilon^N).\end{aligned}
	\end{equation}
The analysis provided in the proof of Theorem 3 in \cite{wangwu2020} indicates that each of the three terms
on the left-hand side is a total differential up to $\mathcal{O}(\varepsilon^N)$. Therefore, there exists a function $\mathcal{H}[\bold{z}](t)$ such that $\frac{\mathrm{d}}{\mathrm{d}t}\mathcal{H}[\bold{z}](t)=\mathcal{O}(\varepsilon^N).$ The first statement of the lemma is shown.
	
{ $\bullet$  \textbf{Proof of the second statement.}}	In what follows, we shall show the leading term of
	\begin{equation*}\label{}
		\mathcal{H}[\bold{z}](t)=	\mathcal{K}[\bold{z}](t)+	\mathcal{M}[\bold{z}](t)+	\mathcal{U}[\bold{z}](t),	
	\end{equation*}	
	where the time derivatives of $\mathcal{K}$, $\mathcal{M}$ and $\mathcal{U}$ equal the three corresponding terms on the left-hand side of \eqref{total derivatives}.	
	
	For $\mathcal{K}[\bold{z}]$, we can get
		\begin{equation*}
		\begin{aligned}
			&\frac{\mathrm{d}}{\mathrm{d}t}\mathcal{K}[\bold{z}]
			=\sum_{k}(\dot{z}^{k})^{\ast}\Psi^{-1}h^{-2}L_1^2 L_2^{-2}(e^{hD})z^k
			=\sum_{k}\Big(\dot{\xi}^{k}+\frac{ik}{\varepsilon}\xi^{k}\Big)^{\ast}\Psi^{-1}
			\Big(\sum_{l \geq 0}\varepsilon^{l-2} d_{l}^{k}\frac{\mathrm{d}^l}{{\mathrm{d}t}^l}\xi^{k}\Big)\\
			&\ =(\dot{\xi}^{0})^{\ast}\Psi^{-1}\ddot{\xi}^{0}
			+\sum_{k=\pm 1}\Big(\dot{\xi}^{k}+\frac{ik}{\varepsilon}\xi^{k}\Big)^{\ast}\Psi^{-1}
			\Big(\sum_{l \geq 0}\varepsilon^{l-2} d_{l}^{k}\frac{\mathrm{d}^l}{{\mathrm{d}t}^l}\xi^{k}\Big)+\mathcal{O}(h)\\
			&\ =(\dot{\xi}^{0})^{\ast}\Psi^{-1}\ddot{\xi}^{0}+\Big(\dot{\xi}^{1}+\frac{i}{\varepsilon}\xi^{1}\Big)^{\ast}\Psi^{-1}
			\Big(-\frac{4}{h^2}\tan^2\Big(\frac{h}{2\varepsilon}\Big)\xi^{1}+\frac{4i}{h}\tan\Big(\frac{h}{2\varepsilon}\Big)\sec^2\Big(\frac{h}{2\varepsilon}\Big)\dot{\xi}^{1}+\mathcal{O}(\varepsilon)\Big)\\
			&\quad\ +\Big(\dot{\xi}^{-1}-\frac{i}{\varepsilon}\xi^{-1}\Big)^{\ast}\Psi^{-1}
			\Big(-\frac{4}{h^2}\tan^2\Big(\frac{h}{2\varepsilon}\Big)\xi^{-1}-\frac{4i}{h}\tan\Big(\frac{h}{2\varepsilon}\Big)\sec^2\Big(\frac{h}{2\varepsilon}\Big)\dot{\xi}^{-1}+\mathcal{O}(\varepsilon)\Big)+\mathcal{O}(h)\\
			&\ =(\dot{\xi}^{0})^{\ast}\Psi^{-1}\ddot{\xi}^{0}- \frac{8}{h^2}\tan^{2}\Big(\frac{h}{2\varepsilon}\Big)(\dot{\xi}^{1})^{\ast}\Psi^{-1}\xi^{1}
			+\frac{8}{\varepsilon h}\tan\Big(\frac{h}{2\varepsilon}\Big) \sec^{2}\Big(\frac{h}{2\varepsilon}\Big)(\xi^{1})^{\ast}\Psi^{-1}\dot{\xi}^{1}
			+\mathcal{O}(h).
		\end{aligned}
	\end{equation*}
	This yields
	\begin{equation*}\label{}
		\mathcal{K}[\bold{z}]=\frac{1}{2}	(\dot{\xi}^{0})^{\ast}\Psi^{-1}\dot{\xi}^{0}- 4\Big(\frac{1}{h^2}\tan^{2}\Big(\frac{h}{2\varepsilon}\Big)-\frac{1}{\varepsilon h}\tan\Big(\frac{h}{2\varepsilon}\Big) \sec^{2}\Big(\frac{h}{2\varepsilon}\Big)\Big)(\xi^{1})^{\ast}\Psi^{-1}\xi^{1}
		+\mathcal{O}(h).	
	\end{equation*}	
Considering further the fact that $\Psi^{-1}
	=I+\big(1-\mathrm{tanc}(h/2\varepsilon)^{-1}\big){\tilde{B}_0}^2$,  one arrives at
	\begin{equation*}
		\begin{aligned}
			(\dot{\xi}^{0})^{\ast}\Psi^{-1}\dot{\xi}^{0}=(\dot{\xi}^{0})^{\ast}\dot{\xi}^{0}+\mathcal{O}(\varepsilon^2), \ \quad
			(\xi^{1})^{\ast}\Psi^{-1}\xi^{1}=
			\mathrm{tanc}(h/2\varepsilon)^{-1}(\xi_{1}^{1})^{\ast}\xi_{1}^{1}	+\mathcal{O}(\varepsilon^2),
		\end{aligned}
	\end{equation*}
where $\xi^{k}=\xi_{1}^{k}+\xi_{0}^{k}+\xi_{-1}^{k}$, the orthogonal property and the bounds of $\xi^k$ are used here. So, we obtain
	\begin{equation*}\label{}
		\mathcal{K}[\bold{z}]=\frac{1}{2}\abs{\dot{\xi}^{0}}^2- 2\Big(\frac{1}{h\varepsilon}\tan\Big(\frac{h}{2\varepsilon}\Big)-\frac{1}{\varepsilon^2} \sec^{2}\Big(\frac{h}{2\varepsilon}\Big)\Big)\abs{\xi_{1}^{1}}^2
		+\mathcal{O}(h).	
	\end{equation*}

	We now consider $\mathcal{M}[\bold{z}]$.  It is easy to see that the term $k=0$ is of size $\mathcal{O}(h^2)$,  while the dominating terms arise from  $k=\pm 1$. With some calculation, it is deduced that
\begin{equation*}
\begin{aligned}
&\sum_{k=\pm1}\Big( \frac{\mathrm{d}}{\mathrm{d}t}\mathcal{A}_k(\bold{z})^{\ast}h^{-1}L_1 L_2^{-1}(e^{hD})z^k-(\dot{z}^{k})^{\ast}h^{-1}L_1 L_2^{-1}(e^{hD})\mathcal{A}_k(\bold{z})\Big)\\
&=\sum_{k=\pm1}\varepsilon^{-1}c_{0}^{k}\Big(\frac{\mathrm{d}}{\mathrm{d}t}\big(A'(\xi^0)\xi^{k}\big)^{\ast}\xi^{k}
+(\dot{\xi}^{k})^{\ast}A'(\xi^0)\xi^{k}\Big)
+\mathcal{O}(\varepsilon).
		\end{aligned}
	\end{equation*}	
With the help of  $P_{\pm 1}(v \times B_0)=\pm i P_{\pm 1}v$, we then have $\mathcal{M}[\bold{z}]=-\frac{2}{\varepsilon h}\tan\big(\frac{h}{2\varepsilon}\big)\abs{\xi_{1}^{1}}^2
		+\mathcal{O}(h^2).$	
For $\mathcal{U}[\bold{z}]$, it is clear that $\mathcal{U}[\bold{z}]=U(\xi^{0})+\mathcal{O}(h)$.
	Therefore, $\mathcal{H}[\bold{z}]$ is determined by
	\begin{equation}\label{almost-H}
		\mathcal{H}[\bold{z}]=	\frac{1}{2}\abs{\dot{\xi}^{0}}^2+\frac{1}{\varepsilon^2}\sec^{2}\Big(\frac{h}{2\varepsilon}\Big)
		\abs{\xi_{1}^{1}}^2+U(\xi^{0})+\mathcal{O}(h).	
	\end{equation}
    
On the other hand, we now turn to consider the modulated Fourier expansion of $v_{n+1/2}$.  The combination of the  operator  $\frac{1}{h}L_1 L_2^{-1}(e^{hD})$, the definition of $\Phi$, and the bounds of the coefficient functions $\xi^k$ gives
	\begin{equation}\label{v12}
		\begin{aligned}
			v_{n+\frac{1}{2}}
			&=\Phi\sum_{\abs{k}  \leq N}e^{ikt/\varepsilon}\sum_{l \geq 0}\varepsilon^{l-1} c_{l}^{k}\frac{\mathrm{d}^l}{{\mathrm{d}t}^l}\xi^{k}(t)
			=\Phi\Big(\dot{\xi}^{0}+\frac{2i}{h}\tan\Big(\frac{h}{2\varepsilon}\Big)
			\big(\xi_{1}^{1}e^{it/\varepsilon}-\xi_{-1}^{-1}e^{-it/\varepsilon}\big)+\mathcal{O}(h^2)
			\Big)\\
			&=\dot{\xi}^{0}+\frac{i}{\varepsilon}\sec\Big(\frac{h}{2\varepsilon}\Big)
			\big(\xi_{1}^{1}e^{it/\varepsilon}-\xi_{-1}^{-1}e^{-it/\varepsilon}\big)
			+\mathcal{O}(h).\\
		\end{aligned}
	\end{equation}
It then follows that 
	\begin{equation}\label{eaxctH-MFE}
		\begin{aligned}
			H(x_{n+\frac{1}{2}}, v_{n+\frac{1}{2}})
			=\frac{1}{2}\abs{v_{n+\frac{1}{2}}}^2+U(x_{n+\frac{1}{2}})=
			\frac{1}{2}\abs{\dot{\xi}^{0}}^2+\frac{1}{\varepsilon^2}\sec^{2}\Big(\frac{h}{2\varepsilon}\Big)\abs{\xi_{1}^{1}}^2+U(\xi^{0})+\mathcal{O}(h).
		\end{aligned}	
	\end{equation}
Therefore, we have
	\begin{equation}\label{energyH}
 \mathcal{H}[\bold{z}](t_{n+\frac{1}{2}})=H(x_{n+\frac{1}{2}}, v_{n+\frac{1}{2}})+\mathcal{O}(h).
	\end{equation}
Note that the filter function $\Phi$ acting in the velocity component is chosen such that the second term in the right-hand side of \eqref{eaxctH-MFE} coincides with the corresponding second term of the almost-invariant $\mathcal H[\bold{z}]$ given in \eqref{almost-H}. This matching property induced by $\Phi$ is essential for establishing \eqref{energyH} and underlies the subsequent long-term analysis of the energy. Moreover, this specific form of $\Phi$ is crucial for the subsequent analysis of the long-time near-conservation of the magnetic moment.
\end{proof}	

We are now in a position to prove the long-time near-conservation of the energy. To this end, we decompose the time interval into subintervals of length $h$ and define $t_{n+1/2}=(n+1/2)h$. By repeatedly applying the relations of Lemma~\ref{lem1} on  intervals of length $h$, we consider the modulated Fourier expansions associated with different starting values. On each subinterval $[t_{n-1/2}, t_{n+1/2}]$, Theorem~\ref{thm: bounds} provides modulated Fourier expansion coefficients $\bold{z}_{n+1/2}(t)$ corresponding to starting values $(x_{n+1/2},v_{n+1/2})$. The uniqueness of the modulation system
up to $\mathcal{O}(\varepsilon^{N+1})$ implies that $\bold{z}_{n-1/2}(h)=\bold{z}_{n+1/2}(0)+\mathcal{O}(\varepsilon^{N+1})$. The first statement of Lemma \ref{lem1} then gives
$ 
\mathcal{H}[\bold{z}_{n+1/2}](0)=\mathcal{H}[\bold{z}_{n-1/2}](h)+\mathcal{O}(\varepsilon^{N+1} )=\mathcal{H}[\bold{z}_{n-1/2}](0)+\mathcal{O}(h\varepsilon^N).	$
Iterating this relation yields
\begin{equation*}
	\begin{aligned}
		\mathcal{H}[\bold{z}_{n+1/2}](0)=\mathcal{H}[\bold{z}_{1/2}](0)+\mathcal{O}(nh\varepsilon^N).		
	\end{aligned}
\end{equation*}
Moreover, by the second statement of Lemma~\ref{lem1}, the modulated coefficients associated with the starting values
$(x_{n+1/2},v_{n+1/2})$ and $(x_{1/2},v_{1/2})$ satisfy
$\mathcal{H}[\bold{z}_{n+1/2}](0)=H(x_{n+1/2}, v_{n+1/2}) +\mathcal{O}(h)$
and $\mathcal{H}[\bold{z}_{1/2}](0)=H(x_{1/2}, v_{1/2}) +\mathcal{O}(h).$ 	 Combining the above estimates, we obtain	 		
\begin{equation*}
	\begin{aligned}
H(x_{n+1/2}, v_{n+1/2})-H(x_{1/2}, v_{1/2})=\mathcal{H}[\bold{z}_{n+1/2}](0)-\mathcal{H}[\bold{z}_{1/2}](0) +\mathcal{O}(h)=\mathcal{O}(h)+\mathcal{O}(nh\varepsilon^N).	 		
	\end{aligned}
\end{equation*} 
This completes the proof of the long-time near-conservation of energy stated in Theorem~\ref{conservation-H2} for the regime $h^2 >C^{*}\varepsilon$.

We next consider the remaining step-size regime $h^2 \leq C^{*}\varepsilon$,  which is divided into two subregimes.
For $c^{*}\varepsilon^2 \leq h^2 \leq C^{*}\varepsilon$, the modulated Fourier expansion yields long-time near-conservation of the energy with an $\mathcal{O}(\varepsilon)$ bound, using the same analytical framework as in the regime $h^2 > C^{*}\varepsilon$. For sufficiently small step sizes $h^2 < c^{*}\varepsilon^2$, two analytical approaches are considered. 
The modulated Fourier expansion yields long-time near-conservation  of the energy with an $\mathcal{O}(\varepsilon)$ bound,  while backward error analysis, carried out as in the moderate magnetic field case, 
gives an estimate of order $\mathcal{O}(h^2/\varepsilon^2)$. 
For brevity, the details are omitted. Combining these two results leads to an energy error of order $\mathcal{O}(\min\{h^2/\varepsilon^2,\varepsilon\})$, 
which is valid throughout the regime $h^2 \leq C^{*}\varepsilon$.

\subsubsection{ Long-time  near-conservation of magnetic moment (Proof of Theorem \ref{conservation-M2})} \label{sec5.3}
Similar to the proof of Theorem~\ref{conservation-H2}, an almost-invariant associated with the magnetic moment is first established in the following lemma, which is then used to prove its long-time near-conservation for the filtered variational integrator.
\begin{lem}
	Under the conditions of Theorem \ref{thm: bounds}, there exists an almost-invariant  $\mathcal{I}[\bold{z}](t)$, such that for $0\leq t \leq T$
    \begin{equation*}
\mathcal{I}[\bold{z}](t)=\mathcal{I}[\bold{z}](0)+\mathcal{O}(t\varepsilon^N),\ \
\mathcal{I}[\bold{z}](t_{n+\frac{1}{2}})=I(x_{n+\frac{1}{2}}, v_{n+\frac{1}{2}}) +\mathcal{O}(h) \ \ \textmd{for} \ \ 0\leq(n+1/2)h \leq T,
	\end{equation*}		
where the constants symbolised by $\mathcal{O}$-notation depend on $N$, but are independent of $n$, $h$ and $\varepsilon$.
\end{lem}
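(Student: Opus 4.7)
The plan is to mirror the derivation of $\mathcal H[\bold z]$ in Lemma~\ref{lem1}, but to test the modulation equation \eqref{modulation functions} against the \emph{symmetry-group} multiplier $(ikz^k)^{*}$ instead of the \emph{time-translation} multiplier $(\dot z^k)^{*}$. This is the modulated-Fourier analogue of Noether's theorem: the action $S(\lambda)\bold z=(e^{ik\lambda}z^k)_{k\in\mathbb Z}$ leaves the extended potentials invariant, $\mathcal U(S(\lambda)\bold z)=\mathcal U(\bold z)$ and $\mathcal A(S(\lambda)\bold z)=S(\lambda)\mathcal A(\bold z)$, and the associated conserved quantity is the sought almost-invariant $\mathcal I[\bold z]$ tied to the magnetic moment.

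First I would differentiate these invariance/equivariance identities at $\lambda=0$ to obtain $\sum_k(ikz^k)^{*}(\partial\mathcal U/\partial z^k)^{*}=0$ together with a matching relation for $\mathcal A$. Multiplying \eqref{modulation functions} by $(ikz^k)^{*}$ and summing over $k$ then annihilates the $\mathcal U$-contribution, while the two $\mathcal A$-terms combine into an expression of the shape $\langle ij\,\mathcal A_j(\bold z),\,h^{-1}L_1L_2^{-1}(e^{hD})z^j\rangle-\langle (ikz^k)^{*},\,h^{-1}L_1L_2^{-1}(e^{hD})\mathcal A_k(\bold z)\rangle$, which is a total time derivative modulo $\mathcal O(\varepsilon^N)$ by the same $\langle f(hD)u,v\rangle-\langle u,f(-hD)v\rangle$ identity already used in Section~\ref{sec4.3}. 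The kinetic contribution $\sum_k(ikz^k)^{*}\Psi^{-1}h^{-2}L_1^{2}L_2^{-2}(e^{hD})z^k$ is likewise a total derivative up to $\mathcal O(\varepsilon^N)$, because $\Psi^{-1}$ is self-adjoint and commutes with the projections $P_{\pm1}$ that carry the leading $k=\pm1$ contributions. Collecting the resulting antiderivatives defines $\mathcal I[\bold z](t)$ with $\mathrm d\mathcal I[\bold z]/\mathrm dt=\mathcal O(\varepsilon^N)$, and integration over $[0,t]$ gives the first assertion of the lemma.

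For the second assertion I would extract only the $\mathcal O(1)$ leading part of $\mathcal I[\bold z]$. The $k=0$ mode is killed by the $ik$-multiplier, and by the bounds $\xi_j^k=\mathcal O(\varepsilon^{|k|+1})$ from Theorem~\ref{thm: bounds} only $\xi_{\pm1}^{\pm1}=\mathcal O(\varepsilon)$ survives at this order. Using $P_{\pm1}\Psi^{-1}=\mathrm{tanc}(h/(2\varepsilon))^{-1}P_{\pm1}$ together with the leading values $d_0^{\pm1}=-(4/\eta^2)\tan^2(h/(2\varepsilon))$ and $c_0^{\pm1}=(2i/\eta)\tan(h/(2\varepsilon))$ recalled in the proof of Theorem~\ref{thm: bounds}, a direct computation parallel to the evaluation of $\mathcal K+\mathcal M$ in Lemma~\ref{lem1} (the cross term with $\mathcal A_{\pm1}$ producing the Zeeman-type cancellation $P_{\pm1}(\xi^{\pm1}\times B_0/\varepsilon)=\pm i\varepsilon^{-1}\xi_{\pm1}^{\pm1}$) yields
\[
\mathcal I[\bold z]=\frac{1}{\varepsilon^{2}}\sec^{2}\!\Bigl(\frac{h}{2\varepsilon}\Bigr)\bigl|\xi_1^{1}\bigr|^{2}+\mathcal O(h).
\]
On the other hand, from $|B_0|=1$ and $B(x)=B_0/\varepsilon+B_1(x)$ one has $I(x,v)=\tfrac12|v_\perp|^{2}+\mathcal O(\varepsilon)$, and inserting the velocity expansion \eqref{v12} gives $|v_\perp|^{2}=2\varepsilon^{-2}\sec^{2}(h/(2\varepsilon))|\xi_1^{1}|^{2}+\mathcal O(h)$. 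The two expressions coincide up to $\mathcal O(h)$, proving the second assertion. This matching is precisely what motivated the specific choice of $\Phi$ in \eqref{MVF-phi}: with any other filter the $\sec^{2}$-factor in $|v_\perp|^{2}$ would not line up with that in $\mathcal I[\bold z]$ and only an $\mathcal O(1)$ consistency would survive.

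The main technical obstacle I foresee is not the Noether-type manipulation, which runs formally parallel to the energy case, but the bookkeeping of the trigonometric coefficients: several partial contributions of sizes $1/(\varepsilon h)$ and $1/\varepsilon^{2}$ must combine so that only the single $\varepsilon^{-2}\sec^{2}(h/(2\varepsilon))$ term remains, and the cross term between $A'(\xi^0)\xi^{\pm1}$ and the $\varepsilon^{-1}B_0$-part of the magnetic field must be isolated cleanly from the $B_1$-part, which only contributes at $\mathcal O(h)$. Once the leading coefficient is pinned down, the long-time near-conservation stated in Theorem~\ref{conservation-M2} follows by exactly the patching argument used at the end of Section~\ref{sec5.2}: uniqueness of the modulation system up to $\mathcal O(\varepsilon^{N+1})$ yields $\mathcal I[\bold z_{n+1/2}](0)=\mathcal I[\bold z_{n-1/2}](0)+\mathcal O(h\varepsilon^{N})$ on each subinterval, and telescoping combined with the second assertion of the lemma produces the $\mathcal O(h)$ bound for $h^{2}>C^{*}\varepsilon$, while the sharper $\mathcal O(\varepsilon)$ bound in the regime $h^{2}\le C^{*}\varepsilon$ results from $|\xi_1^{1}|^{2}=\mathcal O(\varepsilon^{2})$ so that the leading term in $\mathcal I[\bold z]$ is itself of size $\mathcal O(\varepsilon)$.
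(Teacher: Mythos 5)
Your proposal follows essentially the same route as the paper: the same Noether-type multiplication of the modulation equation by the symmetry multiplier (the paper uses $-ik(z^{k})^{*}/\varepsilon$, i.e.\ your multiplier up to the normalizing factor $-1/\varepsilon$ needed to make the invariant $\mathcal{O}(1)$), the same use of the invariance identities for $\mathcal{U}$ and $\mathcal{A}$ to obtain a total time derivative up to $\mathcal{O}(\varepsilon^{N})$, and the same extraction of the leading term $\varepsilon^{-2}\sec^{2}(h/(2\varepsilon))\,\abs{\xi_{1}^{1}}^{2}$ compared against $I(x_{n+1/2},v_{n+1/2})=\tfrac12\abs{v_{n+1/2}\times B_{0}}^{2}+\mathcal{O}(\varepsilon)$ via the velocity expansion. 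The argument is correct and matches the paper's proof in all essentials.
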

\begin{proof}
{ $\bullet$  \textbf{Proof of the first statement.}}
The identities
	\begin{equation*}\label{}
		\begin{aligned}
			\sum_{k \in \mathbb{Z}}ik\frac{\partial{ \mathcal{U}}}{\partial{ z^k}}(\bold{z})z^k=0,\ \quad
				\sum_{j \in \mathbb{Z}}i j \frac{\partial{ \mathcal{A}_k}}{\partial{ z^j}}(\bold{z})z^j=i k\mathcal{A}_k(\bold{z}) \quad \textmd{for} \ \ k \in \mathbb{Z}	
		\end{aligned}	
	\end{equation*}
follow from the invariance properties of  $\mathcal{U}(\bold{z})$ and $\mathcal{A}(\bold{z})$, as discussed in \cite{Hairer2018}. Based on these formulae, multiplying  \eqref{modulation functions} with $-ik(z^{k})^{\ast}/\varepsilon$ and summing over $k$,  it follows that
	\begin{equation}\label{I}			\begin{aligned}
&\varepsilon^{-1}\sum_{k}ik\big(\mathcal{A}_k(\bold{z})^{\ast}h^{-1}L_1 L_2^{-1}(e^{hD})z^k-(z^{k})^{\ast}h^{-1}L_1 L_2^{-1}(e^{hD})\mathcal{A}_k(\bold{z})\big)\\
&-\varepsilon^{-1}\sum_{k}ik(z^{k})^{\ast}\Psi^{-1}\big(h^{-2}L_1^2 L_2^{-2}(e^{hD})z^k\big)=  \mathcal{O}(\varepsilon^{N}).		\end{aligned}
	\end{equation}
	Similar to the  analysis in Lemma \ref{lem1}, it can be seen that the real part of this left-hand side is a total time derivative. Thus, there exists a function $\mathcal{I}[\bold{z}](t)$ such that $\frac{\mathrm{d}}{\mathrm{d}t}\mathcal{I}[\bold{z}](t)=\mathcal{O}(\varepsilon^{N})$.
The first statement of the lemma is shown immediately from this result.

{ $\bullet$  \textbf{Proof of the second statement.}}	Considering the dominant terms of $\mathcal{I}[\bold{z}]$ as follows.  For $k=0$, we find that the left-hand side of \eqref{I} is zero and it is easy to check that the first sum is of size $\mathcal{O}(\varepsilon)$ for $k=\pm 1$. Therefore, we turn to the second term for  $k=\pm 1$ to find the dominant part of $\mathcal{I}[\bold{z}]$. From the following  “magic formula" (see Chap. \uppercase\expandafter{\romannumeral13} of  \cite{hairer2006})
\begin{equation*}
\Im \overline{z}^{\intercal}z^{(2l+2)}=\Im\frac{\mathrm{d}}{\mathrm{d}t}\big(\overline{z}^{\intercal}z^{(2l+1)}-\dot{\overline{z}}^{\intercal}z^{(2l)}+\cdots \pm (\overline{z}^{(l)})^{\intercal}z^{(l+1)} \big),
\end{equation*}
one arrives at
\begin{equation*}
\begin{aligned}
&\Im \big((z^{k})^{\ast}\Psi^{-1}h^{-2}L_1^2 L_2^{-2}(e^{hD})z^k\big)=\sum_{l \geq 0}\beta_{2l}h^{2l}\Im\big((z^{k})^{\ast}\Psi^{-1}(z^k)^{2l+2}\big)\\
&\ =\sum_{l \geq 0}\beta_{2l}h^{2l}\Im\frac{\mathrm{d}}{\mathrm{d}t}\big((z^{k})^{\ast}\Psi^{-1}(z^{k})^{(2l+1)}-(\dot{z}^{k})^{\ast}\Psi^{-1}(z^{k})^{(2l)}
			+\cdots \pm \big((z^{k})^{(l)}\big)^{\ast}\Psi^{-1}(z^{k})^{(l+1)}\big)\\
 &\ =\frac{\mathrm{d}}{\mathrm{d}t}\sum_{l \geq 0}\beta_{2l}h^{2l}\Im\big((z^{k})^{\ast}\Psi^{-1}(z^{k})^{(2l+1)}-(\dot{z}^{k})^{\ast}\Psi^{-1}(z^{k})^{(2l)}
			+\cdots \pm \big((z^{k})^{(l)}\big)^{\ast}\Psi^{-1}(z^{k})^{(l+1)}\big).
\end{aligned}
\end{equation*}
Thanks to the Lemma 5.1 in \cite{Hairer2016}, we have
	\begin{equation*}
		\frac{1}{m!}\frac{\mathrm{d}^m}{\mathrm{d}t^m}{z}^{k}(t)=\frac{1}{m!}\xi^{k}(t)\Big(\frac{ik}{\varepsilon}\Big)^{m}e^{ikt/\varepsilon}
		+\mathcal{O}\Big(\frac{1}{(m/M)!}\Big(\frac{c}{\varepsilon}\Big)^{m-1-\abs{k}}\Big),
	\end{equation*}	
	where $c$ and the notation $\mathcal{O}$ are independent of $m \geq 1$ and $\varepsilon$. Combine this formula with the term
	$(-1)^r \frac{\mathrm{d}^r}{\mathrm{d}t^r}\big({\xi}^{k}(t)\big)^{\ast}\Psi^{-1}\frac{\mathrm{d}^s}{\mathrm{d}t^s}{\xi}^{k}(t)$, we find that the dominant term is  same for $r+s=2l+1.$ Then, it can be deduced that
	\begin{equation*}
		\begin{aligned}
			&(z^{k})^{\ast}\Psi^{-1}(z^{k})^{(2l+1)}-(\dot{z}^{k})^{\ast}\Psi^{-1}(z^{k})^{(2l)}
			+\cdots \pm \big((z^{k})^{(l)}\big)^{\ast}\Psi^{-1}(z^{k})^{(l+1)}\\
		   &= (l+1)\big(\frac{ik}{\varepsilon}\big)^{2l+1}(z^{k})^{\ast}\Psi^{-1}z^{k}	+\mathcal{O}\Big(\frac{1}{(l/M)!}\Big(\frac{c}{\varepsilon}\Big)^{2l-2\abs{k}}\Big),
		\end{aligned} 
	\end{equation*}
which further implies that the total derivative of the second  term on the left-hand side of \eqref{I} is given by
	\begin{equation*}
		\begin{aligned}
			&-\frac{i}{\varepsilon}\sum_{k}\frac{ik}{h} \sum_{l \geq 0}\Big((-1)^l\beta_{2l}(l+1) \Big(\frac{kh}{\varepsilon}\Big)^{2l+1}({\xi}^{k})^{\ast}\Psi^{-1}{\xi}^{k}\Big)
			+\mathcal{O}(\varepsilon)\\
			&\ =\frac{1}{\varepsilon h}\sum_{k}\frac{k}{2} \sum_{l \geq 0}\Big((-1)^l\beta_{2l}(2l+2) \Big(\frac{kh}{\varepsilon}\Big)^{2l+1}({\xi}^{k})^{\ast}\Psi^{-1}{\xi}^{k}\Big)
			+\mathcal{O}(\varepsilon)\\	
			&\ =\frac{1}{\varepsilon h}\sum_{k}\frac{k}{2} 2\tan\Big(\frac{kh}{2\varepsilon}\Big) \sec^{2}\Big(\frac{kh}{2\varepsilon}\Big)({\xi}^{k})^{\ast}\Psi^{-1}{\xi}^{k}
			+\mathcal{O}(\varepsilon)\\
			&\ =\frac{2}{\varepsilon h}\tan\Big(\frac{h}{2\varepsilon}\Big) \sec^{2}\Big(\frac{h}{2\varepsilon}\Big)({\xi}^{1})^{\ast}\Psi^{-1}{\xi}^{1}
			+\mathcal{O}(\varepsilon)\\
			 &\ =\frac{2}{\varepsilon h}\tan\Big(\frac{h}{2\varepsilon}\Big) \sec^{2}\Big(\frac{h}{2\varepsilon}\Big)\Big(\mathrm{tanc}\Big(\frac{h}{2 \varepsilon}\Big)^{-1}\abs{\xi_{1}^{1}}^{2}	+\mathcal{O}((\varepsilon^2)\Big)
			 +\mathcal{O}(\varepsilon)\\	
			&\ =\frac{1}{\varepsilon^2}\sec^2\Big(\frac{h}{2\varepsilon}\Big)	\abs{\xi_{1}^{1}}^2+\mathcal{O}(h).
		\end{aligned}
	\end{equation*}

	On the other hand, by combining the expansion of $v_{n+1/2}$ in \eqref{v12} with the relations $\dot{\xi}^{0} \times B_0=\mathcal{O}(\varepsilon)$ and $P_{\pm 1}(v \times B_0)=\pm i P_{\pm 1}v$, we obtain $v_{n+1/2} \times B_0
=-\varepsilon^{-1}\sec(h/2\varepsilon)\big(\xi_{1}^{1}e^{it/\varepsilon}+\xi_{-1}^{-1}e^{-it/\varepsilon}\big)+\mathcal{O}(h).$
The following relation then holds
	\begin{equation*}
		\begin{aligned}
			&I(x_{n+1/2}, v_{n+1/2} )
			=\frac{1}{2}\abs{v_{n+1/2} \times B_0}^2+\mathcal{O}(\varepsilon)\\
			&\ =\frac{1}{2\varepsilon^2}\sec^2\Big(\frac{h}{2\varepsilon}\Big)\big((\xi_{1}^{1})^{\ast}\xi_{1}^{1}
			+(\xi_{1}^{1})^{\ast}\xi_{-1}^{-1}e^{-2it/\varepsilon}+(\xi_{-1}^{-1})^{\ast}\xi_{1}^{1}e^{2it/\varepsilon} 
			+(\xi_{-1}^{-1})^{\ast}\xi_{-1}^{-1}\big)+\mathcal{O}(h)\\
			&\ =\frac{1}{\varepsilon^2}\sec^2\Big(\frac{h}{2\varepsilon}\Big)	\abs{\xi_{1}^{1}}^2+\mathcal{O}(h).
		\end{aligned}
	\end{equation*}
Therefore, it follows that $\mathcal{I}[\bold{z}](t_{n+1/2})=I(x_{n+1/2}, v_{n+1/2})+\mathcal{O}(h).$ 
\end{proof}

By combining the above results with the detailed analysis carried out in Theorem \ref{conservation-H2}, 
we are able to establish the desired long-time near-conservation  for the magnetic moment.  This concludes the proof of Theorem~\ref{conservation-M2}.

\section{Conclusions} \label{sec6}	
In this paper, we study charged-particle dynamics (CPD) in both the moderate magnetic field regime and the near-uniform strong magnetic field regime, and propose a novel filtered two-step variational integrator together with its theoretical analysis. For the moderate regime $\varepsilon = 1$, the proposed method is shown to achieve second-order accuracy, and its long-time near-conservation of energy and momentum is established by backward error analysis. For the strong magnetic field regime $0 < \varepsilon \ll 1$,  the analysis is carried out within the framework of modulated Fourier expansion.  This yields an $\mathcal{O}(h^2)$ error bound for $h^2 \ge C^{*}\varepsilon$ and an $\mathcal{O}(\varepsilon)$ accuracy for $c^{*}\varepsilon^2 \le h^2 \le C^{*}\varepsilon$.  The long-time near-conservation of the energy and the magnetic moment is also established by modulated Fourier expansion in these step-size regimes.  For the smaller-step-size regime $h^2 < c^{*}\varepsilon^2$, the long-time behavior is further analyzed by combining modulated Fourier expansion with backward error analysis.

Finally, it is worth noting that applying a large-stepsize method to CPD in a strong non-uniform magnetic field is an interesting but very challenging topic. The work \cite{Xiao21} succeeded in proposing a large-stepsize modified Boris method for solving CPD in a strong non-uniform magnetic field, but without error analysis. The authors  gave a rigorous error analysis by  means of modulated Fourier expansion in a  recent paper (see \cite{Lubich-Shi2023}). We hope to make some progress on the formulation and analysis of a new class of large-stepsize methods for CPD in a strong non-uniform magnetic field in our future work.

\section*{Acknowledgement}
We are sincerely grateful to the two anonymous reviewers
for their valuable comments and helpful suggestions.
This work was supported by NSFC (12371403).

\vspace{3ex}

\clearpage

\end{document}